\numberwithin{equation}{section}
\theoremstyle{plain}
\newtheorem{theorem}[equation]{Theorem}
\newtheorem{proposition}[equation]{Proposition}
\newtheorem{lemma}[equation]{Lemma} 
\newtheorem{corollary}[equation]{Corollary}
\theoremstyle{definition}
\newtheorem{definition}[equation]{Definition}
\newtheorem{chunk}[equation]{}
\theoremstyle{remark}
\newtheorem{remark}[equation]{Remark}
\newcommand{\acat}{\operatorname{C}_{\mathcal{O}}}
\newcommand{\cat}[1]{\mathsf{#1}}
\newcommand{\cmod}[1]{\Psi_{#1}}
\newcommand{\coker}{\operatorname{coker}}
\newcommand{\con}[1]{\Phi_{#1}}
\newcommand{\depth}{\operatorname{depth}}
\newcommand{\ecoh}{\operatorname{F}}
\newcommand{\eps}{\varepsilon}
\newcommand{\ext}{\operatorname{Ext}}
\newcommand{\height}{\operatorname{height}}
\newcommand{\Hom}{\operatorname{Hom}}
\newcommand{\idmap}{\operatorname{id}}
\newcommand{\Ker}{\operatorname{Ker}}
\newcommand{\length}{\operatorname{length}}
\newcommand{\ord}{\operatorname{ord}}
\newcommand{\pos}[1]{[\![{#1}]\!]}
\newcommand{\rank}{\operatorname{rank}}
\newcommand{\tors}{\operatorname{tors}}
\newcommand{\tfree}[1]{{#1}^{\operatorname{tf}}}
\newcommand{\vf}{\varphi}
\newcommand{\cL}{\mathcal{L}}
\newcommand{\tcL}{\tilde{\mathcal{L}}}
\newcommand{\full}{\mathrm{full}}
\newcommand{\Sel}{\mathrm{Sel}}
\newcommand{\mco}{\mathcal{O}}
\newcommand{\CF}{\mathcal{F}}
\newcommand{\bs}{\boldsymbol}
\newcommand{\fm}{\mathfrak{m}} 
\newcommand{\fp}{\mathfrak{p}}
\newcommand{\bbQ}{\mathbb{Q}}
\newcommand{\bbZ}{\mathbb{Z}}
\newcommand{\bbT}{\mathbb{T}}
\newcommand{\bbA}{\mathbb{A}}
\newcommand{\ad}{\operatorname{ad}}
\newcommand{\GL}{\operatorname{GL}}
\newcommand{\Gal}{\operatorname{Gal}}
\DeclareFontFamily{U}{wncy}{}
    \DeclareFontShape{U}{wncy}{m}{n}{<->wncyr10}{}
    \DeclareSymbolFont{mcy}{U}{wncy}{m}{n}
    \DeclareMathSymbol{\Sha}{\mathord}{mcy}{"58} 
\begin{document}

\title[Congruence modules and Galois cohomology]{Congruence modules in higher codimension and zeta lines in Galois cohomology}

\author[S.~B.~Iyengar]{Srikanth B.~Iyengar}
\address{Department of Mathematics,
University of Utah, Salt Lake City, UT 84112, U.S.A.}
\email{iyengar@math.utah.edu}

\author[C.~B.~Khare]{Chandrashekhar  B. Khare}
\address{Department of Mathematics,
University of California, Los Angeles, CA 90095, U.S.A.}
\email{shekhar@math.ucla.edu}

\author[J.~Manning]{Jeffrey Manning}
\address{Mathematics Department,
Imperial College, London, SW7 2RH, UK}
\email{jeffamanning@gmail.com}

\author[E.~Urban]{Eric Urban}
\address{Mathematics Department,
Columbia University, New York, U. S. A.}
\email{urban@math.columbia.edu}

\date{\today}

\keywords{congruence module, Hecke module, modularity lifting, Wiles defect}
\subjclass[2020]{ 11F80 (primary); 13C10, 13D02  (secondary)}

\begin{abstract} 
This work builds on that in \cite{Iyengar/Khare/Manning:2022a} where a notion of congruence modules  in higher codimension is introduced. The main new results are a criterion for detecting regularity of local rings in terms of congruence modules, and a more refined version of a result tracking the change of congruence modules under deformation is proved. Number theoretic applications include the construction of  canonical lines in certain  Galois cohomology groups arising from adjoint motives of Hilbert modular forms. 
\end{abstract}

\maketitle

\section{Introduction}
\label{se:intro}
Let $p$ be a prime number, $\mco$  the ring of integers of a finite extension of $\bbQ_p$, and $R$ a  complete noetherian local $\mco$-algebras with an $\mco$-algebra morphism $\lambda\colon R \to \mco$ such that the local ring $R_\fp$, where $\fp\colonequals\Ker(\lambda)$, is regular.

 In \cite{Iyengar/Khare/Manning:2022a} we develop an analog of the Wiles-Lenstra-Diamond  numerical  criterion in arbitrary  codimension, with the original criterion  (see \cite{Diamond:1997,Wiles:1995}) corresponding to the codimension 0 case. This gives a criterion for  finitely generated $R$-module $M$ to have a free direct summand and for $R$ to be a complete intersection ring. This involves two invariants associated to  $\lambda$: the torsion $\Phi_\lambda(R)$  of the cotangent space of $\lambda$,  and the congruence module $\Psi_\lambda(M)$. The codimension  $c$ of the augmentation $\lambda$ is the height of $\fp$, or equivalently under our regularity assumption, the dimension of $R_\fp$.  We denote  the category of such pairs $(R,\lambda)$ by $\acat(c)$.  The earlier work of \cite{Diamond:1997} corresponds to the case $c=0$ in which case $\fp/\fp^2=\Phi_\lambda(R)$.
 
Wiles uses the criterion in his work on the modularity of elliptic curves over $\bbQ$ to go from modularity lifting theorems in the minimal case to those in the non-minimal case. In  \cite{Iyengar/Khare/Manning:2022a} the numerical criterion in higher codimension is used along the same lines to prove integral modularity lifting results  for non-minimal lifts  in situations of positive defect which arise in considering Galois representations  over imaginary quadratic fields. 

 In this work we explore the meaning of   the invariants $\Phi_\lambda(R)$  and $\Psi_\lambda(M)$ in  certain number theoretic situations, relating them to the index of zeta elements in global Galois cohomology groups.  
 
 To begin with we  focus  on one of the key ideas of \cite{Iyengar/Khare/Manning:2022a}, namely the definition and properties of  congruence modules, and congruence ideals, associated to an augmentation $\lambda$ in arbitrary codimension. This is the content of Section~\ref{se:ca-section}. The highlights are a characterisation, Theorem \ref{th:regular}, of regularity of rings  $(A,\lambda)$ in our category $\acat(c)$ in terms of vanishing of the invariants $\Phi_\lambda(A)$ and $\Psi_\lambda(A)$); a more transparent description, in \S \ref{sec:pairing}, of the connection between duality and our congruence modules than in the earlier paper, and a refinement,  Theorem \ref{th:deformation-with-lambda}, of a result about deformation invariance of Wiles defect.
  
 Section~\ref{se:cotorsion} focuses on number theoretic applications, and concerns the congruence ideal attached to an augmentation of Hida’s ordinary Hecke algebra $\bbT^{\rm ord}$, or ordinary deformation ring $R^{\rm ord}$,  arising from  cohomological  Hilbert modular forms  $f$ over totally real fields $F$. The functorial properties of the congruence ideal are used to relate it to lines (that is to say, free $\mco$-modules of rank one) in the Galois cohomology  with coefficients in $\ad \rho_f$; see~Theorem \ref{th:zeta}. The  index of their image under global-to-local restriction  maps to singular local Galois cohomology $H^1_{{\rm ord}/f}(G_p,\ad \rho_f)$ is related to classical congruence module of $f$, and to special values of adjoint $L$-function 
 \[
 L^{\rm alg}(1,\ad \rho_f)\,,
 \] 
 by work of Hida \cite{Hida:1981} and its generalization by Dimitrov \cite{Dimitrov:2009}. This is connected to  the ``zeta elements’’  of \cite[Theorem 1.1]{Urban:2021}. The terminology is due to Kato~\cite{Kato:2004}, who used it for  elements of Galois cohomology  he constructs in a related context, arising from Beilinson-Flach classes.  In \cite{Urban:2021} exact sequences of Selmer groups are used while here we use in addition our congruence modules in higher codimension, allowing one to eliminate some hypotheses which arise from relying  on $R=\bbT$  theorems. 

 This connection arises  from the following circumstance.  Let $\bs{t}\colonequals t_1,\dots,t_c$ be indeterminates, $\Lambda_c\colonequals \mco\pos{\bs{t}}$ the power-series ring, and $\Lambda_c\to \mco$ the natural augmentation. Fix $(A,\lambda)$ in $\acat(c)$ equipped with a finite flat map $\iota\colon\Lambda_c\to A$ of $\mco$-algebras over $\mco$, so that composite map 
\[
\Lambda_c\xrightarrow{\ \iota\ } A\xrightarrow{\ \lambda\ }\mco\,,
\]
is the augmentation. For $A_0=A/(\bs{t})$ one has a map
\[
\ext^c_{A}(\mco,A)(=A_0[\ker \lambda])  \to \bigwedge^c_\mco \Hom(\fp/\fp^2,\mco)\,,
\]
whose cokernel is  $\cmod{\lambda}(A)$. Abstractly both domain and range are simply $\mco$’s.  In applications when $A$ is a nearly ordinary  deformation ring and $\lambda$ arises from the classical form $f$, the range  is a Selmer group  as  $\Hom(\fp/\fp^2,\mco)=H^1_{\cL}(G_{F,S},\ad \rho_f)$, with local conditions  $\cL=(\cL_v)$ such that $\cL_v \subset H^1(G_v,\ad \rho_f)$ for $v \in S$  and $\cL_v$ is the unramified subspace  $H^1_{\rm unr}(G_v,\ad \rho_f)$   for $v$ not in $S$.     When $A$ is a nearly ordinary Hecke algebra $  \Hom(\fp/\fp^2,\mco)$ is a subspace of $H^1(G_{F,S},\ad \rho_f)$. This gives a ``pure thought’’  construction of  canonical  lines in Galois cohomology; see Theorem~\ref{th:zeta}.

 \section{Higher congruence modules and Wiles defects}
 \label{se:ca-section}
We being by recalling the setup of \cite{Iyengar/Khare/Manning:2022a}. This section complements the material presented in \cite[Part~1]{Iyengar/Khare/Manning:2022a}, where the commutative algebraic  aspects of the theory of congruence modules is developed. There are some new results, the main ones being Theorems~\ref{th:regular} and \ref{th:deformation-with-lambda}, and  Proposition \ref{pr:iso-criteria}. Along the way we provide also a different perspective and new proofs of some key results from \cite{Iyengar/Khare/Manning:2022a}.

\begin{chunk}
Let $\mco$ be a complete discrete valuation ring, with valuation $\ord(-)$ and uniformizer $\varpi$. Throughout we fix a complete local $\mco$-algebra $A$ and a finitely generated $A$-module $M$. Given a map $\lambda\colon A\to \mco$ of $\mco$-algebras, set
\[
\fp_\lambda \colonequals \Ker\lambda\quad\text{and}\quad c\colonequals \height{\fp_{\lambda}}\,.
\]
For any finitely generated $A$-module $M$, set
	\[
	\ecoh^i_{\lambda}(M)\colonequals \tfree{\ext^i_A(\mco,M)}
	\]
	the torsion-free quotient of the $\mco$-module  $\ext^i_A(\mco,M)$. Here $\mco$ is viewed as an $A$-module via $\lambda$. The \emph{congruence module} of $M$ at $\lambda$ is the $\mco$-module
	\[
	\cmod{\lambda}(M)\colonequals \coker\left(\ecoh^c_{\lambda}(M) \xrightarrow{\ \ecoh^c_{\lambda}(\lambda\otimes M)\ }\ecoh^c_{\lambda}(M/\fp_\lambda M)\right)\,.
	\]
We have also to consider $\mco$-module
	\[
	\con{\lambda}(A)\colonequals \tors(\fp_\lambda/\fp_\lambda^2)\,,
	\]
	namely, the torsion part of the cotangent module $\fp_\lambda/\fp_\lambda^2$ of $\lambda$.
\end{chunk}

We say an $A$-module $M$ has a certain property \emph{at $\lambda$} if the $A_{\fp_\lambda}$-module $M_{\fp_\lambda}$ has the stated property. For instance we say $A$ is regular at $\lambda$ to mean that the local ring $A_{\fp_\lambda}$ is regular. The starting point of our work is the following result; see
	\cite[Theorem 2.5 and Lemma~2.6]{Iyengar/Khare/Manning:2022a}.

\begin{theorem}
	\label{th:acat}
	With  $\lambda\colon A\to \mco$ as above, the following conditions are equivalent:
	\begin{enumerate}[\quad\rm(1)]
		\item
		The local ring $A$ is regular at $\lambda$.
		\item
		The rank of the $\mco$-module $\fp_\lambda/\fp^2_\lambda$ is $\height \fp_\lambda$.
		\item
		The $\mco$-module $\cmod{\lambda}(A)$ is torsion.
		\item
		The $\mco$-module $\cmod{\lambda}(M)$ is torsion for each finitely generated $A$-module $M$.
	\end{enumerate}
	Moreover, when these conditions hold the $\mco$-module $\cmod{\lambda}(A)$ is cyclic. \qed
\end{theorem}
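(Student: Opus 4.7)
The plan is to reduce all four conditions to statements about the local ring $R \colonequals A_{\fp_\lambda}$, whose residue field $K$ is the field of fractions of $\mco$ and whose Krull dimension is $c$. Two formal observations do the bulk of the work: since $\fp_\lambda$ annihilates both $\fp_\lambda/\fp_\lambda^2$ and each $\ext^i_A(\mco, M)$, localization at $\fp_\lambda$ coincides with $-\otimes_\mco K$ on these modules, and together with flat base change one obtains
\[
(\fp_\lambda/\fp_\lambda^2) \otimes_\mco K \;\cong\; \fm_R/\fm_R^2
\quad\text{and}\quad
\ecoh^c_\lambda(M) \otimes_\mco K \;\cong\; \ext^c_R(K, M_{\fp_\lambda})\,,
\]
where $\fm_R = \fp_\lambda R$. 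Thus $\rank_\mco(\fp_\lambda/\fp_\lambda^2)$ equals the embedding dimension of $R$, and (1)$\Leftrightarrow$(2) is immediate from the definition of regularity.

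For (1)$\Rightarrow$(4): assume $R$ is regular of dimension $c$. A regular system of parameters yields a Koszul free resolution of $K$ of length $c$, so the functor $\ext^c_R(K,-)$ is right exact. Applied to the surjection $M_{\fp_\lambda}\twoheadrightarrow M_{\fp_\lambda}/\fm_R M_{\fp_\lambda}$, this produces surjectivity on top Ext; via the identification above this translates to $\cmod\lambda(M)\otimes_\mco K = 0$, i.e., $\cmod\lambda(M)$ is $\mco$-torsion. The implication (4)$\Rightarrow$(3) is trivial by taking $M=A$.

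The main obstacle is (3)$\Rightarrow$(1), which I would prove contrapositively. Suppose $R$ is not regular. By Auslander--Buchsbaum--Serre, the residue field $K$ has infinite projective dimension over $R$, so the Betti number $\dim_K\ext^c_R(K,K)$ is strictly larger than its regular value of $1$. Using the long exact sequence of $\ext^*_R(K,-)$ applied to $0\to \fm_R \to R \to K \to 0$, surjectivity of $\ext^c_R(K,R)\to\ext^c_R(K,K)$ is equivalent to vanishing of the connecting map
\[
\delta\colon\ext^c_R(K,K)\to\ext^{c+1}_R(K,\fm_R)\,.
\]
The crux is to show $\delta\neq 0$ whenever $R$ is non-regular, by a careful analysis of the minimal free resolution of $K$---this is the content of Theorem~2.5 of \cite{Iyengar/Khare/Manning:2022a}, and is where I expect the real work to lie. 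Once the four conditions are known equivalent, cyclicity of $\cmod\lambda(A)$ under (1) is immediate: for regular $R$ of dim $c$, $\ext^c_R(K,K)\cong K$, so $\ecoh^c_\lambda(\mco)$ is a torsion-free $\mco$-module of rank one, hence free, and $\cmod\lambda(A)$ is a quotient of it.
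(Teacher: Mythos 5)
Your reduction to the localization $R=A_{\fp_\lambda}$, the identification of (1)$\Leftrightarrow$(2) with the definition of regularity, the Koszul/right-exactness argument for (1)$\Rightarrow$(4), and the cyclicity argument are all correct and match the paper, which itself only cites \cite[Theorem 2.5, Lemma 2.6]{Iyengar/Khare/Manning:2022a} for this statement. The one step you leave open --- nonvanishing of the connecting map $\delta$ for non-regular $R$ --- is exactly the key input the paper isolates as Lescot's theorem (recorded in \ref{ch:lescot}), so your outline is faithful to the intended proof.
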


	Condition (2) is that the embedding dimension of the ring $A_{\fp_\lambda}$ equals its Krull dimension, so (1)$\Leftrightarrow$(2) is one definition of regularity; see \cite[Definition~2.2.1]{Bruns/Herzog:1998}. The key input in proving (1)$\Leftrightarrow$(3) is the following result due to Lescot~\cite{Lescot:1983}; see also \cite{Avramov/Iyengar:2013}.

 \begin{chunk}
 \label{ch:lescot}
    A noetherian local ring $R$ is regular if and only if the map
    \[
    \ext_R(k,R)\longrightarrow \ext_R(k,k)
    \]
    induced by the canonical surjection $R\to k$ to the residue field of $R$, is nonzero. When this is the case, the map above is nonzero in (upper) degree $\dim R$.
\end{chunk}

The result below is implicit in the proof of (1)$\Rightarrow$(4) in Theorem~\ref{th:acat}, in \cite{Iyengar/Khare/Manning:2022a}. We make it explicit, for it is used also in proving Lemma~\ref{le:injective-at-lambda} and Theorem~\ref{th:regular} below.

\begin{lemma}
\label{le:iso-at-lambda}
Let $\eps\colon R\to S$ be a surjective map of noetherian rings such that the ideal $\Ker(\eps)$ is generated by a regular sequence, and set $c\colonequals \dim R-\dim S$. For any $R$-module $M$ the map below is bijective:
\[
\ext^c_R(S,\eps\otimes_RM)\colon \ext^c_R(S,M)\longrightarrow \ext^c_R(S,S\otimes_RM)\,.
\]
\end{lemma}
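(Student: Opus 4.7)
The plan is to compute both sides via a Koszul resolution and show that, after canonical identifications, the map in question becomes the standard isomorphism $M/(\bs{x})M \xrightarrow{\sim} S \otimes_R M$.

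Since $\Ker(\eps)$ is generated by a regular sequence $\bs{x} = x_1, \ldots, x_n$ in $R$, and each element of a regular sequence drops the Krull dimension of the quotient by one, the length $n$ equals $\dim R - \dim S = c$. Thus $\bs{x}$ is a regular sequence of length exactly $c$, and the Koszul complex $K = K(\bs{x}; R)$ is a free $R$-resolution of $S = R/(\bs{x})$.

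For any $R$-module $N$, compute $\ext^c_R(S, N) = H^c(\Hom_R(K, N))$. The top term $\Hom_R(K_c, N)$ is identified with $N$ via $K_c \cong R$, and the image of the incoming differential from $\Hom_R(K_{c-1}, N) = N^c$ is $(\bs{x}) N$ (up to signs). Hence there is a natural-in-$N$ isomorphism
\[
\ext^c_R(S, N) \xrightarrow{\ \sim\ } N/(\bs{x}) N \;=\; S \otimes_R N \, .
\]

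Applying naturality to the map $\eps \otimes_R M \colon M \to S \otimes_R M$, the map $\ext^c_R(S, \eps \otimes_R M)$ translates to the induced map $M/(\bs{x}) M \to (S \otimes_R M)/(\bs{x})(S \otimes_R M)$ sending $[m] \mapsto [1 \otimes m]$. Since $(\bs{x})$ annihilates $S \otimes_R M$, the target simplifies to $S \otimes_R M$, and the resulting map is precisely the canonical isomorphism $M/(\bs{x}) M \xrightarrow{\sim} S \otimes_R M$. This yields the desired bijectivity.

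There is no real obstacle here; the argument is a direct unwinding of the Koszul computation. The only mild technical point is checking naturality of the identification $\ext^c_R(S, -) \cong S \otimes_R (-)$, which is immediate from the functoriality of $\Hom_R(K, -)$.
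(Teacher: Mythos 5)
Your proof is correct and takes essentially the same route as the paper's: both resolve $S$ by the Koszul complex on the regular sequence and observe that in top degree the functor $\Hom_R(K,-)$ computes $N \mapsto N/(\bs{x})N = S\otimes_R N$, under which identification the map in question becomes the canonical isomorphism $M/(\bs{x})M \cong S\otimes_R M$. The only point glossed over (in the paper as well) is that the regular sequence has length exactly $c=\dim R-\dim S$, which is guaranteed in the local setting where the lemma is applied.
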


\begin{proof}
The Koszul complex, say $K$,  on any regular sequence generating $\Ker(\eps)$ is a minimal resolution of $S$ as an $R$-module. The map $\ext^c_R(S,\eps\otimes_RM)$ is the one obtained in cohomology in (upper) degree $c$ from the morphism  in the upper row of the following commutative diagram of complexes:
\[
\begin{tikzcd}[column sep=large]
\Hom_R(K,M) \ar[rr,"{\Hom_R(K,\eps\otimes M)}"] && \Hom_R(K,S\otimes_R M) \\
\Hom_R(K,R)\otimes_RM \ar[u,"\cong"] \ar[rr, "{\Hom_R(K,R)\otimes (\eps\otimes M)}"] && \Hom_R(K,R)\otimes_R (S\otimes_RM) \ar[u,"\cong" swap]
\end{tikzcd}
\]
The vertical maps are isomorphisms because $K$ is a finite free $R$-complex.  It is clear that the map in the lower row induces  a bijection in cohomology in the top degree, $c$. Thus the same holds for the one in the upper row, as claimed. 
\end{proof}

We denote $\acat$ the category whose objects are pairs $(A,\lambda)$ satisfying the equivalent conditions in Theorem~\ref{th:acat}. A morphism $\vf\colon (A,\lambda)\to (A',\lambda')$ in this category is a map of $\mco$-algebras $\vf\colon A\to A'$ over $\mco$; that is to say, with $\lambda'\circ \vf = \lambda$. We write $\acat(c)$ for the subcategory of $\acat$ consists of pairs $(A,\lambda)$ such that $\height \fp_\lambda=c$.

\begin{lemma}
\label{le:injective-at-lambda}
For any $(A,\lambda)$ in $\acat(c)$ and finitely generated $A$-module $M$ the map
\[
\ecoh^c_{\lambda}(\lambda\otimes_AM)\colon \ecoh^c_{\lambda}(M) \longrightarrow \ecoh^c_{\lambda}(M/\fp_\lambda M)
\]
is one-to-one.
\end{lemma}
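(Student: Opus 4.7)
My plan is to localize at $\fp_\lambda$ and invoke Lemma~\ref{le:iso-at-lambda}. First I would observe that the $A$-module $\ext^c_A(\mco,M)$ is annihilated by $\fp_\lambda$: the $A$-action on Ext is inherited from the action on the first argument, which is $\mco=A/\fp_\lambda$. Thus $\ext^c_A(\mco,M)$ and $\ext^c_A(\mco,M/\fp_\lambda M)$ are naturally finitely generated $\mco$-modules, and localizing them at $\fp_\lambda$ over $A$ amounts to inverting the uniformizer $\varpi$ of $\mco$.

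Since $A$ is noetherian and $\mco, M$ are finitely generated, localization commutes with Ext, so, letting $K$ denote the fraction field of $\mco$ (equivalently, the residue field of $A_{\fp_\lambda}$), the localization at $\fp_\lambda$ of the map in question identifies with
\[
\ext^c_{A_{\fp_\lambda}}(K,M_{\fp_\lambda})\longrightarrow \ext^c_{A_{\fp_\lambda}}(K,K\otimes_{A_{\fp_\lambda}}M_{\fp_\lambda})\,.
\]
Because $(A,\lambda)$ lies in $\acat(c)$, the local ring $A_{\fp_\lambda}$ is regular of dimension $c$, so its maximal ideal $\fp_\lambda A_{\fp_\lambda}$ is generated by a regular sequence. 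Lemma~\ref{le:iso-at-lambda} then applies directly, with $R=A_{\fp_\lambda}$ and $S=K$, to show that this localized map is a bijection.

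From this it follows that the kernel of the original map $\ext^c_A(\mco,M)\to\ext^c_A(\mco,M/\fp_\lambda M)$ becomes zero after tensoring with $K$ over $\mco$, i.e.\ it is $\mco$-torsion. Since $\mco$ is a domain, any map of finitely generated $\mco$-modules whose kernel is torsion induces an injection on torsion-free quotients, which is exactly the conclusion sought. I do not anticipate any serious obstacle; the content is simply to recognize that the regularity hypothesis at $\fp_\lambda$ supplies the Koszul-resolution input needed for Lemma~\ref{le:iso-at-lambda}, after which the remaining steps are formal.
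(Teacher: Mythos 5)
Your argument is correct and is essentially the paper's own proof: localize at $\fp_\lambda$, use regularity of $A_{\fp_\lambda}$ to see that the kernel of $A_{\fp_\lambda}\to E$ is generated by a regular sequence of length $c$, and apply Lemma~\ref{le:iso-at-lambda}. The only cosmetic difference is that you pass to torsion-free quotients at the end (via the observation that a torsion kernel dies in the torsion-free quotient) rather than at the start, which is immaterial.
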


\begin{proof}
Set $R\colonequals A_{\fp_\lambda}$ and let $\eps \colon R\to E$ be the map obtained by localizing $\lambda$ at $\fp_\lambda$; here $E$ is the residue field of $R$, which is also the field of fractions of $\mco$. Since injectivity of a map of torsion-free $\mco$-modules can be detected after passing to the field of fractions, it suffices to check that the map 
\[
\ecoh^c_\lambda(\lambda\otimes_AM)_{\fp_\lambda} \cong \ext_R^c(E,\eps\otimes_R M)
\]
is one-to-one. Since $R$ is regular the ideal $\Ker(\eps)$ is generated by a regular sequence of length $c$; see~\cite[Proposition~2.2.4]{Bruns/Herzog:1998}.  It remains to apply Lemma~\ref{le:iso-at-lambda}. 
\end{proof}

In the work of Hida~\cite{Hida:1981} and Ribet~\cite{Ribet:1983} congruence modules (for codimension $c=0$)  are attached to augmentations $\bbT\to \mco$ of Hecke algebras $\bbT$  that act faithfully on certain localized Betti cohomology groups $H^1(X_1(N),\mco)_\fm$. They  measure the complexity of $\bbT$ and their vanishing is equivalent to $\bbT$ being smooth, namely just $\mco$. Analogously we show in the result below  that for rings $A \in \acat$  the vanishing of  either the  congruence module $\cmod{\lambda}(A)$ or $\con{\lambda}(A)$, the torsion part of the cotangent module, at any augmentation   $\lambda\colon A \to \mco$  implies $A$ is  smooth.

Unlike most results in  \cite[Part~1]{Iyengar/Khare/Manning:2022a}, the following theorem does not make assumptions on the depth of the ring $A$.

\begin{theorem}
\label{th:regular}
For $(A,\lambda)$ in $\acat$, the local ring $A$ is regular if and only if $\con{\lambda}(A)=0$, if and only if $\cmod{\lambda}(A)=0$.
\end{theorem}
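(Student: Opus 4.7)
The plan is to dispense with the easy implications first and then tackle the hard direction $\cmod{\lambda}(A) = 0 \Rightarrow A$ regular by reducing to Lescot's criterion~\ref{ch:lescot} via a Bockstein bridge. For the easy directions: if $A$ is regular, then $\mco = A/\fp_\lambda$ is also regular, so $\fp_\lambda$ is generated by a regular sequence of length $c$ forming part of a regular system of parameters; thus $\fp_\lambda/\fp_\lambda^2$ is free of rank $c$ over $\mco$ (giving $\con{\lambda}(A) = 0$), and Lemma~\ref{le:iso-at-lambda} applied with $\eps = \lambda$ yields a bijection $\ext^c_A(\mco, A) \xrightarrow{\sim} \ext^c_A(\mco, \mco)$ (so $\cmod{\lambda}(A) = 0$). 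Conversely, if $\con{\lambda}(A) = 0$ then Theorem~\ref{th:acat}(2) upgrades $\fp_\lambda/\fp_\lambda^2$ to a free $\mco$-module of rank $c$; Nakayama in $A$ (using that $(\fp_\lambda/\fp_\lambda^2) \otimes_\mco k$ has $k$-dimension $c$) then gives $\fp_\lambda$ a system of $c$ generators, so $\fm_A = \fp_\lambda + (\varpi)$ needs at most $c+1$ generators, and the chain $c+1 \le \dim A \le \mathrm{emb.dim}(A) \le c+1$ forces $A$ to be regular.

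For the main obstacle, $\cmod{\lambda}(A) = 0 \Rightarrow A$ regular, the strategy is to produce a nonzero map $\ext^{c+1}_A(k, A) \to \ext^{c+1}_A(k, k)$, where $k$ is the common residue field of $\mco$ and $A$, and then invoke~\ref{ch:lescot}. The hypothesis combined with Lemma~\ref{le:injective-at-lambda} makes $\ecoh^c_\lambda(A) \to \ecoh^c_\lambda(\mco)$ a bijection of finitely generated torsion-free $\mco$-modules, and both sides are nonzero because localizing at $\fp_\lambda$ recovers the one-dimensional top $\ext$ of the residue field over the regular local ring $A_{\fp_\lambda}$. A diagram chase with the torsion-free quotient surjections $\ext^c_A(\mco, M) \twoheadrightarrow \ecoh^c_\lambda(M)$ for $M = A, \mco$ shows that the composite $\ext^c_A(\mco, A) \to \ext^c_A(\mco, \mco) \twoheadrightarrow \ecoh^c_\lambda(\mco)$ is surjective, and this surjection remains nonzero after reducing modulo $\varpi$.

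To build the Bockstein bridge, apply $\ext^*_A(-, M)$ to the short exact sequence $0 \to \varpi\mco \to \mco \to k \to 0$, using the $A$-linear identification $\varpi\mco \cong \mco$ via multiplication by $\varpi$; the associated long exact sequence produces a natural embedding $\ext^c_A(\mco, M)/\varpi \hookrightarrow \ext^{c+1}_A(k, M)$. Functoriality in $M$ combined with the previous nonvanishing then forces $\ext^{c+1}_A(k, A) \to \ext^{c+1}_A(k, \mco)$ to be nonzero. Applying $\ext^*_A(k, -)$ to the same short exact sequence: since $\Hom_A(k, k) = k$ acts on $\ext^*_A(k, -)$ through Yoneda, the ideal $\fm_A$ annihilates $\ext^*_A(k, -)$, and in particular multiplication by $\varpi$ is zero, so the long exact sequence degenerates into embeddings $\ext^i_A(k, \mco) \hookrightarrow \ext^i_A(k, k)$. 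Composing yields a nonzero map $\ext^{c+1}_A(k, A) \to \ext^{c+1}_A(k, k)$, and~\ref{ch:lescot} forces $A$ to be regular. The main obstacle is setting up this Bockstein bridge correctly and tracking nonvanishing through it; all other steps reduce to formal bookkeeping.
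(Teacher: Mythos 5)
Your proposal is correct and the main direction ($\cmod{\lambda}(A)=0\Rightarrow A$ regular) follows essentially the same route as the paper: the connecting maps of $0\to\mco\xrightarrow{\varpi}\mco\to k\to 0$ give injections $\ext^c_A(\mco,M)/\varpi\hookrightarrow\ext^{c+1}_A(k,M)$ and $\ext^{c+1}_A(k,\mco)\hookrightarrow\ext^{c+1}_A(k,k)$, through which the nonvanishing forced by $\cmod{\lambda}(A)=0$ is transported to Lescot's criterion. The only (harmless) divergence is in the equivalence with $\con{\lambda}(A)=0$, where you use Nakayama plus the comparison of embedding dimension with $\height\fp_\lambda+\dim A/\fp_\lambda$ in place of the paper's analysis of the linear parts of a minimal power-series presentation of $A$; both arguments are standard and equally valid.
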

    
\begin{proof}
We first verify that $A$ is regular if and only if $\con{\lambda}(A)=0$. As $A$ is a complete $\mco$-algebra, one has $A\cong P/I$ where $P\colonequals \mco\pos{t_1,\dots,t_n}$, a ring of formal power series over $\mco$, the ideal $I\subseteq (\varpi)(\bs  t)+(\bs  t)^2$, and
$\lambda\colon A\to \mco$ is quotient by $(\bs t)$. 

Let $\bs  f \colonequals f_1,\dots,f_m$ be a minimal generating set for $I$. The cotangent module $\fp_\lambda /\fp_\lambda^2$ depends only on $n$ and the linear part of the $f_i$, in the following sense: By our assumption on $I$,  each $f_i$ has an unique expression of the form
\begin{equation}
\label{eq:A-presentation}
f_i \colonequals \sum_{j=1}^n{u_{ij}}t_j + g_i \qquad\text{with  $u_{ij}\in (\varpi) \mco $ and $g_i\in (\bs  t)^2$.}
\end{equation}
Then one has a presentation
\[
\mco^m \xrightarrow{\ (u_{ij}) \ } \mco^n \longrightarrow \fp_\lambda/\fp_\lambda^2\longrightarrow 0\,.
\]
The torsion part of $\fp_\lambda/\fp_\lambda^2$ is zero if and only if $(u_{ij})=0$, that is to say, $(\bs  f)\subseteq (\bs  t)^2$. Since $A$ is regular at $\lambda$ this condition is equivalent to $\bs  f=0$, as desired.

Next we verify the claim that $A$ is regular if and only if $\cmod{\lambda}(A)=0$.

When $A$ is regular, $\Ker(\lambda\colon A\to\mco)$ is generated by a regular sequence of length $c\colonequals \height (\Ker\lambda)$; see~\cite[Proposition~2.2.4]{Bruns/Herzog:1998}. Thus Lemma~\ref{le:iso-at-lambda} yields that the map $\ext^c_A(\mco,\lambda)$ is one-to-one so $\cmod{\lambda}(A)=0$.

Assume $\cmod{\lambda}(A)=0$.  To verify that $A$ is regular it suffices to verify that the map 
\[
\ext_A(k,\eps)\colon \ext_A(k,A)\to \ext_A(k,k)\,,
\]
induced by the quotient map $\eps\colon A\to k$, is non-zero, for then Lescot's result~\ref{ch:lescot}. 

Let $M$ be a finitely generated $A$-module. The exact sequence 
\begin{equation}
    \label{eq:residue}
    0\longrightarrow \mco \xrightarrow{\ \varpi\ } \mco\longrightarrow k\longrightarrow 0
\end{equation} 
of $A$-modules induces exact sequences of $k$-modules
\[
0 \longrightarrow k\otimes_\mco \ext_A^i(\mco,M)\xrightarrow{\ \eth^{i+1}(M)\ } \ext_A^i(k,M)  \longrightarrow \ext_A^{i+1}(\mco,M)[\varpi]\longrightarrow 0    
\]
For what follows the relevant point is that the maps $\eth^i(M)$ are inclusions.  Set $c\colonequals \height(\fp_\lambda)$ and consider the following commutative diagram of $k$-vector spaces:
\[
\begin{tikzcd}
	k\otimes_\mco \ext^c_A(\mco,A)\arrow{r}\arrow[d,hookrightarrow, "\eth(A)" swap]\arrow[dr,dashrightarrow]
                    & k\otimes_\mco \ext^c_A(\mco,\mco) \arrow[d, hookrightarrow, "\eth(\mco)"] \\
	\ext^{c+1}_A(k,A)\arrow{r}\arrow[rr, bend right, "{\ext^{c+1}_A(k,\eps)}"]
                    & \ext^{c+1}_A(k,\mco) \arrow[r, hookrightarrow]
             &\ext^{c+1}_A(k,k)
	\end{tikzcd}
\]
The map in the top row is induced by $\lambda\colon A\to \mco$ and the ones in the lower row are induced by $A\to \mco\to k$. That the map in the lower right is one-to-one follows by considering the long exact sequence in cohomology that arises by applying $\Hom_A(k,-)$ to the exact sequence \eqref{eq:residue}.
It is easy to verify that the hypothesis $\cmod{\lambda}(A)=0$ implies the map in the top row is nonzero, and hence so is the diagonal map. It then follows from the commutative diagram that the map $\ext^{c+1}_A(k,\eps)$ is nonzero. This is as desired.
\end{proof}

\begin{remark}
Consider the  ordinary  Hida Hecke algebra  $\bbT^{\ord}$ of tame level $N$.   It is finite flat over $\Lambda\colonequals \mco\pos t$, with $t$ the weight variable,  and $\bbT^{\ord}/(t)=\bbT$ is a classical Hecke algebra, acting faithfully on $H^1(X_1(Np),\mco)_\fm$.  Consider an augmentation $\lambda\colon\bbT^{\ord} \to \bbT \to \mco$ arising from a ($p$-stabilized) newform $f \in S_1(\Gamma_1(Np^r))$. The vanishing of the congruence module of $\bbT$ for  the augmentation $\bbT \to \mco$ implies  $\bbT=\mco$ and $\bbT^{\ord}=\Lambda$ while the vanishing of the congruence module for $\bbT^{\ord} \to \mco$ implies that $\bbT^{\ord}$  is smooth,   while $\bbT$ may not be smooth. In other words  $\cmod{\lambda}(\bbT^{\ord})=0$ implies $\bbT^{\ord}=\mco\pos{x}$, furthermore  $x$ can be taken to be the weight variable $t$  if and only if the  classical congruence module $\cmod{\lambda}(\bbT)=0$ also  vanishes. 
\end{remark}

Next we describe a pairing associated with the definition of congruence modules. This too appears in \cite{Iyengar/Khare/Manning:2022a}, but does not play a major role in the development there. The presentation below is more transparent, and is used to give another perspective on some of the subsequent results.

\begin{chunk}
\label{sec:pairing}
For any finitely generated $A$-module $M$, one has natural isomorphisms
\begin{align*}
\tfree{\ext^c_A(\mco,M/\fp_\lambda M)} 
	&\cong  \tfree{\ext^c_A(\mco,\mco)}\otimes_{\mco} \tfree{(M/\fp_\lambda M)} \\
	&\cong \Hom_{\mco}(\Hom_{\mco}(M,\mco), \tfree{\ext^c_A(\mco,\mco)})\,.
\end{align*}
Thus, the map
\[
\ecoh^c_{\lambda}(\lambda\otimes M)\colon \ext^c_A(\mco,M)\longrightarrow \tfree{\ext^c_A(\mco,M/\fp_\lambda M)}
\]
whose cokernel is the congruence module of $M$, is adjoint to the map
\[
\langle -,-\rangle_M \colon \tfree{\ext^c_A(\mco,M)} \otimes_\mco \Hom_{A}(M,\mco) \longrightarrow \tfree{\ext^c_A(\mco,\mco)}\,.
\]
The \emph{congruence ideal} of $M$, with respect to the augmentation $\lambda$, is the image of this pairing:
\[
\eta_\lambda(M)\colonequals \mathrm{Image}\langle -,-\rangle_M\,.
\]
Localizing at $\fp_\lambda$, it is easy to verify that the free $\mco$-modules $\ecoh^c_{\lambda}(M)$ and $\Hom_A(M,\mco)$ have the same rank and that  $\ecoh^c_{\lambda}(\mco)$ has rank one, so
\[
\length_\mco(\mco/\eta_\lambda(M)) \le \length_\mco \cmod{\lambda}(M) \le \rank_\lambda(M)\cdot \length_\mco(\mco/\eta_\lambda(M))\,.
\]
Here $\rank_\lambda(M)$ denotes the rank of $M$ at $\lambda$, that is to say, the rank of the $A_\lambda$-module $M_{\fp_\lambda}$. In particular, when this rank equals $1$, the length of the congruence module can be computed from the pairing.

The pairing above is induced--by passage to torsion-free quotients--by the natural pairing given by composition of morphisms:
\begin{equation}
    \label{eq:pairing1}
\langle -,-\rangle_M \colon \ext^c_A(\mco,M)\otimes_{\mco} \Hom_A(M,\mco) \longrightarrow \ext^c_A(\mco,\mco)\,.
\end{equation}
 Namely, $\ext^c_A(\mco,M)$ can be realized as $\Hom_{\cat D}(\mco,M[c])$, the morphisms in the derived category of $A$ from $\mco$ to $M[c]$, and given such a morphism $f$ and a map $g\colon M\to \mco$, the pairing above is
\[
\langle f,g\rangle \colonequals g\circ f \colon \mco \to \mco[c]\,.
\]
In terms of the Yoneda interpretation of $\ext^c_A(\mco,M)$ as equivalence classes 
\[
0\longrightarrow M\longrightarrow X_{c-1} \longrightarrow X_{c-1}\longrightarrow 
    \cdots \longrightarrow X_0 \longrightarrow \mco \longrightarrow 0
\]
of exact sequences, the pairing is given by taking push-out along $g\colon M\to \mco$.
\end{chunk}

\subsubsection*{Cohen-Macaulay modules}
When $M$ is Cohen-Macaulay of dimension $c+1$, local duality yields an identification
\[
\Hom_A(\mco,M) \cong \ext^c_A(\mco,M^\vee) \quad \text{where $M^\vee \cong H_{c+1}(\mathrm{RHom}_A(M,\omega_A))$}
\]
Here $\omega_A$ is the dualizing complex of $A$, normalized as in \cite{stacks-project}; see \cite[\S4]{Iyengar/Khare/Manning:2022a} for details. With this identification, the pairing \eqref{eq:pairing1} takes the form
\begin{equation}
\label{eq:pairing2}
\langle -,-\rangle \colon \ext^c_A(\mco,M)\otimes_{\mco} \ext^c_A(\mco, M^{\vee}) 
    \longrightarrow \ext^c_A(\mco,\mco)\,.
\end{equation}
See \cite[Proposition~4.7]{Iyengar/Khare/Manning:2022a}.

The pairing~\eqref{eq:pairing1} can be described concretely when $c\le 1$. One simplification that occurs then is that $\ext^1_A(\mco,\mco)$ is already torsion-free, as is explained below.

Consider the exact sequence 
\begin{equation}
    \label{eq:Opres}
0 \longrightarrow \fp_\lambda \longrightarrow A \longrightarrow \mco \longrightarrow 0\,.    
\end{equation}
Applying $\Hom_A(-,M)$ yields the exact sequence
\[ 
M\cong \Hom_A(A,M) \longrightarrow \Hom_A(\fp_\lambda,M) 
    \longrightarrow \ext^1_A(\mco,M) \longrightarrow \ext_A^1(A,M)=0\,.
    \] 
This justifies the following result.

\begin{lemma}
\label{le:c=1}
\pushQED{\qed}
For any $c\ge 0$, and any $A$-module $M$, there is a natural isomorphism of $\mco$-modules 
\[
\ext^1_A(\mco,M)\cong \coker(M\to \Hom(\fp_\lambda,M))\,. \qedhere
\]
\end{lemma}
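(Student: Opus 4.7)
The plan is essentially the three-line argument already sketched in the paragraph immediately preceding the lemma, which I would formalize as follows. Start from the tautological short exact sequence
\[
0 \longrightarrow \fp_\lambda \longrightarrow A \longrightarrow \mco \longrightarrow 0
\]
of $A$-modules (equation \eqref{eq:Opres}) and apply the left-exact functor $\Hom_A(-,M)$ to obtain the long exact Ext sequence
\[
0 \to \Hom_A(\mco,M) \to \Hom_A(A,M) \to \Hom_A(\fp_\lambda,M) \to \ext^1_A(\mco,M) \to \ext^1_A(A,M) \to \cdots
\]
Two simplifications then collapse this to what we want: the evaluation-at-$1$ isomorphism $\Hom_A(A,M)\cong M$, and the vanishing $\ext^1_A(A,M)=0$, which holds because $A$ is free (hence projective) as a module over itself. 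Reading off the four-term exact piece around $\ext^1_A(\mco,M)$ then produces a surjection $\Hom_A(\fp_\lambda,M)\twoheadrightarrow \ext^1_A(\mco,M)$ whose kernel is the image of the connecting map $M\to \Hom_A(\fp_\lambda,M)$ (which is just restriction along $\fp_\lambda \hookrightarrow A$ of multiplication on $M$). That yields the claimed identification of $\ext^1_A(\mco,M)$ with the cokernel.

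For the naturality assertion, I would observe that the short exact sequence \eqref{eq:Opres} is intrinsic to $(A,\lambda)$ and the long exact sequence in Ext is functorial in $M$, so the resulting isomorphism is natural in $M$; if one also wants naturality in $(A,\lambda)$, a morphism $(A,\lambda)\to (A',\lambda')$ in $\acat$ sends $\fp_\lambda$ into $\fp_{\lambda'}$ and induces compatible maps on both sides of the stated isomorphism. The hypothesis ``$c\ge 0$'' plays no role whatsoever — the formula is valid for every augmentation — and is presumably included only because the lemma is being invoked in the ensuing discussion of the cases $c\le 1$, where $\ext^1$ is the relevant degree. There is no genuine obstacle to overcome: the statement is a direct consequence of the long exact Ext sequence together with the projectivity of $A$ over itself, and the QED inside the statement confirms that the authors intend this one-step verification to serve as its own proof.
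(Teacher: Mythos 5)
Your proposal is correct and is exactly the argument the paper gives: the displayed four-term exact sequence obtained by applying $\Hom_A(-,M)$ to \eqref{eq:Opres}, together with $\Hom_A(A,M)\cong M$ and $\ext^1_A(A,M)=0$, is precisely what the authors present immediately before the lemma as its justification. Nothing is missing.
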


The isomorphism above assigns to any $A$-linear map $f\colon \fp_\lambda \to M$ the exact sequence obtained by push-out of the exact sequence in \eqref{eq:Opres} along $f$:
\[
\begin{tikzcd}
    0 \arrow[r] & \fp_\lambda \arrow[d,"f" swap] \arrow[r]  & A \arrow[d] \arrow[r]  & \mco  \arrow[d, equal] \arrow[r]  & 0 \\
    0 \arrow[r] & M \arrow[r]  & X \arrow[r]  & \mco \arrow[r]  & 0 
\end{tikzcd}
\]
The natural map $\mco\to \Hom_{A}(\fp_\lambda,\mco)$ is zero, so for $M\colonequals\mco$ the isomorphism in Lemma~\ref{le:c=1} becomes
\begin{equation}
\label{eq:ecoh1}
\ext^1_A(\mco,\mco)\cong \Hom_A(\fp_\lambda,\mco)\cong \Hom_{\mco}(\fp_\lambda/\fp^2_\lambda,\mco)
\end{equation}
which is already torsion-free. 

Now we return to the pairing \eqref{eq:pairing1}.

\subsection*{The case $c=0$}
Since $\Hom_A(\mco,M)=M[\fp_\lambda]$, the $\fp_\lambda$-torsion submodule of $M$, the pairing \eqref{eq:pairing1} becomes
\begin{gather*}
M[\fp_\lambda]\otimes_\mco \Hom_A(M,\mco) \longrightarrow \mco \\
 m \otimes f \mapsto f(m)
\end{gather*}
When $\depth_AM\ge 1$, one has $M^\vee\cong \Hom_\mco (M,\mco)$ this pairing is equivalent to the one given by the composition
\[
M[\fp_\lambda]\otimes_\mco M^{\vee}[\fp_\lambda] \longrightarrow M\otimes_{\mco} M^{\vee} \longrightarrow \mco
\]
where the map on the right is the obvious one.

\subsection*{The case $c=1$}

With this description, for $c=1$ the pairing \eqref{eq:pairing1} is induced by the obvious pairing
\[
\Hom_A(\fp,M)\otimes_A \Hom_A(M,\mco) \longrightarrow \Hom_A(\fp,\mco)\cong \Hom_{\mco}(\fp/\fp^2,\mco)
\]
given by composition of maps. Since $\ext^1_A(\mco,\mco)$ is torsion-free, as in the case $c=0$ the ideal $\eta_\lambda(M)$ is just the image of the pairing above.

\subsection*{Structure of $\ecoh^*_A(\mco)$}
A key input in the development of the commutative algebraic properties of the congruence module is a structure theorem for $\ecoh^*_A(\mco)$. The Yoneda product gives $\ext^*_A(\mco,\mco)$ the structure of a graded $\mco$-algebra, and this is inherited by its torsion-free quotient, $\ecoh^*_\lambda(\mco)$. The remarkable fact~\cite[Theorem~6.8]{Iyengar/Khare/Manning:2022a} is that although the Ext-algebra itself can be highly non-commutative, and infinite, $\ecoh^*_\lambda(\mco)$ is just an exterior algebra generated by its degree one component
\[
\ecoh^1_\lambda(\mco)\cong \Hom_\mco(\fp_\lambda/\fp^2_\lambda,\mco)\,.
\]
See \ref{eq:ecoh1} for the isomorphism above.  As explained in \cite[Introduction]{Iyengar/Khare/Manning:2022a}, this may be seen an an integral version of a result, due to Serre, on the structure of the Ext algebra of a regular local ring.  The proof of this structure theorem for $\ecoh^*_\lambda(\mco)$ uses ideas from the theory of differential graded algebras. For the present purpose the important takeaway is that there is an natural isomorphism of $\mco$-modules
\begin{equation}
\label{eq:ecoh-top}
\bigwedge^c \Hom_\mco(\fp_\lambda/\fp_\lambda^2, \mco) \xrightarrow{\ \cong\ } \ecoh^c_\lambda(\mco)\,.    
\end{equation}
The naturality assertion is that given any morphism $\varphi\colon (A,\lambda) \to (A',\lambda')$ in $\acat(c)$, the induced map $\fp_{\lambda}/\fp_{\lambda}^2\to \fp_{\lambda'}/\fp_{\lambda'}$ gives rise to commutative square
\[
\begin{tikzcd}
    \bigwedge^c  \Hom_\mco(\fp_{\lambda}/\fp_{\lambda}^2, \mco) \arrow[r,"\cong"] 
        & \ecoh^c_\lambda(\mco) \\
    \bigwedge^c  \Hom_\mco(\fp_{\lambda'}/\fp_{\lambda'}^2, \mco) \arrow[u] \arrow[r,"\cong"] 
        & \ecoh^c_{\lambda'}(\mco) \arrow[u]
        \end{tikzcd}
\]
of maps of $\mco$-modules. This leads to the following \emph{invariance of domain} property for congruence modules; see \cite[Theorem~7.4]{Iyengar/Khare/Manning:2022a}.

\begin{theorem}
    \label{th:invariance-of-domain}
    Given a surjective map $\varphi\colon (A,\lambda) \to (A',\lambda')$ in $\acat(c)$, and an $A'$-module $M'$ with $\depth_{A'}M'\ge c$, there is a natural isomorphism of $\mco$-modules $\cmod{\lambda'}(M')\cong \cmod{\lambda}(M')$. \qed
\end{theorem}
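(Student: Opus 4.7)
My plan is to exhibit a natural isomorphism by comparing the defining cokernels of $\cmod{\lambda'}(M')$ and $\cmod{\lambda}(M')$ along the restriction-of-scalars transformation induced by $\varphi$.

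First I would collect the basic identifications. Since $\varphi$ is surjective with $\lambda=\lambda'\circ\varphi$ and $\Ker(\varphi)$ annihilates $M'$, we have $\fp_{\lambda'}=\varphi(\fp_\lambda)$, so $\fp_\lambda M'=\fp_{\lambda'}M'$ and $\Hom_A(M',\mco)=\Hom_{A'}(M',\mco)$. Moreover, the induced surjection $\fp_\lambda/\fp_\lambda^2\twoheadrightarrow \fp_{\lambda'}/\fp_{\lambda'}^2$ has kernel of $\mco$-rank zero (both sides have rank $c$), hence torsion, so applying $\Hom_\mco(-,\mco)$ yields an isomorphism; taking $\bigwedge^c$ and invoking the naturality of \eqref{eq:ecoh-top} then produces an isomorphism $\gamma\colon\ecoh^c_{\lambda'}(\mco)\xrightarrow{\sim}\ecoh^c_\lambda(\mco)$.

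Next I would assemble the commutative square
\[
\begin{tikzcd}
\ecoh^c_{\lambda'}(M') \arrow[r] \arrow[d,"\alpha"]
    & \ecoh^c_{\lambda'}(\mco)\otimes_\mco \tfree{(M'/\fp_{\lambda'}M')} \arrow[d,"\gamma\otimes\idmap"] \\
\ecoh^c_\lambda(M') \arrow[r]
    & \ecoh^c_\lambda(\mco)\otimes_\mco \tfree{(M'/\fp_\lambda M')}
\end{tikzcd}
\]
whose rows---via the identifications of \S\ref{sec:pairing}---are the defining maps of the two congruence modules, and whose right vertical map is an isomorphism by the previous paragraph. Both rows are injective by Lemma~\ref{le:injective-at-lambda}, so the snake lemma reduces the theorem to showing that $\alpha\colon\ecoh^c_{\lambda'}(M')\to\ecoh^c_\lambda(M')$, the natural restriction-of-scalars map, is an isomorphism of $\mco$-modules.

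The hard part will be verifying that $\alpha$ is an isomorphism; this is where the depth hypothesis enters. Localizing at $\fp_\lambda$ converts $\varphi$ into an isomorphism of regular local rings of dimension $c$, so $\alpha$ becomes an isomorphism after inverting $\varpi$; in particular $\ker(\alpha)=0$ by torsion-freeness, and $\coker(\alpha)$ is $\mco$-torsion. To force $\coker(\alpha)=0$, I would analyze the change-of-rings spectral sequence
\[
E_2^{p,q}=\ext^p_{A'}(\mathrm{Tor}^A_q(\mco,A'),M')\Longrightarrow \ext^{p+q}_A(\mco,M')\,,
\]
whose edge map in degree $c$ induces $\alpha$. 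The modules $\mathrm{Tor}^A_q(\mco,A')$ for $q\ge 1$ are supported on $V(\fp_{\lambda'})\subset\mathrm{Spec}(A')$, and the depth hypothesis $\depth_{A'}M'\ge c$, combined with a grade-versus-support analysis at $\fp_{\lambda'}$, should kill the contributions of these $E_2$-terms modulo $\mco$-torsion, yielding the required surjectivity of $\alpha$.
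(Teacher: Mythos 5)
Your overall strategy is sound and is the one the paper has in mind: reduce, via the commutative square whose right-hand vertical map is $\gamma\otimes\idmap$, to showing that the change-of-rings map $\alpha\colon \ext^c_{A'}(\mco,M')\to\ext^c_A(\mco,M')$ induces an isomorphism on torsion-free quotients. The identifications you list (in particular that $(\fp_{\lambda'}/\fp_{\lambda'}^2)^*\to(\fp_\lambda/\fp_\lambda^2)^*$ is bijective because the kernel of the cotangent surjection is torsion, both sides having rank $c$) are exactly the naturality input from \eqref{eq:ecoh-top}, and the change-of-rings spectral sequence is the right tool for $\alpha$. Note that the paper itself gives no proof here, deferring to \cite[Theorem~7.4]{Iyengar/Khare/Manning:2022a}; your argument reconstructs the intended route.

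The one step that does not close as written is the last one. Killing the terms $E_2^{p,q}$ with $q\ge 1$ only \emph{modulo $\mco$-torsion} would give that the edge map $\ext^c_{A'}(\mco,M')\to\ext^c_A(\mco,M')$ has torsion cokernel, and a map of $\mco$-modules with torsion kernel and cokernel need not induce a surjection on torsion-free quotients (consider $\varpi\mco\hookrightarrow\mco$). Since your snake-lemma reduction identifies $\Ker\bigl(\cmod{\lambda'}(M')\to\cmod{\lambda}(M')\bigr)$ with $\coker(\alpha)$, you need $\alpha$ to be genuinely surjective. Fortunately your own ingredients give more than you claim: because $\varphi_{\fp_\lambda}\colon A_{\fp_\lambda}\to A'_{\fp_{\lambda'}}$ is an isomorphism, the modules $T_q\colonequals\operatorname{Tor}^A_q(\mco,A')$ for $q\ge1$ vanish after localizing at $\fp_{\lambda'}$; being also killed by $\fp_{\lambda'}$, they are finite-length $A'$-modules, not merely modules supported on $V(\fp_{\lambda'})$. (The weaker support statement would allow a free $\mco$-summand in $T_1$, and then $E_2^{c-1,1}$ would contain a copy of $\ext^{c-1}_{A'}(\mco,M')$, which can be a nonzero free $\mco$-module when $\depth_{A'}M'=c$.) For a finite-length $A'$-module $N$ one has $\ext^p_{A'}(N,M')=0$ for all $p<\depth_{A'}M'$, so every $E_2^{p,q}$ with $q\ge1$ and $p+q\le c$ vanishes outright, and the edge map in degree $c$ is an isomorphism on the nose. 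With that sharpening the proof is complete.
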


Since $\vf$ is surjective, $\fp_\lambda \cdot A' = \Ker \lambda'$. The hypotheses in the statement above imply that $\vf_{\fp_\lambda}\colon A_{\fp_\lambda}\to A'_{\fp_\lambda}$ is surjective map of regular local rings of dimension $c$ and hence an isomorphism.

\subsection*{Freeness criterion}
Fix $(A,\lambda)$ in $\acat$ and a finitely generated $A$-module $M$. For any $A$-module $X$ one has a map
\[
\ext^c_A(\mco,X)\otimes_{\mco} (M/\fp_\lambda M) \cong \ext^c_A(\mco,X)\otimes_A M 
	\longrightarrow\ext^c_A(\mco,X\otimes_AM)
\]
where the one on the right is a K\"unneth map. This is functorial in $X$, and one gets the commutative diagram below:
\[
\begin{tikzcd}
\tfree{\ext^c_A(\mco, A)}\otimes_{\mco} \tfree{(M/\fp_\lambda M)} \ar{d} \ar{r} 
	& \tfree{\ext^c_A(\mco, \mco )} \otimes_{\mco} \tfree{(M/\fp_\lambda M)} \ar{d} \\
\tfree{\ext^c_A(\mco, M)} \ar{r} & \tfree{\ext^c_A(\mco, M/\fp_\lambda M)}
\end{tikzcd}
\]
The horizontal maps are one-to-one, by Lemma~\ref{le:injective-at-lambda}. Moreover, the one on right is an isomorphism, as can be verified easily. It follows that the map on the left is one-to-one. This justifies the following result.

\begin{lemma}
\label{le:eq-defect}
\pushQED{\qed}
The diagram above induces a natural surjective map of $\mco$-modules
\[
a_{\lambda}(M)\colon \cmod{\lambda}(A)^\mu \twoheadrightarrow \cmod{\lambda}(M)\,, \quad \text{where $\mu\colonequals \rank_{\lambda}(M)$.}
\]
In particular there is an equality
\[
\length_\mco\cmod{\lambda}(M) = \mu \cdot \length_\mco  \cmod{\lambda}(A) - \length_{\mco}\Ker(a_{\lambda}(M))\,.\qedhere
\]
\end{lemma}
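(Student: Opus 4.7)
The plan is to exploit the commutative square already constructed in the text together with the snake lemma. The two rows extend to short exact sequences by adjoining their cokernels, and I would identify these cokernels explicitly: by definition, the cokernel of the bottom row is $\cmod{\lambda}(M)$, and I claim the cokernel of the top row is $\cmod{\lambda}(A)^{\mu}$.

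For this identification, observe that the top row is obtained from the defining map $\ecoh^c_{\lambda}(A)\to \ecoh^c_{\lambda}(\mco)$ (whose cokernel is $\cmod{\lambda}(A)$) by tensoring over $\mco$ with $\tfree{(M/\fp_\lambda M)}$. Since $\mco$ is a discrete valuation ring, the finitely generated torsion-free $\mco$-module $\tfree{(M/\fp_\lambda M)}$ is free, and tensoring with a free module preserves cokernels. Its rank over $\mco$ equals $\dim_E (M\otimes_A E)$, where $E$ is the residue field of $A_{\fp_\lambda}$ (equivalently, the fraction field of $\mco$), which coincides with $\rank_{\lambda}(M)=\mu$. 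Hence the top cokernel is $\cmod{\lambda}(A)\otimes_{\mco}\mco^{\mu}\cong \cmod{\lambda}(A)^{\mu}$.

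Next, apply the snake lemma to the resulting pair of short exact sequences. Because the right vertical map is an isomorphism (by the K\"unneth identification in \S\ref{sec:pairing}), it has zero kernel and cokernel, so the snake sequence forces the induced map of cokernels $a_{\lambda}(M)\colon \cmod{\lambda}(A)^{\mu}\to \cmod{\lambda}(M)$ to be surjective (and, as a byproduct, identifies $\Ker(a_{\lambda}(M))$ with the cokernel of the left vertical map, though this is not needed for the statement).

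Finally, Theorem~\ref{th:acat} guarantees that $\cmod{\lambda}(A)$ is a torsion finitely generated $\mco$-module, hence of finite length, so the same holds for $\cmod{\lambda}(A)^{\mu}$ and for its quotient $\cmod{\lambda}(M)$. Additivity of length applied to the short exact sequence
\[
0\longrightarrow \Ker(a_{\lambda}(M))\longrightarrow \cmod{\lambda}(A)^{\mu}\xrightarrow{\ a_{\lambda}(M)\ } \cmod{\lambda}(M)\longrightarrow 0
\]
yields the displayed formula. The only non-formal point in the argument is the freeness and rank computation for $\tfree{(M/\fp_\lambda M)}$; everything else is either recorded in the text preceding the lemma or a direct application of the snake lemma.
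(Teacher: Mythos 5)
Your argument is correct and is essentially the paper's own: the lemma is justified there precisely by the preceding diagram (injective rows, right-hand vertical map an isomorphism), and passing to the cokernels of the rows via the snake lemma---which also identifies $\Ker(a_{\lambda}(M))$ with the cokernel of the left vertical map---is the intended, if unwritten, step. The one point to flag is that your identification $\rank_{\mco}\tfree{(M/\fp_\lambda M)}=\rank_{\lambda}(M)$ tacitly uses that $M_{\fp_\lambda}$ is free over the regular local ring $A_{\fp_\lambda}$, since $\dim_E(M\otimes_A E)$ is in general the minimal number of generators of $M_{\fp_\lambda}$, which can exceed its rank; this freeness holds under the depth hypotheses in force wherever the lemma is applied (e.g.\ $M$ maximal Cohen--Macaulay, or $\depth_AM\ge c+1$), and the paper makes the same silent identification.
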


When $A$ is Gorenstein and $M$ is maximal Cohen-Macaulay, $\Ker(a_{\lambda}(M))$ can be interpretted as a``stable" cohomology module of the pair $(A,M)$. This identification leads to the criterion below for detecting free summands of $M$; see \cite[Theorem~9.2]{Iyengar/Khare/Manning:2022a}.

\begin{theorem}
\label{th:freeness}
With notation as above, when $A$ Gorenstein and $M$ is maximal Cohen--Macaulay, $\length_\mco\cmod{\lambda}(M) = \mu \cdot \length_\mco  \cmod{\lambda}(A)$  if and only if 
		\[
		M\cong A^\mu \oplus W \quad\text{and $W_{\fp_\lambda}=0$,}
		\]
as $A$-modules.	In this case, when $\mu\ne 0$ the $A$-module $M$ is faithful. \qed
\end{theorem}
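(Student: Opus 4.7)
The plan is to reduce the length equality to a vanishing statement, identify the vanishing with the cokernel of a Künneth-type map via the snake lemma, and then invoke Gorenstein duality together with the MCM hypothesis to produce an $A$-linear splitting.

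First, by Lemma~\ref{le:eq-defect} the map $a_\lambda(M)\colon \cmod{\lambda}(A)^\mu\twoheadrightarrow \cmod{\lambda}(M)$ is already surjective, so the length equality is equivalent to $\Ker(a_\lambda(M))=0$. Applying the snake lemma to the commutative square preceding Lemma~\ref{le:eq-defect}---whose horizontal maps are injective by Lemma~\ref{le:injective-at-lambda} and whose right-hand vertical map is an isomorphism---identifies $\Ker(a_\lambda(M))$ with the cokernel of the left-hand vertical Künneth-type map
\[
\alpha\colon \ecoh^c_\lambda(A)\otimes_\mco \tfree{(M/\fp_\lambda M)}\longrightarrow \ecoh^c_\lambda(M)\,.
\]
Since $A$ is Gorenstein and regular at $\lambda$, and $M$ is MCM, the Auslander--Buchsbaum formula makes $M_{\fp_\lambda}$ free of rank $\mu$ over $A_{\fp_\lambda}$. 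I would then lift an $\mco$-basis of the free module $\tfree{(M/\fp_\lambda M)}$ to elements $m_1,\dots,m_\mu\in M$; by Nakayama these also give an $A_{\fp_\lambda}$-basis of $M_{\fp_\lambda}$. The resulting map $\phi\colon A^\mu\to M$ has kernel $K$ and cokernel $W$ with $K_{\fp_\lambda}=0=W_{\fp_\lambda}$, and $\alpha$ is naturally identified with $\ecoh^c_\lambda(\phi)$. The ``if'' direction of the theorem is then immediate: for $M\cong A^\mu\oplus W$ with $W_{\fp_\lambda}=0$ and $\phi$ the summand inclusion, $\ext^c_A(\mco,W)$ is $\mco$-torsion, so $\ecoh^c_\lambda(W)=0$, making $\ecoh^c_\lambda(\phi)$ an isomorphism of free $\mco$-modules.

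The main obstacle is the converse: deducing the decomposition from $\coker(\ecoh^c_\lambda(\phi))=0$. My plan here is to use the Gorenstein duality pairing~\eqref{eq:pairing2}, which under the MCM hypothesis realizes $\ecoh^c_\lambda(M)$ via a perfect $\mco$-valued pairing with $\ecoh^c_\lambda(M^\vee)$, where $M^\vee$ is again MCM. Dualizing, the vanishing of $\coker(\ecoh^c_\lambda(\phi))$ translates into the existence of an $A$-linear retraction $\psi\colon M\to A^\mu$ of $\phi$ (equivalently, the relevant class in a stable Ext group for MCM modules over the Gorenstein ring $A$ vanishes). The retraction $\psi$ then forces $K=0$ and gives $M\cong A^\mu\oplus W$ with $W=\coker\phi$ and $W_{\fp_\lambda}=0$. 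Finally, faithfulness when $\mu\ne 0$ is immediate, since any $a\in A$ annihilating $M$ annihilates the $A^\mu$-summand and hence vanishes.
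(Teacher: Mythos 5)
Your reduction of the length equality to $\Ker(a_\lambda(M))=0$, the snake-lemma identification of this kernel with $\coker(\alpha)$, the construction of $\phi\colon A^\mu\to M$ from a lift of a basis of $\tfree{(M/\fp_\lambda M)}$ (with $\phi_{\fp_\lambda}$ an isomorphism by Auslander--Buchsbaum), and the ``if'' direction are all correct and consistent with how the paper sets things up before deferring to \cite[Theorem~9.2]{Iyengar/Khare/Manning:2022a}.

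The converse, however, has a genuine gap, and it sits exactly at the heart of the theorem. Your key step rests on the claim that \eqref{eq:pairing2} is a \emph{perfect} $\mco$-valued pairing between $\ecoh^c_\lambda(M)$ and $\ecoh^c_\lambda(M^\vee)$. It is not: as explained in \ref{sec:pairing}, the adjoint of this pairing is precisely the map whose cokernel is $\cmod{\lambda}(M)$, and its image is the congruence ideal $\eta_\lambda(M)$. So the pairing is perfect only when $\cmod{\lambda}(M)=0$, which under your running hypothesis forces $\length_\mco\cmod{\lambda}(A)=0$, i.e.\ $A$ regular by Theorem~\ref{th:regular} --- the trivial case. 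In general the deviation of \eqref{eq:pairing2} from perfection \emph{is} the invariant being studied, so ``dualizing'' cannot simply convert $\coker(\ecoh^c_\lambda(\phi))=0$ into an $A$-linear retraction of $\phi$. That implication --- that surjectivity of $\ext^c_A(\mco,\phi)$ modulo $\mco$-torsion forces $\phi$ to split off a free summand --- is the actual content of the generalized Wiles--Lenstra--Diamond criterion; note that a retraction is equivalent to surjectivity of $\Hom_A(\phi,A)\colon \Hom_A(M,A)\to A^\mu$, which is a statement at the maximal ideal of $A$, whereas your hypothesis only controls $\ext^c_A(\mco,-)$ at $\lambda$ up to torsion. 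The paper's route (via the cited reference) is to identify $\Ker(a_\lambda(M))$ with (the torsion-free part of) a stable cohomology module of the pair $(A,M)$ and then invoke the fact that vanishing of stable cohomology over a Gorenstein ring detects free summands of maximal Cohen--Macaulay modules; your parenthetical ``the relevant class in a stable Ext group vanishes'' points at this but does not identify the group, the class, or why its vanishing follows from your hypothesis, so the essential step remains unproved.
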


\subsection*{Isomorphism criteria}
The preceding results leads to a criterion for detecting isomorphisms between rings, in terms of congruence modules. 

\begin{proposition}
\label{pr:iso-criteria}
Let $\vf\colon A\to B$ be a surjective map of complete local $\mco$-algebra. Assume there exists an augmentation $\lambda\colon B\to\mco$ such that 
$(A,\lambda\vf)$ and $(B,\lambda)$ are in $\acat(c)$ for some $c\ge 0$, and either of the following conditions hold:
\begin{enumerate}[\quad\rm(1)]
\item
The ring $A$ is Gorenstein, $B$ is Cohen-Macaulay, and
\[
\length_{\mco}\cmod{\lambda\vf}(A) = \length_{\mco}\cmod{\lambda}(B)\,;
\]
\item
The ring $B$ is complete intersection and
\[
\length_{\mco}\con{\lambda\vf}(A) = \length_{\mco}\con{\lambda}(B)\,.
\]
\end{enumerate}
Then the map $\vf$ is an isomorphism.
\end{proposition}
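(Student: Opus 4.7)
The plan is to reduce both cases to the conclusion that $\varphi$ exhibits $B$ as containing $A$ as an $A$-module direct summand; cyclicity of $B$ as an $A$-module then forces the complement to vanish, so $\varphi$ is an isomorphism.

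For~(1), view $B$ as an $A$-module via $\varphi$ and note that $B$ is maximal Cohen--Macaulay over $A$: since both $A$ (Gorenstein, hence Cohen--Macaulay) and $B$ have $\mco$ as the residue ring at their augmentations and $\mco$ has Krull dimension one, the Cohen--Macaulay identity $\dim X = \height\fp + \dim X/\fp$ gives $\dim A = c+1 = \dim B$. Localizing $\varphi$ at $\fp_{\lambda\varphi}$ produces a surjection of regular local rings of dimension $c$, hence an isomorphism, so $\rank_{\lambda\varphi}(B)=1$. Theorem~\ref{th:invariance-of-domain} identifies $\cmod{\lambda\varphi}(B)\cong \cmod{\lambda}(B)$ as $\mco$-modules, and Lemma~\ref{le:eq-defect} yields a surjection
\[
a_{\lambda\varphi}(B)\colon \cmod{\lambda\varphi}(A)\twoheadrightarrow \cmod{\lambda\varphi}(B)\cong \cmod{\lambda}(B).
\]
Hypothesis~(1) forces $\Ker(a_{\lambda\varphi}(B))=0$, so Theorem~\ref{th:freeness} yields a decomposition $B\cong A\oplus W$ of $A$-modules with $W_{\fp_{\lambda\varphi}}=0$. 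Since $\Ker(\varphi)=\mathrm{ann}_A(B)=\mathrm{ann}_A(A)\cap\mathrm{ann}_A(W)=0$, we conclude $\varphi$ is an isomorphism.

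For~(2), I reduce to~(1) via the higher-codimension numerical criterion from~\cite{Iyengar/Khare/Manning:2022a}. The surjection $\varphi$ induces a surjection $\fp_{\lambda\varphi}/\fp_{\lambda\varphi}^2 \twoheadrightarrow \fp_\lambda/\fp_\lambda^2$ of finitely generated $\mco$-modules of common rank $c$ (Theorem~\ref{th:acat}); a standard snake-lemma argument then shows the induced map $\con{\lambda\varphi}(A)\twoheadrightarrow \con{\lambda}(B)$ on torsion parts is also surjective. The numerical criterion asserts, for $(X,\mu)$ in $\acat(c)$, the inequality $\length_\mco\cmod{\mu}(X)\le \length_\mco\con{\mu}(X)$, with equality if and only if $X$ is complete intersection. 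Applied to $B$, this yields $\length_\mco\cmod{\lambda}(B)=\length_\mco\con{\lambda}(B)$. Combining this with hypothesis~(2), the general inequality for $A$, and the Lemma~\ref{le:eq-defect} surjection $\cmod{\lambda\varphi}(A)\twoheadrightarrow \cmod{\lambda}(B)$ from the proof of~(1), the chain
\[
\length_\mco\cmod{\lambda}(B)\le \length_\mco\cmod{\lambda\varphi}(A)\le \length_\mco\con{\lambda\varphi}(A)=\length_\mco\con{\lambda}(B)=\length_\mco\cmod{\lambda}(B)
\]
collapses to equalities. The equality $\length_\mco\cmod{\lambda\varphi}(A)=\length_\mco\con{\lambda\varphi}(A)$ forces $A$ to be complete intersection, hence Gorenstein; and the equality $\length_\mco\cmod{\lambda\varphi}(A)=\length_\mco\cmod{\lambda}(B)$ places us in the hypothesis of~(1).

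The main obstacle is verifying that Theorem~\ref{th:freeness} genuinely applies, which requires $B$ to be maximal Cohen--Macaulay over $A$, i.e. the dimension equality $\dim A=\dim B$. The Cohen--Macaulay hypotheses on both rings, together with $A/\fp_{\lambda\varphi}=\mco=B/\fp_\lambda$ of Krull dimension one, secure this. For~(2), the remaining subtlety is that the hypothesis constrains cotangent modules whereas Theorem~\ref{th:freeness} needs control over congruence modules; the numerical criterion is precisely what bridges this gap.
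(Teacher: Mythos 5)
Your proof of part~(1) is correct and is essentially the paper's own argument: localize at $\fp_{\lambda\vf}$ to get $\rank_{\lambda\vf}(B)=1$, invoke invariance of domain and Lemma~\ref{le:eq-defect} to turn the length hypothesis into the equality $\length_\mco\cmod{\lambda\vf}(B)=\rank_{\lambda\vf}(B)\cdot\length_\mco\cmod{\lambda\vf}(A)$, and conclude from Theorem~\ref{th:freeness} that $B$ is a faithful $A$-module. Your verification that $B$ is maximal Cohen--Macaulay over $A$ (via $\dim A=c+1=\dim B$) is a worthwhile detail that the paper leaves implicit.

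Part~(2) is where you diverge from the paper, and your route has a genuine gap. The paper proves (2) directly, by a Jacobi--Zariski plus Nakayama argument on cotangent modules (citing \cite[Lemma~5.10]{Iyengar/Khare/Manning:2022a}), which uses no depth or Cohen--Macaulay hypothesis on $A$. You instead reduce to (1) via the ``numerical criterion'' $\length_\mco\cmod{\mu}(X)\le\length_\mco\con{\mu}(X)$, with equality iff $X$ is complete intersection. But that statement is Theorem~\ref{th:ci-property}, which carries the hypothesis $\depth X\ge c+1$; the paper explicitly flags that most such results require a depth assumption, and the hypothesis is not decorative. For instance, $X=\mco\pos{x}/(x^2,\varpi x)$ with the augmentation killing $x$ lies in $\acat(0)$, has $\depth X=0$, satisfies $\length_\mco\cmod{}(X)=1=\length_\mco\con{}(X)$, and is not complete intersection. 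In case~(2) of the proposition nothing is assumed about $A$ beyond $(A,\lambda\vf)\in\acat(c)$, so both links in your chain that involve $A$ --- the inequality $\length_\mco\cmod{\lambda\vf}(A)\le\length_\mco\con{\lambda\vf}(A)$ and the implication ``equality forces $A$ complete intersection'' --- are unjustified. Your argument therefore proves (2) only under an additional hypothesis such as $\depth A\ge c+1$, not the statement as given. (The remaining links are fine: the surjection of Lemma~\ref{le:eq-defect} and the identification $\cmod{\lambda\vf}(B)\cong\cmod{\lambda}(B)$ need depth control only on $B$, which the complete intersection hypothesis supplies.) To repair this without the extra hypothesis you would need an argument that never passes through the Wiles defect of $A$; the conormal/Jacobi--Zariski approach of the paper is one such.
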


\begin{proof}
(1) The hypotheses imply that $\vf$ is an isomorphism at $\lambda\vf$ so $\rank_{\lambda\vf}B=1$. Thus Theorem~\ref{th:freeness} implies that $B$ is a faithful $A$-module, so $\Ker\vf=(0)$.

(2) is an simple argument using the Jacobi-Zariski sequence arising from maps $A\to B\to \mco$ and Nakayama's Lemma; see \cite[Lemma~5.10]{Iyengar/Khare/Manning:2022a} for details.
\end{proof}

The isomorphism \eqref{eq:ecoh-top} is also a critical input in tracking the behavior of congruence modules under deformations.

\subsection*{Deformations}\label{sec:def}
Fix $(A,\lambda)$  in $\acat(c)$ and elements $\bs{f}\colonequals f_1,\dots,f_n$ in $\fp_{\lambda}$ such that 
their residue classes in the $\mco$-module $\fp_\lambda/\fp_\lambda^2$ form a linearly independent set. Set $\overline{A}\colonequals A/\bs{f}A$. The augmentation $\lambda\colon A\to \mco$ factors through $\overline{A}$ so we an augmentation $\overline{\lambda} \colon \overline{A}\to \mco$. The hypotheses on $\bs{f}$ is equivalent to saying that the pair $(\overline{A},\overline{\lambda})$ is in $\acat(c-n)$; see \cite[\S8]{Iyengar/Khare/Manning:2022a}. A straightforward computation yields an equality
\begin{equation}
\label{eq:deformation-con}
    \length_{\mco}\con{\overline{\lambda}}(\overline{A}) = \length_{\mco}\con{\lambda}(A) + \sum_i\ord(f_i)\,,
\end{equation}
where $\ord(f_i)$ is the order of $f_i$ in $\fp_\lambda/\fp_\lambda^2$, defined by
\[
(\varpi^{\ord(f_i)})\mco = \{\alpha(f_i)| \alpha\in \Hom_{\mco}(\fp_\lambda/\fp_\lambda^2,\mco)\};
\]
 see \cite[\S8.5]{Iyengar/Khare/Manning:2022a}.

\begin{theorem}
\label{th:deformation}
In the context above, let $M$ be a finitely generated $A$-module with $\depth_AM\ge c+1$ and set $\overline{M}\colonequals M/\bs{f}M$. If $\bs{f}$ is $M$-regular, then
    \[
\length_{\mco}\cmod{\overline{\lambda}}(\overline{M}) = \length_{\mco}\cmod{\lambda}(M) + (\rank_\lambda M)\sum_i\ord(f_i)\,.
\]
\end{theorem}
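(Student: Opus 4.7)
The plan is to induct on $n$, reducing to $n=1$. For the inductive step, one applies the theorem (with $n-1$ in place of $n$) to the pair $(A/f_1A,\lambda')$ and to the sequence $f_2,\dots,f_n$ acting on $M/f_1M$. This requires checking that the pair lies in $\acat(c-1)$ (which is part of the setup in \S\ref{sec:def}), that the depth and $M$-regularity hypotheses pass to the quotient, and---via a Smith-normal-form argument using the $\mco$-linear independence of the residues of the $f_i$---that $\ord(f_i)$ computed in $\fp_{\lambda'}/\fp_{\lambda'}^2$ agrees with the value in $\fp_\lambda/\fp_\lambda^2$ for $i\ge 2$. It therefore suffices to handle $n=1$; I write $f\colonequals f_1$ and $a\colonequals\ord(f)$ for the remainder.

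For $n=1$, use $\overline M/\fp_{\overline\lambda}\overline M = M/\fp_\lambda M$ to place both congruence modules into short exact sequences of $\mco$-modules via Lemma~\ref{le:injective-at-lambda}:
\begin{gather*}
0 \longrightarrow \ecoh^c_\lambda(M) \longrightarrow \ecoh^c_\lambda(\mco)\otimes_\mco \tfree{(M/\fp_\lambda M)} \longrightarrow \cmod{\lambda}(M) \longrightarrow 0,\\
0 \longrightarrow \ecoh^{c-1}_{\overline\lambda}(\overline M) \longrightarrow \ecoh^{c-1}_{\overline\lambda}(\mco)\otimes_\mco \tfree{(M/\fp_\lambda M)} \longrightarrow \cmod{\overline\lambda}(\overline M) \longrightarrow 0.
\end{gather*}
The goal is to produce a morphism from the first short exact sequence to the second with both vertical maps injective, and to extract the desired length identity from the snake lemma.

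The left vertical arrow, which I will show is an isomorphism, comes from the long exact sequence on $\ext_A^*(\mco,-)$ attached to $0\to M\xrightarrow{f} M\to\overline M\to 0$: because $f\in\fp_\lambda$ acts as zero on each $\ext_A^i(\mco,-)$ and because $\ext_A^{c-1}(\mco,M)=0$ (deduced from $\depth_AM\ge c+1$ together with the exact sequence \eqref{eq:residue}), the connecting homomorphism is an isomorphism $\ext_A^{c-1}(\mco,\overline M)\xrightarrow{\sim}\ext_A^c(\mco,M)$; invoking the change-of-rings spectral sequence for $A\to\overline A$ and the inequality $\depth_{\overline A}\overline M\ge c$ identifies the source with $\ext_{\overline A}^{c-1}(\mco,\overline M)$. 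The right vertical arrow arises from the kernel/quotient data of the cotangent map: dualising the short exact sequence $0\to \mco\cdot\bar f\to \fp_\lambda/\fp_\lambda^2\to \fp_{\overline\lambda}/\fp_{\overline\lambda}^2\to 0$ produces
\[
0\longrightarrow \Hom_\mco(\fp_{\overline\lambda}/\fp_{\overline\lambda}^2,\mco) \longrightarrow \Hom_\mco(\fp_\lambda/\fp_\lambda^2,\mco) \xrightarrow{\alpha\mapsto\alpha(\bar f)} \varpi^a\mco \longrightarrow 0,
\]
whose terms are all free $\mco$-modules of finite rank. Taking top exterior powers and applying \eqref{eq:ecoh-top} realises the natural inclusion $\ecoh^c_\lambda(\mco)\hookrightarrow \ecoh^{c-1}_{\overline\lambda}(\mco)$ as the embedding $\ecoh^{c-1}_{\overline\lambda}(\mco)\otimes\varpi^a\mco\hookrightarrow \ecoh^{c-1}_{\overline\lambda}(\mco)\otimes\mco$, with cokernel of length $a$.

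Granting commutativity of the resulting square, the snake lemma produces the short exact sequence
\[
0\longrightarrow \cmod{\lambda}(M)\longrightarrow \cmod{\overline\lambda}(\overline M)\longrightarrow (\mco/\varpi^a\mco)^{\rank_\lambda M}\longrightarrow 0,
\]
in which the $\rank_\lambda M$ factor arises from tensoring the right vertical cokernel with $\tfree{(M/\fp_\lambda M)}$; comparing $\mco$-lengths yields the formula. The main obstacle is verifying commutativity of the square: one must trace the Yoneda description of the pairing recalled in \S\ref{sec:pairing} so that the connecting homomorphism on the source side is compatible with the exterior-algebra inclusion on the target side. The parallel identity \eqref{eq:deformation-con} for the $\con{}$-invariant is a guide to the bookkeeping.
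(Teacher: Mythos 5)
Your overall strategy is the paper's own: reduce to $n=1$, identify $\ext^{c}_A(\mco,M)$ with $\ext^{c-1}_{\overline A}(\mco,\overline M)$, identify the map $\ecoh^c_\lambda(\mco)\to\ecoh^{c-1}_{\overline\lambda}(\mco)$ via the exterior-power isomorphism \eqref{eq:ecoh-top} applied to the dual of the cotangent sequence (whose cokernel is the order ideal of $f$), and extract the length formula from the resulting commutative square; your snake-lemma packaging is just the adjoint form of the diagram of pairings in the paper's sketch, and both arguments defer the same commutativity verification.

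There is, however, one concrete gap. Your identification of the source of the connecting map with $\ext^{c-1}_{\overline A}(\mco,\overline M)$ — whether phrased via the change-of-rings spectral sequence for $A\to\overline A$ or via the Rees isomorphism $\ext^{c}_A(\mco,M)\cong\ext^{c-1}_{\overline A}(\mco,\overline M)$ — requires $f$ to be a nonzerodivisor on $A$, not merely on $M$: one needs $\overline A$ to have the two-term free resolution $0\to A\xrightarrow{f}A\to\overline A\to 0$ so that $\ext^q_A(\overline A,-)$ vanishes for $q\ge 2$. The hypotheses of the theorem only give that $\bs f$ is $M$-regular and that its residues are linearly independent in $\fp_\lambda/\fp_\lambda^2$; $f$ may well be a zerodivisor on $A$. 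The paper deals with this by first replacing $A$ by a quotient on which $f$ is regular, using the invariance-of-domain property (Theorem~\ref{th:invariance-of-domain}), which leaves both congruence modules unchanged because $\depth_AM\ge c+1$. Your proposal omits this reduction entirely, and without it the change-of-rings step is unjustified. A smaller point: in the inductive reduction to $n=1$, the claim that $\ord(f_i)$ is unchanged when recomputed in $\fp_{\lambda'}/\fp_{\lambda'}^2$ is more delicate than a Smith-normal-form remark suggests — for a general $\mco$-linearly independent set the orders of the individual generators are not preserved under passing to the quotient by $\mco\bar f_1$ (they can increase), so one must either normalize the generating sequence first or track the total torsion of the cokernel rather than the individual orders. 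The remaining unproved step you flag, the commutativity of the square relating the connecting homomorphism to $\wedge^{c-1}$ of the dual cotangent inclusion, is precisely what the paper also leaves to \cite[Proof of Theorem~8.2]{Iyengar/Khare/Manning:2022a}, so you are no worse off there.
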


\begin{proof}[Sketch of proof]
It is enough to consider the case when $n=1$. One first reduces to the case when $f$ is not a zerodivisor on $A$ as well; this uses the invariance of domain property for congruence modules, Theorem~\ref{th:invariance-of-domain}. See \cite[Proof of Theorem~8.2]{Iyengar/Khare/Manning:2022a} for details. The essence of the argument is captured $c=1$, so we start with a sketch of the proof in that context.  Since $f$ is in $\fp_\lambda$, and it is not a zerodivisor on $A$ nor on $M$ one gets the isomorphism on the right:
\[
\coker(M\to \Hom(\fp_\lambda,M)) \xrightarrow{ \cong\ } \ext^1_A(\mco,M) \xrightarrow{\ \cong\ } \Hom_{A_0}(\mco, M_0)
\]
The one on the left is from Lemma \ref{le:c=1}. It is straightforward to check that the composite isomorphism is induced by the assignment
\[
\alpha \mapsto -\alpha(f) \mod fM \quad\text{for $\alpha\in \Hom_A(\fp_\lambda,M)$.}
\]
Consider the commutative diagram
\[  
\begin{tikzcd}
\Hom_A(\fp_\lambda,M)\arrow[d, shift right = 2.5ex, "\alpha \mapsto \alpha(f)" swap, "\cong"] \otimes_A
        \Hom_A(M,\mco)\arrow[d, shift left = 8.0ex, "\cong"] \arrow[r] & \Hom_{\mco}(\fp_\lambda/\fp_\lambda^2,\mco)\arrow[d,"\beta \mapsto \beta(f)"] \\
       \phantom{needspace} \overline{M}[\fp_\lambda] \otimes_A  \Hom_{\overline{A}}(\overline{M},\mco)  \arrow[r] &\mco
\end{tikzcd}
\]
The image of the vertical map on the right is precisely the order ideal of $f$, that is to say, $(\varpi^{\ord(f)})$. Since $\cmod{\lambda}(M)$ and $\cmod{\overline\lambda}(\overline M)$ are cokernel of the maps adjoint to the top and bottom row, respectively, the desired equality follows.

To tackle the general case where $c\ge 2$,  consider a commutative diagram analogous to the one above:
\[  
\begin{tikzcd}
\ext^c_A(\mco,M) \arrow[d, shift left = 8.0ex, "\cong"] \otimes_A
        \Hom_A(M,\mco)\arrow[d, shift right = 8.0ex, "\cong"] \arrow[r] 
                        & \ext^c_A(\mco,\mco) \arrow[d] \arrow[r, twoheadrightarrow]
                        	&\ecoh^{c}_{\lambda}(\mco)  \arrow[d]  \\
        \ext^{c-1}_{\overline A}(\mco,\overline M)\otimes_A  \Hom_{\overline A}(\overline M,\mco)  \arrow[r] 
        	& \ext^{c-1}_{\overline A}(\mco,\mco) \arrow[r, twoheadrightarrow]
                        	&\ecoh^{c-1}_{\overline\lambda}(\mco) 
\end{tikzcd}
\]
The isomorphism on the left is by~\cite[Lemma~1.2.4]{Bruns/Herzog:1998}. What is left is to identify the vertical map on the right, and this exploits the isomorphism~\eqref{eq:ecoh-top}.
\end{proof}

\subsection*{Wiles defect}
Fix a pair $(A,\lambda)$  in $\acat$ and a finitely generated $A$-module $M$.  Since $A$ is regular at $\fp_\lambda$, and in particular a domain, the $A_{\fp_\lambda}$-module $M_{\fp_\lambda}$ has a rank. The \emph{Wiles defect} of $M$ at $\lambda$ is the integer
	\[
	\delta_\lambda(M)\colonequals \rank_{\lambda}(M) \cdot \length_{\mco}\con{\lambda}(A) -  \length_{\mco}\cmod{\lambda}(M)\,.
	\]
In particular the Wiles defect of $A$ at $\lambda$ is 
	\[
	\length_{\mco}\con{\lambda}(A) - \length_{\mco}\cmod{\lambda}(A)\,.
	\]
	We refer to \cite[Introduction]{Iyengar/Khare/Manning:2022a} for a discussion on precedents to this definition. 

Theorem~\ref{th:deformation} and \eqref{eq:deformation-con} give the following result, which is \cite[Theorem~8.2]{Iyengar/Khare/Manning:2022a}:

\begin{theorem}
	\label{th:deformation-defect}
	\pushQED{\qed}
	One has $\delta_{\lambda}(M/\bs{f}M)=\delta_{\lambda}(M)$ for $M, \bs f$ as in Theorem~\ref{th:deformation}. \qedhere
\end{theorem}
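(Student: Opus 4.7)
The plan is to deduce this directly from Theorem~\ref{th:deformation} and the length formula~\eqref{eq:deformation-con}, by substituting both into the definition of $\delta$. The only ingredient not already packaged in those two formulas is the invariance of the generic rank, namely $\rank_{\overline\lambda}(\overline M) = \rank_\lambda(M)$, where $\overline\lambda\colon \overline A\to\mco$ denotes the factored augmentation. Once this is in hand the conclusion is a single arithmetic cancellation.

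First I would establish the rank equality. Since the residue classes of $f_1,\dots,f_n$ are $\mco$-linearly independent in $\fp_\lambda/\fp_\lambda^2$, they form part of a regular system of parameters of the regular local ring $A_{\fp_\lambda}$. Consequently $\overline A_{\fp_{\overline\lambda}} = A_{\fp_\lambda}/\bs{f}A_{\fp_\lambda}$ is again a regular local domain, of dimension $c-n$, and $\overline M_{\fp_{\overline\lambda}} \cong M_{\fp_\lambda}/\bs f M_{\fp_\lambda}$. The $M$-regularity of $\bs f$ is preserved under the flat base change $A\to A_{\fp_\lambda}$, so $\mathrm{Tor}^{A_{\fp_\lambda}}_i(M_{\fp_\lambda},\overline A_{\fp_{\overline\lambda}})=0$ for $i>0$; hence tensoring a finite free resolution of $M_{\fp_\lambda}$ with $\overline A_{\fp_{\overline\lambda}}$ yields a finite free resolution of $\overline M_{\fp_{\overline\lambda}}$. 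Computing the rank as the Euler characteristic of the resolution shows it is unchanged.

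It remains to substitute. Writing $r\colonequals \rank_\lambda M = \rank_{\overline\lambda}\overline M$ and $s\colonequals \sum_i \ord(f_i)$, I combine \eqref{eq:deformation-con} and Theorem~\ref{th:deformation} to obtain
\[
\delta_{\overline\lambda}(\overline M) = r\bigl(\length_\mco\con{\lambda}(A)+s\bigr) - \bigl(\length_\mco\cmod{\lambda}(M) + r\cdot s\bigr) = \delta_\lambda(M),
\]
as required.

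The serious content is already packaged into Theorem~\ref{th:deformation} (whose proof rests on the structural isomorphism~\eqref{eq:ecoh-top}) and \eqref{eq:deformation-con}; there is no genuine obstacle in the remaining argument, only the Euler-characteristic bookkeeping for rank invariance together with the displayed cancellation.
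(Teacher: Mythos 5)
Your proof is correct and follows the same route as the paper, which obtains the theorem directly by combining Theorem~\ref{th:deformation} with \eqref{eq:deformation-con} in the definition of the Wiles defect. The only detail you add — the Euler-characteristic argument that $\rank_{\overline\lambda}(\overline M)=\rank_\lambda(M)$ — is a correct justification of a step the paper leaves implicit.
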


Also, Theorem~\ref{th:invariance-of-domain} implies the following (which is \cite[Theorem~7.4]{Iyengar/Khare/Manning:2022a}):

\begin{lemma}
	\label{le:defect-invariance}
	If $\vf\colon (A',\lambda')\to (A,\lambda)$ is a surjective map in $\acat(c)$, then
	\[
	\delta_{\lambda'}(M) \ge \delta_{\lambda}(M)
	\]
	with equality if and only if $\con{\lambda'}(A')\cong\con{\lambda}(A)$ holds. \qed
\end{lemma}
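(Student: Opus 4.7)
The plan is to use Theorem~\ref{th:invariance-of-domain} to eliminate the congruence-module contribution from the defect and reduce everything to comparing $\con{\lambda'}(A')$ with $\con{\lambda}(A)$. Since $\vf$ is a surjection in $\acat(c)$, its localization $A'_{\fp_{\lambda'}} \to A_{\fp_{\lambda}}$ is a surjection between regular local rings of the same dimension $c$, hence an isomorphism (as noted right after Theorem~\ref{th:invariance-of-domain}). In particular $\rank_{\lambda'}(M)=\rank_\lambda(M)$, and Theorem~\ref{th:invariance-of-domain} supplies a natural isomorphism $\cmod{\lambda'}(M)\cong \cmod{\lambda}(M)$. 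Subtracting the defining formulas for the two Wiles defects then yields
\[
\delta_{\lambda'}(M) - \delta_\lambda(M) \;=\; \rank_\lambda(M)\cdot \bigl(\length_\mco \con{\lambda'}(A') - \length_\mco \con{\lambda}(A)\bigr),
\]
so the whole statement reduces to producing a natural surjection $\con{\lambda'}(A') \twoheadrightarrow \con{\lambda}(A)$.

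For this I would exploit the commutative diagram of short exact sequences splitting off the torsion part of the cotangent module:
\[
\begin{tikzcd}
0 \arrow[r] & \con{\lambda'}(A') \arrow[r]\arrow[d] & \fp_{\lambda'}/\fp_{\lambda'}^2 \arrow[r]\arrow[d] & \tfree{(\fp_{\lambda'}/\fp_{\lambda'}^2)} \arrow[r]\arrow[d] & 0 \\
0 \arrow[r] & \con{\lambda}(A) \arrow[r] & \fp_\lambda/\fp_\lambda^2 \arrow[r] & \tfree{(\fp_\lambda/\fp_\lambda^2)} \arrow[r] & 0
\end{tikzcd}
\]
The middle vertical map is surjective because $\vf$ is, so its induced map on torsion-free quotients (the right-hand vertical arrow) is a surjection between free $\mco$-modules, each of rank $c$ by the definition of $\acat(c)$. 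Over the discrete valuation ring $\mco$, any surjection between free modules of equal finite rank is an isomorphism (the kernel is a free submodule whose rank must vanish for rank reasons). Hence the right vertical arrow is an isomorphism, and the snake lemma forces the left vertical arrow to be surjective as well.

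Combining these observations, the displayed formula above gives $\delta_{\lambda'}(M)\ge \delta_\lambda(M)$, and equality is equivalent to the surjection $\con{\lambda'}(A')\twoheadrightarrow \con{\lambda}(A)$ being an isomorphism, which is the stated condition. The only substantive step is the invariance-of-domain reduction, which has already been established; the remainder is just the linear-algebra observation that a surjection between free $\mco$-modules of equal rank is an isomorphism, together with a diagram chase. The mild subtlety worth flagging is that the equality direction presumes $\rank_\lambda(M)\ne 0$, which is the situation of interest; otherwise both sides of the defect identity reduce to $-\length_\mco\cmod{\lambda}(M)$ and the conclusion holds trivially via Theorem~\ref{th:invariance-of-domain}.
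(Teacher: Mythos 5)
Your proof is correct and follows the same route the paper intends: Theorem~\ref{th:invariance-of-domain} cancels the $\cmod{\lambda}(M)$ terms (and the localized isomorphism $A'_{\fp_{\lambda'}}\to A_{\fp_\lambda}$ equates the ranks), reducing everything to the natural surjection $\con{\lambda'}(A')\twoheadrightarrow\con{\lambda}(A)$ obtained from the cotangent modules, which you establish correctly via the snake lemma and the fact that a surjection of free $\mco$-modules of equal rank $c$ is an isomorphism. Your caveat about $\rank_\lambda(M)=0$ is a genuine (if minor) edge case for the ``only if'' direction that the paper leaves implicit.
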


With $a_{\lambda}(M)$ as in  \eqref{eq:defect-formula}, one gets a ``defect formula":
\begin{equation}
\label{eq:defect-formula}
\delta_{\lambda}(M) = \rank_{\lambda}(M) \cdot \delta_{\lambda}(A) + \length_{\mco}\Ker(a_{\lambda}(M))\,.
\end{equation}
In particular $\delta_{\lambda}(M)\ge 0$ for all $M$ if and only if $\delta_{\lambda}(A)\ge 0$.

 \begin{theorem}
 \label{th:ci-property}
When $(A,\lambda)\in \acat(c)$ with $\depth A\ge c+1$ one has $\delta_{\lambda}(A)\ge 0$, and equality holds if and only if $A$ is complete intersection.
\end{theorem}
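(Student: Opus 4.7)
The plan is to proceed by induction on the codimension $c$, with the base case $c = 0$ being the classical Wiles-Lenstra-Diamond numerical criterion. When $c = 0$, the ring $A$ is one-dimensional Cohen-Macaulay and complete local, hence finite flat over $\mco$; the invariant $\con\lambda(A)$ is all of $\fp_\lambda/\fp_\lambda^2$ (which has rank $0$, so is entirely torsion), while $\cmod\lambda(A)$ unfolds to $\mco/\eta_\lambda$, where $\eta_\lambda = \lambda(A[\fp_\lambda])$ is the classical congruence ideal. The statement $\length_\mco \mco/\eta_\lambda \le \length_\mco \fp_\lambda/\fp_\lambda^2$, together with equality iff $A$ is complete intersection, is then precisely the Wiles-Lenstra-Diamond theorem of \cite{Wiles:1995,Diamond:1997}.

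For the inductive step, assume $c \ge 1$. I would first produce a single element $f \in \fp_\lambda$ that is $A$-regular and whose image in $\fp_\lambda/\fp_\lambda^2$ has non-zero projection to the torsion-free quotient. The minimal primes of $A$ contained in $\fp_\lambda$ correspond, via localization, to minimal primes of the regular (hence domain) local ring $A_{\fp_\lambda}$, so there are finitely many and each is strictly contained in $\fp_\lambda$. Letting $T \subseteq \fp_\lambda$ denote the preimage of $\tors(\fp_\lambda/\fp_\lambda^2)$, the $A$-ideal $\fp_\lambda^2 + T$ is strictly contained in $\fp_\lambda$, because the torsion-free quotient of the cotangent module has rank $c \ge 1$. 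A prime-avoidance argument (tolerating the one non-prime ideal $\fp_\lambda^2 + T$ in the list) supplies the desired $f$.

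Setting $\overline A := A/(f)$ with induced augmentation $\overline\lambda$, the discussion in \cite[\S 8]{Iyengar/Khare/Manning:2022a} places $(\overline A, \overline\lambda)$ in $\acat(c-1)$, and $\depth \overline A \ge c$ since $f$ is $A$-regular. Applying Theorem~\ref{th:deformation-defect} with $M = A$ (so $\rank_\lambda A = 1$) yields $\delta_{\overline\lambda}(\overline A) = \delta_\lambda(A)$, and since $f$ is $A$-regular, $\overline A$ is complete intersection if and only if $A$ is (a standard property of regular sequences). The induction hypothesis applied to $(\overline A, \overline\lambda)$ then closes both assertions. The main technical point is the simultaneous prime-avoidance construction of $f$; once this is in hand, the deformation invariance of the Wiles defect and the classical $c = 0$ criterion carry the rest of the proof.
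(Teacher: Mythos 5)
Your argument is correct in outline, but it is not the route this paper takes: you reduce to codimension $0$ by repeatedly dividing by a well-chosen nonzerodivisor and invoking the deformation invariance of the defect (Theorem~\ref{th:deformation-defect}), which is precisely the strategy of \cite{Iyengar/Khare/Manning:2022a} that the paper explicitly sets aside in favour of an alternative. The paper instead ``goes up'': assuming $A$ is Cohen--Macaulay of dimension $c+1$, it chooses a surjection $\eps\colon C\to A$ from a complete intersection $C$ in $\acat(c)$ inducing $\con{\lambda\eps}(C)\cong\con{\lambda}(A)$, shows $\delta_{\lambda\eps}(C)=0$ (using Theorem~\ref{th:deformation} once, to compare $C$ with the regular ring $\mco\pos{\bs t}$), and then deduces from $\delta_\lambda(A)=0$, via the invariance-of-domain property (Theorem~\ref{th:invariance-of-domain}) and the isomorphism criterion (Proposition~\ref{pr:iso-criteria}), that $\eps$ is an isomorphism. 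Your descent covers the general hypothesis $\depth A\ge c+1$ (the paper's alternative argument is restricted to the Cohen--Macaulay case), at the price of needing the full strength of the $c=0$ numerical criterion; the paper's ascent trades that for the structural inputs just named.

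Two points in your write-up need repair. First, the element $f$ must avoid \emph{all} associated primes of $A$, not merely the minimal primes contained in $\fp_\lambda$; avoiding the latter does not make $f$ a nonzerodivisor. The fix is at hand: if some $\fq\in\mathrm{Ass}(A)$ contained $\fp_\lambda$, then $\depth A\le\dim A/\fq\le\dim A/\fp_\lambda=1$, contradicting $\depth A\ge c+1\ge 2$; hence $\fp_\lambda\not\subseteq\fq$ for every associated prime, and prime avoidance against $\mathrm{Ass}(A)$ together with the single non-prime ideal $\fp_\lambda^2+T$ produces the desired $f$. Second, the base case is misstated: for $(A,\lambda)\in\acat(0)$ with $\depth A\ge 1$ the ring $A$ need be neither one-dimensional nor finite over $\mco$ (for instance $\mco\pos{x}/(\varpi x)$ is one-dimensional Cohen--Macaulay but not module-finite over $\mco$), so the literal statements of \cite{Wiles:1995,Diamond:1997} do not apply; what you need is the extension of the Wiles--Lenstra--Diamond criterion to augmented $\mco$-algebras with $\depth A\ge1$ proved in \cite{Iyengar/Khare/Manning:2022a}. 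With those two adjustments the induction closes as you describe.
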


In \cite{Iyengar/Khare/Manning:2022a} this result was proved by reduction to the case $c=0$, using  Theorem~\ref{th:deformation}.  Here is an alternative argument, under the slightly more restrictive case where $A$ is Cohen-Macaulay (so $\dim A = c+1$), that argues by ``going up" to a regular ring. 

\begin{proof}
First we verify that $\delta_{\lambda}(A)=0$ when $A\in \acat(c)$ is complete intersection, that is to say, isomorphic to
\[
\mco\pos{t_1,\dots,t_n}/(f_1,\dots,f_m)
\]
for some regular sequence $\bs{f}\colonequals f_1,\dots,f_m$ in $(\bs{t})$. Since $A$ is in $\acat(c)$ it follows that $n-m=c$ and that $\bs{f}$ satisfies the hypothesis of Theorem~\ref{th:deformation}, so we get the first equality below
\[
\delta_{\lambda}(A) = \delta_{\lambda\eps}(\mco\pos{\bs{t}}) = 0\,.
\]
The second equality is by Theorem~\ref{th:regular}. This is as desired. 

Next we verify that when $A$ is Cohen-Macaulay $\delta_{\lambda}(A)\ge 0$, and that if equality holds $A$ is complete intersection. Since  $\dim A=c+1$ one can find 
a surjection $\eps\colon C\to A$ where $C$ is a complete intersection in $\acat(c)$ and $\eps$ induces an isomorphism $\con{\lambda\eps}(C) \cong \con{\lambda}(A)$; see \cite[Theorem~5.6]{Iyengar/Khare/Manning:2022a}. Thus
\begin{align*}
\length_{\mco}\cmod{\lambda\eps}(C) 
	&= \length_{\mco}\con{\lambda\eps}(C) \\
	&=\length_{\mco}\con{\lambda}(A) \\
	&=\length_{\mco}\cmod{\lambda}(A)\,,
\end{align*}
where the first equality holds because $C$ is complete intersection; the second is by the invariance of domain property~\ref{th:invariance-of-domain}, and the last one is the hypothesis $\delta_{\lambda}(A)=0$. Thus Proposition~\ref{pr:iso-criteria} yields that $\eps$ is an isomorphism.
\end{proof}

Theorem~\ref{th:ci-property} extends to modules, in the following sense; this is \cite[Theorem~9.6]{Iyengar/Khare/Manning:2022a}.

\begin{theorem}
\label{th:defect-modules}
When $\depth_AM\ge c+1$ and $M_{\fp_\lambda}\ne 0$ one has $\delta_{\lambda}(M) \ge 0$, and equality holds if and only if $A$ is complete intersection and 
   \[
		M\cong A^\mu \oplus W \quad\text{and $W_{\fp_\lambda}=0$.}
		\]
\end{theorem}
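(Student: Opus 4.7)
The plan is to combine the defect formula \eqref{eq:defect-formula} with Theorems~\ref{th:ci-property} and~\ref{th:freeness}, after first reducing to a situation where the ambient ring has the depth required to invoke Theorem~\ref{th:ci-property}.

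First I would replace $A$ by $\bar A\colonequals A/\operatorname{Ann}_A(M)$, with induced augmentation $\bar\lambda$. Since $M_{\fp_\lambda}$ has positive rank over the regular local domain $A_{\fp_\lambda}$, it contains a nonzero free submodule, so $\operatorname{Ann}_A(M)\cdot A_{\fp_\lambda}=0$, and hence $\bar A_{\fp_{\bar\lambda}}\cong A_{\fp_\lambda}$; in particular $(\bar A,\bar\lambda)\in\acat(c)$. Because $M$ is faithful as an $\bar A$-module, $\depth\bar A \ge \depth_{\bar A}M = \depth_A M \ge c+1$, which places $\bar A$ within the scope of Theorem~\ref{th:ci-property}. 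The surjection $(A,\lambda)\twoheadrightarrow(\bar A,\bar\lambda)$, viewed through Lemma~\ref{le:defect-invariance}, moreover gives $\delta_\lambda(M)\ge\delta_{\bar\lambda}(M)$, with equality precisely when $\con{\lambda}(A)\cong\con{\bar\lambda}(\bar A)$. Writing $\mu\colonequals \rank_\lambda(M)\ge 1$, the defect formula \eqref{eq:defect-formula} over $(\bar A,\bar\lambda)$ reads
\[
\delta_{\bar\lambda}(M) = \mu\cdot\delta_{\bar\lambda}(\bar A) + \length_\mco \Ker(a_{\bar\lambda}(M))\,,
\]
and both summands are nonnegative by Theorem~\ref{th:ci-property}. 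Thus $\delta_\lambda(M)\ge 0$.

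For the equality case, suppose $\delta_\lambda(M)=0$. Then $\delta_{\bar\lambda}(M)=0$, and each summand above vanishes: $\delta_{\bar\lambda}(\bar A)=0$ and $\Ker(a_{\bar\lambda}(M))=0$. By Theorem~\ref{th:ci-property} the first says $\bar A$ is complete intersection. Equality in Lemma~\ref{le:defect-invariance} simultaneously forces $\length_\mco\con{\lambda}(A)=\length_\mco\con{\bar\lambda}(\bar A)$, so Proposition~\ref{pr:iso-criteria}(2) promotes $A\twoheadrightarrow\bar A$ to an isomorphism. Hence $A$ itself is complete intersection; in particular $\dim A=c+1$, so the hypothesis $\depth_A M\ge c+1$ makes $M$ a maximal Cohen--Macaulay $A$-module. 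The vanishing $\Ker(a_\lambda(M))=0$ now translates to $\length_\mco\cmod{\lambda}(M)=\mu\cdot\length_\mco\cmod{\lambda}(A)$, and Theorem~\ref{th:freeness} produces the decomposition $M\cong A^\mu\oplus W$ with $W_{\fp_\lambda}=0$. The converse is immediate: granting $A$ complete intersection and $M$ of this form, Theorem~\ref{th:ci-property} gives $\delta_\lambda(A)=0$, Theorem~\ref{th:freeness} (read backwards) yields $\Ker(a_\lambda(M))=0$, and the defect formula delivers $\delta_\lambda(M)=0$.

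The main technical hurdle is the opening reduction: the hypothesis $\depth_A M\ge c+1$ does not by itself force $\depth A\ge c+1$, so Theorem~\ref{th:ci-property} is not directly available. Passing to the faithful quotient $\bar A$ solves this, but then one must transport the complete intersection property back to $A$ in the equality case, which is precisely the role of Proposition~\ref{pr:iso-criteria}(2).
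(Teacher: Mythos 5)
Your overall architecture — defect formula \eqref{eq:defect-formula} plus Theorems~\ref{th:ci-property} and~\ref{th:freeness}, with Lemma~\ref{le:defect-invariance} and Proposition~\ref{pr:iso-criteria}(2) to transport the complete intersection property back to $A$ in the equality case — is sound, but the opening reduction has a genuine gap. You pass to $\bar A\colonequals A/\operatorname{Ann}_A(M)$ and assert that faithfulness of $M$ over $\bar A$ forces $\depth\bar A\ge\depth_{\bar A}M\ge c+1$. That implication is false. Faithfulness gives an embedding $\bar A\hookrightarrow M^{\oplus n}$, whence $\operatorname{Ass}\bar A\subseteq\operatorname{Ass}M$, and any $M$-regular element is $\bar A$-regular (if $xr=0$ with $r\ne 0$, then $rM\ne0$ and $x$ kills it); this yields $\depth\bar A\ge 1$ when $\depth_{\bar A}M\ge 1$, but the induction stops there because $M/xM$ need not be faithful over $\bar A/x\bar A$. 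Indeed a non-Cohen--Macaulay local domain admits finitely generated faithful modules of strictly larger depth (for a two-dimensional such domain, its normalization or a canonical module is faithful and maximal Cohen--Macaulay while the ring has depth one). Nothing in the hypotheses of the theorem excludes $A$ itself being such a ring, in which case $\operatorname{Ann}_A(M)=0$, the reduction changes nothing, and Theorem~\ref{th:ci-property} is simply unavailable; your proof of $\delta_\lambda(M)\ge0$ breaks down exactly where the theorem's hypothesis ($\depth_AM\ge c+1$) is genuinely weaker than that of Theorem~\ref{th:ci-property} ($\depth A\ge c+1$).

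This is precisely the difficulty the paper's argument is built to circumvent: there one reduces only to the case $\depth A>0$ and then runs an induction on $c$, cutting $A$ and $M$ by elements $\bs f$ as in Theorem~\ref{th:deformation} and using the deformation invariance of the defect (Theorem~\ref{th:deformation-defect}) together with the invariance of domain property to lower $c$ down to the base case. Your handling of the equality case would survive in a repaired proof; only the inequality $\delta_\lambda(M)\ge0$ in the low-depth regime is missing. A smaller point: the positivity of $\mu=\rank_\lambda(M)$, which you assert without proof, does follow from the hypotheses via $\operatorname{grade}(\fp_\lambda,M)\ge\depth_AM-\dim A/\fp_\lambda=c$, so that $M_{\fp_\lambda}$ is maximal Cohen--Macaulay, hence free and nonzero, over the regular local ring $A_{\fp_\lambda}$; it deserves a sentence.
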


\begin{proof}[Sketch of proof]
When $\depth A\ge c+1$  also holds, the inequality $\delta_{\lambda}(M)\ge 0$ follows from \eqref{eq:defect-formula} and Theorem~\ref{th:ci-property}. Given this, the other part of the statement follows from Theorems~\ref{th:ci-property} and \ref{th:freeness}.

The argument in the general case is a reduction to the case where $A$ has positive depth, and an induction on $c$. This uses the invariance of domain property and the behavior of defects under deformations, stated below.
\end{proof}

\subsection{$\Lambda$-structures}\label{Lambda-structure}

Motivated by number theory, we consider a setting where the algebra $A$ in $\acat$ has additional structure,  and give  a variant of the computation of change of congruence modules  in  \S \ref{sec:def}  on going modulo regular sequences.

Let $\bs{t}\colonequals t_1,\dots,t_c$ be indeterminates, $\Lambda_c\colonequals \mco\pos{\bs{t}}$ the power-series ring, and $\Lambda_c\to \mco$ the natural augmentation. 
Fix $(A,\lambda)$ in $\acat(c)$ equipped with a finite flat map $\iota\colon\Lambda_c\to A$ of $\mco$-algebras over $\mco$, so that composite map 
\[
\Lambda_c\xrightarrow{\ \iota\ } A\xrightarrow{\ \lambda\ }\mco\,,
\]
is the augmentation (that is, so that $\iota$ is a morphism in $\acat(c)$). Since $\iota$ is flat the sequence $\iota(t_1),\dots,\iota(t_c)$ is $A$-regular. We assume also that the residue classes of $\bs{t}$ in $\fp_\lambda/\fp_\lambda^2$ form a linearly independent set. 
Thus, setting $A_0\colonequals A/\bs{t}A$, the map $\lambda$ factors through $A_0$, yielding an augmentation $\lambda_0\colon A_0\to \mco$, and $(A_0,\lambda_0)$ is in $\acat(0)$. One gets a commutative diagram of $\mco$-algebras
\begin{equation}
\label{eq:algebra-square}
\begin{tikzcd}
\Lambda_c \ar[d,"\eps" swap] \ar[r,"\iota"] & A\ar[d,"\alpha"] \\
\mco \ar[r] &A_0 \ar[r,"\lambda_0"] &\mco
\end{tikzcd}
\end{equation}
all augmented to $\mco$, via $\lambda_0$. We wish to track the change in contangent modules and congruence modules along $\alpha$, and we do that by using the diagram above, to reducing the problem to one about the map $\eps$, where it is trivial, and the map $\iota$, where it is easier to handle.

In the rest of this discussion we write $\fp$ and $\fp_0$ instead of $\fp_{\lambda}$ and $\fp_{\lambda_0}$, respectively.

We first discuss the change in cotangent modules in passing from $\lambda$ to $\lambda_0$. Since $A_0$ is regular at $\lambda_0$, the $\mco$-module $\mathrm{D}_2(\mco/A_0,\mco)$, the second Andr\'e-Quillen homology  of the map $A_0\to \mco$, is torsion. Moreover one has
\[
\mathrm{D}_1(A_0/A,\mco)\cong \mathrm{D}_1(\mco/\Lambda_c,\mco)\cong \fm/\fm^2 \quad\text{where $\fm\colonequals (\bs{t})\Lambda_c$.}
\]
In particular, this is a free $\mco$-module, of rank $c$.  Thus the Jacobi-Zariski sequence arising from the maps $A\to A_0\to \mco$ yields an exact sequence of $\mco$-modules
\[
0\longrightarrow \fm/\fm^2  \xrightarrow{\ \iota\ } \fp/\fp^2 \longrightarrow \fp_0/\fp_0^2 \longrightarrow 0\,.
\]
where we use $\iota$ also to denote the map induced on cotangent modules by the ring map $\iota$. One gets  an exact sequence
\[
0\longrightarrow \fm/\fm^2 \xrightarrow{\ \iota\ } \tfree{(\fp/\fp^2)} 
	\longrightarrow \con{\lambda_0}(A_0)/\con{\lambda}(A) \longrightarrow 0\,.
\]
From a number theory perspective, it is more natural to consider the exact sequence obtained by applying $(-)^*\colonequals \Hom_\mco(-,\mco)$, namely the sequence
\begin{equation}
\label{eq:Kahler}
0\longrightarrow (\fp/\fp^2)^* \xrightarrow{\ \iota^*\ } (\fm/\fm^2)^* \longrightarrow
 \ext^1_{\mco}(\con{\lambda_0}(A_0)/\con{\lambda}(A),\mco)  \longrightarrow 0\,.
\end{equation}
On the subcategory of torsion $\mco$-modules one has an isomorphism of functors
\[
\ext^1_\mco(-,\mco) \cong \Hom_{\mco}(-,E/\mco)\,.\]
where $E$ is the field of fractions of $\mco$. Since the functor on the right preserves lengths, the computations above yield 
\begin{equation}
\label{eq:con-change}
\begin{aligned}
 \length_{\mco}\con{\lambda_0}(A_0) -  \length_{\mco}\con{\lambda}(A) 
 		 & = \length_{\mco}(\con{\lambda_0}(A_0)/\con{\lambda}(A)) \\
		 & = \length_{\mco}\coker(\iota^*)  \\
		 & = \length_{\mco}\coker (\wedge^c\iota^*)\,.
 \end{aligned}
\end{equation}
The second equality holds because $\iota^*$ is a map between free $\mco$-modules of rank $c$.

Now we move on to the congruence modules. Given commutative diagram~\ref{eq:algebra-square} of algebras over $\mco$ and the functorial properties of $\ecoh^{-}_{-}(\mco)$ one gets a commutative diagram of $\mco$-modules
\[
\begin{tikzcd}
\ecoh^c_{\lambda\iota}(\mco) \ar[d,"\cong"] \ar[r,leftarrow] & \ecoh^c_{\lambda}(\mco) \ar[d] \\
\mco=\ecoh^0_{\idmap}(\mco) \ar[r,equal] &\ecoh^0_{\lambda_0}(\mco)=\mco
\end{tikzcd}
\]
The isomorphism in the lower row is clear from the definitions; the vertical isomorphism is by a direct computation. The identity map on $\mco$ is a canonical generator 
for $\ecoh^0_B(\mco)=\Hom_B(\mco,\mco)$, for any $B$ in $\acat$; this is why we write equalities in the last row. Using  the commutative diagram above and the functoriality of the map \eqref{eq:ecoh-top}, one gets a commutative diagram 
\[  
\begin{tikzcd}
	\bigwedge^c (\fp/\fp^2)^* \arrow[r,"\cong"]\arrow[d, "{\wedge^c\iota^*}" swap]
                        & \ecoh^c_{\lambda}(\mco) \arrow[d]   \\
\bigwedge^c (\fm/\fm^2)^* \ar[r,"\cong"] 
		    			& \ecoh^0_{\lambda_0}(\mco) 
\end{tikzcd}
\]

Consider a finitely generated $A$-module $M$ such that $\bs{t}$ is also regular on $M$ and $\depth_AM\ge c+1$. Setting  $M_0\colonequals M/(\bs{t})M$ and using the identifications above, one gets a commutative diagram like so:
\begin{equation}
\label{eq:big-D}
\begin{tikzcd}
\ext^c_A(\mco,M) \arrow[d, shift left = 8.0ex, "\cong"] \otimes_{\mco}
        \Hom_A(M,\mco)\arrow[d, shift right = 8.0ex, "\cong"] \arrow[r]
                        	& \bigwedge^c (\fp/\fp^2)^* \arrow[d,"{\wedge^c\iota^*}"] \\
        \ext^{0}_{A_0}(\mco,M_0)\otimes_{\mco}  \Hom_{A_0}(M_0,\mco)  \arrow[r] 
							&\bigwedge^c (\fm/\fm^2)^* 
\end{tikzcd}
\end{equation}

All these lead to the following structural refinement of Theorem~\ref{th:deformation}.

\begin{theorem}
\label{th:deformation-with-lambda}
Viewing $\eta_{\lambda}(M)$ and $\eta_{\lambda_0}(M_0)$ as submodules of $\wedge^c(\fp/\fp^2)^*$ and $\wedge^c(\fm/\fm^2)^*$, respectively, there is an equality 
\[
\eta_{\lambda_0}(M_0)=(\wedge^c\iota^*)(\eta_\lambda(M))\,.
\] 
Moreover, with $\mu\colonequals \rank_\lambda M$ there are equalities
\begin{align*}
\length_{\mco}\cmod{\lambda_0}(M_0)- \length_{\mco}\cmod{\lambda}(M) 
			&= \mu\cdot \length_{\mco}\coker(\wedge^c\iota^*)\\
			&= \mu\cdot (\length_{\mco}\con{\lambda_0}(A_0) - \length_{\mco}\con{\lambda}(A))\,.
\end{align*}
\end{theorem}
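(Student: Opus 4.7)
The plan is to establish the two assertions separately. For the identity $\eta_{\lambda_0}(M_0) = (\wedge^c\iota^*)(\eta_\lambda(M))$, the diagram~\eqref{eq:big-D} provides the key input: after passage to torsion-free quotients, its top row realises the pairing~\eqref{eq:pairing1} associated with $M$ at $\lambda$, whose image is $\eta_\lambda(M) \subseteq \wedge^c(\fp/\fp^2)^*$, while its bottom row realises the analogous pairing for $M_0$ at $\lambda_0$, whose image is $\eta_{\lambda_0}(M_0) \subseteq \wedge^c(\fm/\fm^2)^*$. The left-hand vertical arrows in~\eqref{eq:big-D} are change-of-rings isomorphisms (\cite[Lemma~1.2.4]{Bruns/Herzog:1998}), available because $\bs{t}$ is an $M$-regular sequence of length $c$ lying in $\fp$, and the right-hand vertical arrow is $\wedge^c\iota^*$. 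Commutativity of the diagram then forces $(\wedge^c\iota^*)(\eta_\lambda(M)) = \eta_{\lambda_0}(M_0)$.

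For the length equalities I would invoke Theorem~\ref{th:deformation} applied to the regular sequence $\bs{f} = \bs{t}$. All hypotheses hold: $\bs{t}$ is $M$-regular by assumption, its residue classes in $\fp/\fp^2$ are $\mco$-linearly independent by the standing assumptions of Section~\ref{Lambda-structure}, and $\depth_A M \ge c + 1$. This yields
\[
\length_\mco \cmod{\lambda_0}(M_0) - \length_\mco \cmod{\lambda}(M) = \mu \sum_{i=1}^c \ord(t_i).
\]
Equation~\eqref{eq:deformation-con} rewrites the right-hand sum as $\length_\mco \con{\lambda_0}(A_0) - \length_\mco \con{\lambda}(A)$, and~\eqref{eq:con-change} identifies this common value with $\length_\mco \coker(\wedge^c\iota^*)$; the latter identification rests on the observation that $\iota^*$ factors through $(\tfree{\fp/\fp^2})^*$ as an injective $\mco$-linear map between free $\mco$-modules of rank $c$, so $\length_\mco \coker(\wedge^c\iota^*) = \ord(\det \iota^*) = \length_\mco \coker(\iota^*)$. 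Concatenating these identifications yields the asserted chain.

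The substantive content of the theorem lies in the first assertion, which is essentially a functoriality statement for the pairing~\eqref{eq:pairing1} along the augmentation $\alpha\colon A \to A_0$; its proof rests on the commutativity of~\eqref{eq:big-D}, which in turn exploits the naturality of the isomorphism~\eqref{eq:ecoh-top}. The length equalities then follow mechanically by combining Theorem~\ref{th:deformation} with the numerical identities~\eqref{eq:deformation-con} and~\eqref{eq:con-change}, so I expect the main obstacle to be verifying the commutativity of~\eqref{eq:big-D} rather than any of the subsequent bookkeeping.
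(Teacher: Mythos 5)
Your argument for the first equality is exactly the paper's: both read it off from the commutativity of diagram \eqref{eq:big-D}, whose top and bottom rows are the pairings defining $\eta_\lambda(M)$ and $\eta_{\lambda_0}(M_0)$ and whose left-hand verticals are the change-of-rings isomorphisms. For the length formulas the paper simply asserts that they ``follow, given also \eqref{eq:con-change}'', i.e.\ it extracts them from the same diagram (comparing the adjoint maps, which is where the factor $\mu$ enters), whereas you route the computation through Theorem~\ref{th:deformation} applied to $\bs f=\bs t$, then combine \eqref{eq:deformation-con} with \eqref{eq:con-change}. Both chains are valid and rest on the same underlying mechanism---Theorem~\ref{th:deformation} is itself proved with the analogue of \eqref{eq:big-D}---but your version has the small advantage of not having to explain why the congruence-ideal statement upgrades to an equality of congruence-module \emph{lengths} when $\mu>1$: that bookkeeping is already packaged in Theorem~\ref{th:deformation}, and all of its hypotheses ($\bs t$ being $M$-regular with linearly independent classes in $\fp/\fp^2$, and $\depth_AM\ge c+1$) are indeed standing assumptions of \S\ref{Lambda-structure}.
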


\begin{proof}
The first part of the proposition is immediate from the commutative diagram~\eqref{eq:big-D}. The second part then follows, given also \eqref{eq:con-change}.
\end{proof}

\section{Zeta lines  and congruence modules}
\label{se:cotorsion}

We focus on number theory applications  of the results in Section~\ref{se:ca-section}, notably the exact sequence \eqref{eq:Kahler} and Theorem \ref{th:deformation-with-lambda}.  The main result is Theorem~\ref{th:zeta}. We begin with  Proposition \ref{prop:SelmerChange}, which is  a simple consequence of the Poitou-Tate exact sequence and is used to prove Proposition \ref{pr:duality}.

Let $F$ be a number field,  $S$ a finite set of places of $F$, and $G_{F,S}$ the  Galois group of  $F_S/F$, the maximal extension of $F$ unramified outside the places above $S$ in an algebraic closure of $F$.    Fix a prime number $p$, a finite extension $E/\bbQ_p$, and  let $\mco$ denote the ring of integers of $E$. Let $A$ be a $\mco$-module, which is finitely or cofinitely generated, with an action of $G_{F,S}$. The Pontryagin dual and the twisted Pontryagin dual of $A$, respectively, are the $G_{F,S}$-modules
\[
A^\vee\colonequals \Hom_{\mco}(A,E/\mco)\quad\text{and}\quad  A'=A^\vee(1)=\Hom_{\mco}(A,E/\mco(1))\,.
\]

A \emph{Selmer datum} for $S$ and $A$ is a collection $\cL=\{\cL_v\}_v$, where $\cL_v$ is an $\mco$-submodule of $H^1(G_v,A)$ for each $v\in S$. The corresponding \emph{Selmer group} is
\[ 
H^1_\cL(F,A)\colonequals \Ker\big(H^1(G_{F,S},A)\longrightarrow \prod_{v\in S}H^1(G_v,A)/\cL_v\big)\,.
\]
Local Tate-duality induces the perfect pairing 
\begin{equation}
\label{eq:TateLocalDuality}
H^1(G_v,A)\times H^1(G_v,A')\to H^2(G_v,E/\mco(1))\cong E/\mco.
\end{equation}
The \emph{dual Selmer datum} $\cL^\perp$ (for $S$ and $A'$) is defined with $\cL^\perp_v\subset H^1(G_v,A')$ the annihilator of $\cL_v$ under this pairing. The \emph{dual} Selmer group of $A$ is $H^1_{\cL^\perp}(F,A')$.

For $i=1,2$, set $\Sha_S^i(F,A)\colonequals \Ker\big(  H^i(G_{F,S},A)\to \prod_{v\in S}H^i(G_v,A) \big)$.
The result below is standard; see  \cite[8.7.9]{Neukirch/Schmidt/Wingberg:2008}. The argument is based on notes of Boeckle.

\begin{lemma}
\label{lem:Greenberg-Wiles}
\pushQED{\qed}
One has an exact sequence
\[
0 \to H^1_\cL(F,A)\to H^1(G_{F,S},A)\to \prod_{v\in S}\frac{H^1(G_v,A)}{\cL_v} \to  H^1_{\cL^\perp}(F,A’)^\vee\to\Sha_S^2(F,A)\to 0\,.
\]
If $A$ is finite, then 
\[
 \frac{\#H^1_\cL(F,A)}{\#H^1_{\cL^\perp}(F,A')}=\frac{\#H^0(F,A)}{\#H^0(F,A')}\cdot \prod_{v\in S}\frac{\#\cL_v}{\# H^0(F_v,A)}\,. \qedhere
\]
\end{lemma}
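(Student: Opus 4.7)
The strategy is to deduce both parts from the Poitou-Tate nine-term global duality exact sequence for $A$ over $G_{F,S}$. First, I would extract the middle segment
\[
H^1(G_{F,S},A) \xrightarrow{\ \mathrm{loc}\ } \bigoplus_{v \in S} H^1(G_v,A) \xrightarrow{\ \psi\ } H^1(G_{F,S},A')^\vee \xrightarrow{\ \partial\ } H^2(G_{F,S},A) \xrightarrow{\ \mathrm{loc}_2\ } \bigoplus_{v \in S} H^2(G_v,A),
\]
where $\psi$ is the composition of Tate local duality with the dual of restriction, and identify the image of $\partial$ as $\Sha_S^2(F,A)$ by the very definition of $\Sha_S^2$. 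Global reciprocity gives $\psi \circ \mathrm{loc} = 0$.

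Next, to build the desired five-term sequence, I would observe that for $c \in \bigoplus \cL_v$ and $x \in H^1_{\cL^\perp}(F,A')$ the sum $\sum_v \langle c_v, \mathrm{loc}_v(x)\rangle_v$ vanishes by the very definition of $\cL^\perp$, so $\psi$ sends $\bigoplus \cL_v$ into $\mathrm{Ann}(H^1_{\cL^\perp}) \subseteq H^1(G_{F,S},A')^\vee$. Dualizing the inclusion $H^1_{\cL^\perp}(F,A') \hookrightarrow H^1(G_{F,S},A')$ identifies the quotient $H^1(G_{F,S},A')^\vee / \mathrm{Ann}(H^1_{\cL^\perp})$ with $H^1_{\cL^\perp}(F,A')^\vee$, so $\psi$ descends to
\[
\bar\psi\colon \bigoplus_{v \in S} H^1(G_v,A)/\cL_v \longrightarrow H^1_{\cL^\perp}(F,A')^\vee.
\]
A small diagram chase, using the Pontryagin duality identification $(H^1(G_v,A)/\cL_v)^\vee \cong \cL_v^\perp$ from local Tate duality, shows exactness at the two middle terms, while surjectivity onto $\Sha_S^2(F,A)$ comes for free from the extracted Poitou-Tate segment.

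For the cardinality formula, I would take the alternating product of orders in the exact sequence, obtaining the ratio $\#H^1_\cL(F,A)/\#H^1_{\cL^\perp}(F,A')$ in terms of $\#H^1(G_{F,S},A)$, $\#\Sha_S^2(F,A)$, and $\prod_{v \in S} \#\cL_v/\#H^1(G_v,A)$. To arrive at the stated right-hand side I would then apply (i) the global Euler-Poincaré formula for $A$ over $G_{F,S}$, (ii) Tate's local Euler-Poincaré formula at each $v \in S$, (iii) the Poitou-Tate identity $\#\Sha_S^2(F,A) = \#\Sha_S^1(F,A')$, and (iv) local Tate duality $\#H^i(G_v,A) = \#H^{2-i}(G_v,A')$. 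The main obstacle will be the bookkeeping needed to verify that the Euler characteristic contributions at archimedean places and the terms in cohomological degrees $0$ and $2$ cancel in exactly the pattern that produces the clean quotient $\#H^0(F,A)/\#H^0(F,A') \cdot \prod_v \#\cL_v/\#H^0(F_v,A)$; this is the delicate combinatorial step where Boeckle's bookkeeping is useful.
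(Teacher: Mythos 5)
Your plan is correct and is exactly the standard derivation that the paper itself relies on: the paper gives no proof, citing \cite{Neukirch/Schmidt/Wingberg:2008}*{8.7.9}, and your argument---extracting the middle of the Poitou--Tate nine-term sequence, descending $\psi$ to $\prod_v H^1(G_v,A)/\cL_v$ via the annihilator identification $\mathrm{Ann}(H^1_{\cL^\perp})=\psi(\prod_v\cL_v)$, and then combining the alternating product with the global and local Euler--Poincar\'e formulas and the tail of the nine-term sequence---is precisely that reference's proof. The only point to keep in mind in the bookkeeping is the usual convention of Tate cohomology at archimedean places (harmless here since $p$ is odd in the applications).
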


Given Selmer datum $\cL$ and $\tcL$ for $S$ and $A$,  we write $\cL\subseteq\tcL$ if $\cL_v\subseteq \tcL_v$ for all~$v$.

\begin{proposition}
\label{prop:SelmerChange}
Suppose that $A$ is compact or cocompact and that $\cL\subseteq \tcL$ are Selmer data. Then one has a natural exact sequence
\[ 
0\longrightarrow \frac{H^1_{\tcL}(S,A)}{H^1_\cL(S,A)} \longrightarrow \prod_{v\in S} \frac{\tcL_v}{\cL_v}
	\longrightarrow \bigg( \frac{H^1_{\cL^\perp}(S,A')}{H^1_{\tcL^\perp}(S,A')}\bigg)^\vee \longrightarrow 0
	\]
\end{proposition}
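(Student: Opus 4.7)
The plan is to deduce the sequence by comparing the Greenberg--Wiles exact sequences of Lemma~\ref{lem:Greenberg-Wiles} attached to $\cL$ and to $\tcL$. Write $Q_\cL\colonequals \prod_{v\in S} H^1(G_v,A)/\cL_v$ and $Q_{\tcL}\colonequals \prod_{v\in S} H^1(G_v,A)/\tcL_v$; the hypothesis $\cL\subseteq\tcL$ produces a natural surjection $Q_\cL\twoheadrightarrow Q_{\tcL}$ with kernel $\prod_{v\in S} \tcL_v/\cL_v$. Functoriality of the Poitou--Tate construction in the Selmer datum fits the two 5-term sequences of Lemma~\ref{lem:Greenberg-Wiles} into a commutative ladder which is the identity on $H^1(G_{F,S},A)$ and on $\Sha^2_S(F,A)$, whose third vertical map is $Q_\cL\twoheadrightarrow Q_{\tcL}$ and whose fourth vertical map is the surjection $H^1_{\cL^\perp}(S,A')^\vee\twoheadrightarrow H^1_{\tcL^\perp}(S,A')^\vee$ obtained by applying Pontryagin duality to the inclusion $H^1_{\tcL^\perp}(S,A')\subseteq H^1_{\cL^\perp}(S,A')$; the kernel of this last surjection is $\bigl(H^1_{\cL^\perp}(S,A')/H^1_{\tcL^\perp}(S,A')\bigr)^\vee$, because the hypothesis that $A$ is compact or cocompact makes Pontryagin duality exact.

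With this ladder in hand, the desired sequence is visible inside the $\cL$-row. By definition $H^1_{\tcL}(S,A)$ is the preimage of $\prod_v\tcL_v/\cL_v\subseteq Q_\cL$ under $H^1(G_{F,S},A)\to Q_\cL$, while the kernel of this map is $H^1_\cL(S,A)$; hence the map factors to an injection $H^1_{\tcL}/H^1_\cL \hookrightarrow \prod_{v\in S}\tcL_v/\cL_v$ whose image is exactly the kernel of the composite $\prod_{v\in S}\tcL_v/\cL_v \hookrightarrow Q_\cL \to H^1_{\cL^\perp}(S,A')^\vee$, by exactness of the $\cL$-row at $Q_\cL$. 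It remains to check that this composite lands in and surjects onto $(H^1_{\cL^\perp}/H^1_{\tcL^\perp})^\vee$. The containment is immediate: post-composing with $H^1_{\cL^\perp}^\vee\twoheadrightarrow H^1_{\tcL^\perp}^\vee$ factors through $Q_\cL\twoheadrightarrow Q_{\tcL}$ and so kills $\prod_v\tcL_v/\cL_v$. Surjectivity is a short diagram chase: any $x\in \ker(H^1_{\cL^\perp}^\vee\to H^1_{\tcL^\perp}^\vee)$ lies in $\ker(H^1_{\cL^\perp}^\vee\to \Sha^2_S)$ by naturality, hence lifts to some $q\in Q_\cL$ by the $\cL$-row; the image $\bar q\in Q_{\tcL}$ then lies in $\ker(Q_{\tcL}\to H^1_{\tcL^\perp}^\vee)$, so the $\tcL$-row writes $\bar q$ as $\rho'(y)$ for some $y\in H^1(G_{F,S},A)$; subtracting the global contribution $\rho(y)$ from $q$ yields an element of $\prod_v\tcL_v/\cL_v$ still mapping to $x$.

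The only real obstacle is bookkeeping: one must check that the various Greenberg--Wiles sequences are genuinely natural in the Selmer datum (which reduces to naturality of local Tate duality and of the Poitou--Tate long exact sequence) and invoke exactness of Pontryagin duality at the right step, which is precisely what the compact/cocompact hypothesis on $A$ ensures. No further tool beyond Lemma~\ref{lem:Greenberg-Wiles} and Pontryagin duality is needed.
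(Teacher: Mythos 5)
Your argument is correct and follows essentially the same route as the paper: both compare the two Greenberg--Wiles five-term sequences of Lemma~\ref{lem:Greenberg-Wiles} via their naturality in the Selmer datum and then use exactness of Pontryagin duality to identify $\Ker\bigl(H^1_{\cL^\perp}(S,A')^\vee\to H^1_{\tcL^\perp}(S,A')^\vee\bigr)$ with $\bigl(H^1_{\cL^\perp}(S,A')/H^1_{\tcL^\perp}(S,A')\bigr)^\vee$. The only difference is presentational: the paper packages your explicit diagram chase as two applications of the Snake Lemma.
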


\begin{proof}
From the definitions one gets that the following natural maps are injective:
\[
H^1_{\cL}(S,A) \longrightarrow H^1_{\tcL}(S,A)\quad\text{and}\quad H^1_{\tcL^\perp}(S,A')\longrightarrow H^1_{\cL^\perp}(S,A')\,.
\]
Setting $K\colonequals \Ker (H^1_{\cL^\perp}(F,A')^\vee\to\Sha_S^2(F,A)) $ and similarly $\widetilde{K}$ with $\tcL$ in place of $\cL$, the naturality of the exact sequence in Lemma~\ref{lem:Greenberg-Wiles} yields the commutative diagram
\[
\begin{tikzcd}
0 \arrow[r] & \displaystyle{\frac{H^1(G_{F,S},A)}{H^1_{\cL}(F,A)}} \arrow[d,twoheadrightarrow] \arrow[r] 
	& \prod_{v\in S} \displaystyle\frac{H^1(G_v,A)}{\cL_v} \arrow[d,twoheadrightarrow] \arrow[r] 
		& K \arrow[d,twoheadrightarrow] \arrow[r] & 0 \\
0 \arrow[r] & \displaystyle{\frac{H^1(G_{F,S},A)}{H^1_{\tcL}(F,A)}} \arrow[r] 
    & \prod_{v\in S}\displaystyle \frac{H^1(G_v,A)}{\tcL_v} 
			\arrow[r] & \widetilde{K} \arrow[r] & 0 	
\end{tikzcd}
\]
The Snake Lemma yields  the exact sequence
\[
0\longrightarrow  \displaystyle \frac{H^1_{\tcL}(F,A)}{ H^1_\cL(F,A)} \longrightarrow 
		\prod_{v\in S} \displaystyle \frac{\tcL_v}{\cL_v} \longrightarrow K \longrightarrow \widetilde{K} \longrightarrow 0\,.
\]
By applying the Snake Lemma to the  commutative diagram
\[
\begin{tikzcd}
0  \arrow[r] & K \arrow[d,twoheadrightarrow] \arrow[r]
				& H^1_{\tcL^\perp}(F,A')^\vee \arrow[d,twoheadrightarrow] \arrow[r] &\Sha_S^2(F,A) \arrow[d,equal] \arrow[r] & 0 \\
0  \arrow[r] & \widetilde{K} \arrow[r]
				& H^1_{\cL^\perp}(F,A')^\vee \arrow[r] &\Sha_S^2(F,A) \arrow[r] & 0 				
\end{tikzcd}
\] 
and using Pontryagin duality, we obtain isomorphisms
\begin{align*}
\Ker (K\to \widetilde{K})
		&\cong \Ker(H^1_{\cL^\perp}(F,A')^\vee\to H^1_{\tcL^\perp}(F,A')^\vee ) \\
		&\cong \bigg(\frac{H^1_{\cL^\perp}(F,A')}{H^1_{\tcL^\perp}(F,A')}\bigg)^\vee,
\end{align*}
concluding the proof of the proposition.
\end{proof}

\subsection*{Nearly ordinary Hilbert modular forms}
We specialize to a situation which corresponds to that of \cite{Urban:2021}.  Thus $F$ is a totally real number field, $p$ an odd prime and $f$ a nearly ordinary at $p$, holomorphic 
 and cohomological  cuspidal Hilbert modular newform for $\GL_2(\bbA_F)$. Our coefficients $A$ as  in the section above arise from the adjoint  $\ad \rho_f$  (and $\ad (\rho_f \otimes_\mco E/\mco)$) of  (an) integral Galois  representation 
 \[
 \rho_f\colon G_{F,S} \to \GL_2(\mco)
 \] 
 associated  to $f$ and an embedding of $\overline{\bbQ} \hookrightarrow \overline{\bbQ_p}$.  We assume the residual representation  $
 \overline{\rho}_f\colon G_{F,S} \to \GL_2(k)$ is irreducible (which implies that it is absolutely irreducible as $\rho_f$ is totally odd and $p>2$), and thus there is a unique integral representation $\rho_f$ associated to $f$ (by a well-known result of Carayol).

We apply the results of the previous section  to study the Galois cohomology of the adjoint representation of $\rho_f$ with several local conditions at places dividing $p$. For each $v|p$, we fix  a decomposition subgroup $D_v$ at $v$. We call $I_v\subset D_v$ the inertia subgroup, $F_v$ the completion of $F$ at $v$, and $d_v$  its degree over $\bbQ_p$. By nearly ordinarity of $f$, for each $v|p$ there exists $g_v\in \GL_2(\mco)$ such that the restriction to the decomposition subgroup $D_v$ at $v$ of $g_v\rho_fg_v^{-1}$ is upper triangular. We also assume  that it is $v$-distinguished (that is, the characters appearing on the diagonal are distinct  modulo the uniformizer $\varpi$ of $\mco$) and indecomposable.  We then consider the following summands:
\[
\CF^+_v\colonequals  \{ g_v  \begin{pmatrix} 0&* \\ 0&0\end{pmatrix} g_v^{-1} \} \subset \CF^0_v\colonequals  \{ g_v  \begin{pmatrix} *&*\\0&*\end{pmatrix}g_v^{-1} \} \subset \ad\rho_f
\]
We denote $Gr^0_v\colonequals\CF^0_v/\CF^+_v$ and fix an isomorphism of $D_v$-modules $Gr^0_v\cong \mco$.  Let $B$ be a $\mco$-module.
The \emph{ordinary} condition $H^1_{\ord}(F_v,\ad\rho_f\otimes B)$  at $v$ is given by  the image of $H^1(F_v, \CF_v^0\otimes B)$ in $H^1(F_v,\ad\rho_f\otimes B)  $, and (in the terminology of Wiles~\cite{Wiles:1995}) the \emph{Selmer} condition $H^1_{\Sel}(F_v,\ad\rho_f\otimes B)$ at $v$ is given by the image of 
\[
\Ker (H^1(F_v,\CF^+_v\otimes B)\longrightarrow H^1(I_v, Gr^0_v\otimes B))
\]
in $H^1_{\ord}(F_v,\ad\rho_f\otimes B)$. Since the representation $\rho_f$ is $v$-distinguished, we get an exact sequence:
\[
0\to H^1_{\Sel}(F_v,\ad\rho_f\otimes B)\to H^1_{\ord}(F_v,\ad\rho_f\otimes B)\to H^1(I_v,B)^{\frac{D_v}{I_v}}\to 0
\]
The orthogonal of the finite \emph{Selmer} condition    $H^1_{\Sel^\perp}(F_v,\ad\rho_f\otimes B(1))$ and of the ordinary condition $H^1_{\ord^\perp}(F_v,\ad\rho_f\otimes B(1))$ are respectively given by the images  of  $H^1(F_v,\mathcal F_v^0\otimes B(1))$ and  of $H^1(F_v,\mathcal F^+_v\otimes B(1))  $ in $H^1(F_v,\ad\rho_f\otimes  B(1))$.

\begin{remark} 
\label{BK=Se} If the action of $D_v$ on $\mathcal F^+_v$ is distinct from the cyclotomic character, then the finite \emph{Selmer} condition is nothing else but the finite  Bloch-Kato condition.
In that case  one has
\begin{align*}
H^1_{\Sel}(F_v,\ad\rho_f\otimes B) &=H^1_{f}(F_v,\ad\rho_f\otimes B) \\
H^1_{\Sel^\perp}(F_v,\ad\rho_f\otimes B(1)) &=H^1_{f}(F_v,\ad\rho_f\otimes B(1))\,.
	\end{align*}
\end{remark}

Next we interpret some higher congruence modules in terms of Galois cohomology
 (local and global) and by applying Theorem \ref{th:deformation-with-lambda} in the situation described below. 
 
\subsection*{Congruence modules and Galois cohomology}
We recall some ingredients of the set up of \cite{Urban:2021}; most of the notation is borrowed from it.

Let  $\kappa\colonequals (\sum_\sigma k_\sigma .\sigma,\sum_\sigma l_\sigma .\sigma,  ) \in\bbZ[\Sigma_F]^2$ be the  weight of the 
cohomological Hilbert modular cusp form $f$. We have  $k_\sigma\geq 2$ for all $\sigma\in \Sigma_F$ 
and $w=k_\sigma+2l_\sigma$  is independent of $\sigma$. For such a weight and a $O_{F'}$-algebra $S$ with $F'$ the normal closure of $F$, we consider  the algebraic representation of $\GL_2(O_{F’})$:
\[
L(\kappa,S)\colonequals\bigotimes_\sigma \mathrm{Sym}^{k_\sigma-2}(S^2)\otimes {\det}^{l_{\sigma}}\,.
\]

For each neat open compact subgroup $K\subset \GL_2(\bbA_f\otimes F)$, this defines a local system $L(\kappa,\mathbb{C})$ on the Hilbert modular variety
\[
X(K)\colonequals\GL_2(F)\backslash \GL_2(\bbA\otimes F)/KK_\infty Z(F)
\]
where $Z$ stands for the center of $\GL_2$ and $K_\infty$ is the maximal compact (modulo the center) subgroup of $\GL_2(\mathbb{R}\otimes F)$.

Let $\mathfrak n\subset O_F$ be the tame conductor of $f$. It is a nonzero  integral ideal of $O_F$ prime to $p$. Let $K^p_{11}(\mathfrak n) \subset \GL_2(\widehat\bbZ^p\otimes O_F)$ be the subgroup of matrices which are upper unipotent modulo $\mathfrak n$  and where we have written $\widehat\bbZ^p$ for the prime-to-$p$ part of the profinite completion of  $\bbZ$. We will assume that $K_{11}^p(\mathfrak n)$ is neat. Let $\omega$ be the central character of the cuspidal representation 
attached to $f$. It is an id\` ele class character of conductor dividing $\mathfrak n p^\infty$ and infinity type $|\cdot |^w$.

For each positive  integer $n$, we denote $K_0(p^n)$ the subgroup of $\GL_2(O_F\otimes\bbZ_p)$ of matrices 
which are upper triangular modulo $p^n$ and by  $K_1(p^n)$ its subgroup of those such that the diagonal entries are congruent modulo $p^n$. We identify 
$K_0(p^n)/K_1(p^n)$ with $(O_F/p^nO_F)^\times$
via the map $\left(\begin{array}{cc} a&b\\c&d\end{array}\right)\mapsto a^{-1}d$. 

Let $h_{\kappa}^{\ord}(\mathfrak np^n,\omega)$  be the nearly ordinary Hecke algebra 
of level $K_{11}^p(\mathfrak n)K_1(p^n)$, weight $\kappa$ and central character $\omega$. We then consider the universal nearly ordinary Hecke algebra of weight $\kappa$ and tame level $K^p$ 
and action of the center given by $\omega$.
\[
\mathbf h^{\ord}= \mathbf h^{\ord}_{\kappa}(\mathfrak n)\colonequals\lim_{\underset{n}{\leftarrow}} h_{\kappa}^{\ord}(\mathfrak np^n)
\]
The Hecke ring $h_{\kappa}^{\ord}(\mathfrak np^n)$ has a natural structure of $\mco[(O_F/p^nO_F)^\times]$-algebra which induces a structure of $\Lambda_F$-algebra 
on $\mathbf h^{\ord}$ with 
\[
\Lambda_F\colonequals\mco\pos{O_{F,p}^1}\cong \mco\pos{t_1,\dots,t_d}
\]
and $O_{F,p}^1\cong \bbZ_p^d$ the subgroup of $O_{F,p}^\times$ of local units congruent
 to 1 modulo $p$. Let $\fm$ be the kernel of the map $\Lambda_F\to \mco$ corresponding to the trivial character of $O_{F,p}^1$.

The Hecke eigensystem attached to our nearly ordinary Hilbert modular form $f$ gives us an homomorphism:`1
\[
\lambda_f \colon \mathbf h^{\ord}\rightarrow  h^{\ord}_\kappa(\mathfrak n p^r) \rightarrow \mco.
\]
with $r$ the smallest integer so that $f$ is $K_1(p^r)$-invariant.

We now denote by $\bbT^{\ord}$ (resp. $\bbT_0$) the localisation of $\mathbf h^{\ord}$ (resp. $ h^{\ord}_\kappa(\mathfrak n p^r) $ )    
at its maximal ideal $\fm_f$ containing $\ker \lambda_f$. It is known thanks to the work of Hida that $\bbT^{\ord}$ is free of finite rank over $\Lambda_F$. Moreover, we have a canonical isomorphism
\[
\bbT^{\ord}\otimes_{\Lambda_F}\mco\cong \bbT_0\,.
\]
We now construct a $\bbT^{\ord}$-module which is  free over $\Lambda_F$ and interpolates the nearly ordinary cohomology of the Hilbert modular variety localized at the maximal ideal associated to $f$.
For any $p$-adically complete $\mco$-algebra $A$ and $n\geq r$, let
$$\mathcal C_n(\kappa,A)\colonequals Ind_{K_1(p^n)}^{K_1(p^r)} L(\kappa,A)$$
and write $\mathcal C(\kappa,A)$ for the direct limit of  the $\mathcal C_n(\kappa,A)$ for the obvious transition maps, and $\mathcal C(\kappa,\mco)$
 for the inverse limit of the $\mathcal C(\kappa,\mco/p^m\mco)$ as $m$ varies. 
 It is clearly a $\Lambda_F[K_1(p^r)]$-module.
 
Let $\eta_{\lambda_f}(M)$ be the congruence ideal of $M$ with respect to $\lambda_f$ introduced in \ref{sec:pairing}.

\begin{proposition} 
\label{pr:control} 
Assume the image of $\bar\rho_f$ is not solvable. Then for any $\epsilon$ in $\{\pm1\}^{\Sigma_F}$, the $\bbT^{\ord}$-module
	\[
	\mathbf M^\epsilon\colonequals H^d(X(K_{11}(\mathfrak np^r), \mathcal C(\kappa,\mco))_{\fm_f} ^\epsilon
	\]
	is free of finite rank over $\Lambda_F$, and
	\[
	\mathbf M^\epsilon/\mathfrak m \mathbf M^\epsilon= \mathbf M^\epsilon_0\colonequals H^d(X(K_1(\mathfrak np^r), \mathcal L(\kappa,\mco))_{\fm_f}^\epsilon\,.
	\]
	Moreover $\eta_{\lambda_f}(\mathbf M^\epsilon_0)=(\xi^\epsilon_f)$,  where 
		\[
		\xi_f^\epsilon\colonequals\frac{\Gamma(\ad\rho_f,1)L^{S_f}(\ad\rho_f,1)}{\Omega_f^\epsilon\Omega_f^{-\epsilon}}
		\]
	where $S_f$ is the set of finite places where $\rho_f$ is ramified and $(\Omega_f^\epsilon)_{ \epsilon\in \{\pm 1\}^{\Sigma_F}}$ 
	are the canonical complex periods attached to the Hilbert modular form $f$ in\footnote{In \cite{Dimitrov:2009}, the  $\epsilon$-parts of the cohomology and the  periods are indexed by 
	the subsets $J\subset \Sigma_F$ corresponding to the character $\epsilon_J$} \cite[\S 7.1]{Dimitrov:2009}. 
\end{proposition}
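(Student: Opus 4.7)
The plan is to obtain the freeness and control assertions from Hida's theory of nearly ordinary $p$-adic families of Hilbert modular forms, and then to compute the congruence ideal by combining the $c=0$ pairing description of $\eta_{\lambda_f}$ from \S\ref{sec:pairing} with the classical Petersson-pairing computation of the adjoint congruence number due to Hida and Dimitrov.

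\textbf{Freeness and control.} First I would apply Hida's control theorem for nearly ordinary cohomology of Hilbert modular varieties, as extended to the totally real setting by Hida and Dimitrov. The nearly ordinary projector applied to $H^d(X(K_{11}^p(\mathfrak n)K_1(p^r)), \mathcal C(\kappa,\mco))$ produces a $\Lambda_F$-module which, after localization at $\fm_f$, is free of finite rank over $\Lambda_F$. The non-solvability hypothesis on the image of $\bar\rho_f$ ensures that $\bar\rho_f$ is non-Eisenstein (in fact absolutely irreducible) and non-CM, which rules out contributions from congruence companions and propagates multiplicity-one behavior through the $\epsilon$-decomposition. Specializing at the trivial character of $O_{F,p}^1$, the control theorem identifies $\mathbf M^\epsilon/\fm \mathbf M^\epsilon \cong \mathbf M_0^\epsilon$.

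\textbf{Congruence ideal via the pairing.} Since $\bbT_0$ is finite over $\mco$, the pair $(\bbT_0,\lambda_f)$ lies in $\acat(0)$, and the discussion of the case $c=0$ in \S\ref{sec:pairing} shows that $\eta_{\lambda_f}(\mathbf M_0^\epsilon)$ is the image of the natural pairing
\[
\mathbf M_0^\epsilon[\fp_{\lambda_f}] \otimes_\mco \Hom_{\bbT_0}(\mathbf M_0^\epsilon,\mco) \longrightarrow \mco.
\]
Using Poincaré duality on $X(K_1(\mathfrak n p^r))$ (or its Borel--Serre compactification), the $\mco$-dual of $\mathbf M_0^\epsilon$ is identified with $\mathbf M_0^{-\epsilon}$ up to an appropriate twist by the central character. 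Under this identification the abstract pairing becomes the restriction of Poincaré duality to the $f$-isotypic pieces $\mathbf M_0^{\pm\epsilon}[\fp_{\lambda_f}]$. By construction, the canonical periods $\Omega_f^{\pm\epsilon}$ of \cite[\S 7.1]{Dimitrov:2009} are chosen so as to normalize the Eichler--Shimura isomorphism integrally on these components; evaluating the resulting Petersson-type pairing then yields the Hida--Dimitrov formula for the adjoint congruence number, namely $\eta_{\lambda_f}(\mathbf M_0^\epsilon)=(\xi_f^\epsilon)$.

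\textbf{Main obstacle.} The principal difficulty lies in reconciling normalizations: the abstract congruence ideal from \S\ref{sec:pairing} must be matched, up to $\mco$-units, with the classical congruence number computed in Dimitrov's work. Careful bookkeeping of the Atkin--Lehner involutions, of the interchange of the $\pm$-parts under Poincaré duality, and of the denominator coming from $\Gamma(\ad\rho_f,1)$ is required. Once these identifications are settled, the equality $\eta_{\lambda_f}(\mathbf M_0^\epsilon)=(\xi_f^\epsilon)$ is a direct translation of Dimitrov's formula, which in turn generalizes Hida's original result \cite{Hida:1981}.
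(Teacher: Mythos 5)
Your overall strategy coincides with the paper's: the freeness and control statements are obtained from Hida theory, and the identification $\eta_{\lambda_f}(\mathbf M_0^\epsilon)=(\xi_f^\epsilon)$ is reduced to Dimitrov's computation of the adjoint congruence number (\S\S 7.2--7.3 of \cite{Dimitrov:2009}, equations (50) and (51)) via the $c=0$ pairing of \S\ref{sec:pairing} and Poincar\'e duality. That part of your write-up is fine.

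There is, however, a genuine gap in the freeness step: you explain the non-solvability hypothesis as ruling out Eisenstein/CM phenomena and ``propagating multiplicity one,'' but that is not the mechanism by which it is used, and it does not by itself yield freeness over $\Lambda_F$. What is actually required --- and what the paper invokes --- is the vanishing theorem of Caraiani and Tamiozzo (Theorem 7.1.1 of \cite{Caraiani/Tamiozzo:2023}): under the hypothesis that the image of $\bar\rho_f$ is not solvable, the cohomology of the Hilbert modular variety with torsion coefficients, localized at $\fm_f$, is concentrated in the middle degree $d$. This concentration is the essential input. The cohomology is computed by a perfect complex over $\Lambda_F$ (resp.\ over $\Lambda_F/p^m$ at finite level), and only when the localization at $\fm_f$ lives in a single degree can one conclude that $\mathbf M^\epsilon$ is free over $\Lambda_F$ and that specialization at $\fm$ is exact, giving $\mathbf M^\epsilon/\fm\mathbf M^\epsilon\cong\mathbf M_0^\epsilon$ with no correction terms from higher or lower degrees. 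Without this vanishing, Tor and boundary contributions from other cohomological degrees can obstruct both the freeness and the control isomorphism, so your appeal to ``Hida's control theorem'' alone does not close the argument.
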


\begin{proof}
This is a classical exercise in Hida theory since the localization at $\mathfrak m_f$ capture a direct factor of the {\it nearly ordinary} part of the cohomology. The fact that the module is free over $\Lambda_F$ follows from a control theorem  and  the vanishing Theorem 7.1.1 of  Caraiani and Tamiozzo
\cite{Caraiani/Tamiozzo:2023}. The last part of the proposition follows from a computation of Dimitrov in the sections 7.2 and 7.3 of \cite{Dimitrov:2009},and in particular its equations (50) and (51). 
\end{proof}

We use the inclusion $ \eta_{\lambda_f}(\mathbf M^\epsilon)\subset  F^d_{\lambda_f}(\mco) = \bigwedge_{\mco}^d(\fp/\fp^2)^*$ to define  the zeta $\mco$-module associated to $f$.

\subsection*{Construction of zeta lines} 
Given an $\mco$-module $A$ we set
\[
A^*\colonequals \Hom_{\mco}(A,\mco)\,.
\]
Let $R^{\ord}$ (resp. $R_0$) be the universal deformation ring of $\bar\rho_f$ with fixed determinant equal to $\det\rho_f$ and with nearly ordinary conditions  (resp. with ordinary condition of  weight $\kappa$) at places dividing $p$  and  the unramified condition at finite places away from those dividing $\mathfrak n p \infty$. 

We have a canonical surjective map $R^{\ord} \to \bbT^{\ord}$. Set
\[
\fp \colonequals\Ker (\bbT^{\ord}\to \mco) \quad \text{and} \quad  \fp_R \colonequals\Ker ( R^{\ord}\to \mco)\,,
\]
and consider the natural surjection
\[
H^1_{\full,\ord}(F,\ad\rho_f\otimes E/\mco)^\vee \cong \fp_R/\fp_R^2 \twoheadrightarrow \fp/\fp^2\,.
\]
The subscript $\full$ means that no local conditions are required at places dividing $\mathfrak n$. The isomorphism is standard;  see, for example, \cite[Lemma 3.3]{Urban:2021}. This induces maps
\[
F^d_{\lambda_f}(\mco) = \bigwedge_{\mco}^d(\fp/\fp^2)^* \longrightarrow \bigwedge_{\mco}^d(\fp_R/\fp_R^2)^*\cong \bigwedge^d_{\mco} H^1_{\full,\ord}(F,\ad(\rho_f))\,.
\]
\begin{definition}
The image of the submodule  $\eta_{\lambda_f}(\mathbf M^\epsilon)\subset   \bigwedge^d ( \fp/\fp^2)^*$ under the composition of maps above is a cyclic $\mco$-submodule; we write  it as $(z_f^\epsilon)$ and call it the zeta line. Thus $z_f^\epsilon$   is well-defined only up to multiplication by a unit in $\mco$.
\end{definition}

For each $v$, the quotient map $\CF^0_v\to Gr^0_v$ induces the map
\[
H^1(F_v,\CF^0_v) \longrightarrow H^1(F_v,Gr^0_v)\cong  H^1(I_v,Gr^0_v)^{D_v/I_v}\cong  \mco^{d_v}\,.
\]
Since $\sum_{v|p}d_v=[F:\bbQ]=d$ these induce the isomorphism
\[
\bigwedge^d(\prod_{v|p} H^1(F_v,\CF^0_v)) \xrightarrow{\ \cong \ } \bigotimes_{v|p}\bigwedge_\mco^{d_v} H^1(I_v,Gr^0_v)^{D_v/I_v}\cong \mco\,.
\]
Precomposing this with the $d$th exterior power of the restriction map 
\[
\mathrm{res}_p\colon H^1_{\full,\ord}(F,\ad\rho_f) \to \prod_{v|p} H^1(F_v,\CF^0_v)\,.
\] 
yields the map
\[
\bigwedge ^d \mathrm{res}_p \colon  \bigwedge^d H^1_{\full,\ord}(F,\ad\rho_f) \longrightarrow \bigotimes_{v|p}\bigwedge_\mco^{d_v} H^1(I_v,Gr^0_v)^{D_v/I_v}\,.
\]
The following  theorem is the main result of this section.

\begin{theorem}
\label{th:zeta} 
Assume that the residual representation $\bar\rho_f$ has non solvable image and choose
  $\epsilon\in\{\pm1\}^{\Sigma_F}$.
	Then \[(\bigwedge ^d \mathrm{res}_p) (z^\epsilon_f) =(\xi_f^\epsilon)\] where  as before \[
		\xi_f^\epsilon\colonequals\frac{\Gamma(\ad\rho_f,1)L^{S_f}(\ad\rho_f,1)}{\Omega_f^\epsilon\Omega_f^{-\epsilon}}.
		\]
\end{theorem}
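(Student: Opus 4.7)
The plan is to combine Theorem~\ref{th:deformation-with-lambda} applied to $(\bbT^{\ord},\lambda_f,\mathbf{M}^\epsilon)$ with Proposition~\ref{pr:control}, and then to identify the cotangent map $\wedge^d\iota^*$ with $\wedge^d\mathrm{res}_p$ by transporting everything through the surjection $R^{\ord}\twoheadrightarrow\bbT^{\ord}$.

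First, I verify the hypotheses of Theorem~\ref{th:deformation-with-lambda} with $c=d=[F:\bbQ]$ and the finite flat Hida map $\iota\colon\Lambda_F\to\bbT^{\ord}$. By Proposition~\ref{pr:control}, $\mathbf{M}^\epsilon$ is $\Lambda_F$-free, so the topological generators $\bs{t}$ of $\fm$ form a regular sequence on $\mathbf{M}^\epsilon$, $\depth_{\bbT^{\ord}}\mathbf{M}^\epsilon\ge d+1$, and their images in $\fp/\fp^2$ are $\mco$-linearly independent because $\iota$ is a morphism in $\acat(d)$. Theorem~\ref{th:deformation-with-lambda} then gives $\eta_{\lambda_0}(\mathbf{M}^\epsilon_0) = (\wedge^d\iota^*)\bigl(\eta_{\lambda_f}(\mathbf{M}^\epsilon)\bigr)$ as submodules of $\bigwedge^d(\fm/\fm^2)^*\cong\mco$, and the second half of Proposition~\ref{pr:control} identifies the left-hand side with $(\xi_f^\epsilon)$. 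The compatibility of the $\Lambda_F$-structures on $R^{\ord}$ and $\bbT^{\ord}$ with the canonical surjection $R^{\ord}\twoheadrightarrow\bbT^{\ord}$ yields a commutative square
\[
\begin{tikzcd}
\bigwedge^d(\fp/\fp^2)^* \ar[r,"\wedge^d\iota^*"] \ar[d,hook] & \bigwedge^d(\fm/\fm^2)^* \ar[d,equal] \\
\bigwedge^d(\fp_R/\fp_R^2)^* \ar[r,"\wedge^d\iota_R^*"] & \bigwedge^d(\fm/\fm^2)^*
\end{tikzcd}
\]
where $\iota_R\colon\Lambda_F\to R^{\ord}$ denotes the structural map. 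By construction, the image of $\eta_{\lambda_f}(\mathbf{M}^\epsilon)$ in the bottom-left corner, identified with $\bigwedge^d H^1_{\full,\ord}(F,\ad\rho_f)$ via $(\fp_R/\fp_R^2)^*\cong H^1_{\full,\ord}(F,\ad\rho_f)$, is $(z_f^\epsilon)$; hence $(\wedge^d\iota^*)(\eta_{\lambda_f}(\mathbf{M}^\epsilon))=(\wedge^d\iota_R^*)(z_f^\epsilon)$.

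The main obstacle is the remaining identification of $\wedge^d\iota_R^*$ with $\wedge^d\mathrm{res}_p$. The $\Lambda_F=\mco\pos{O_{F,p}^1}$-structure on $R^{\ord}$ is induced by the inertial characters $\psi_v|_{I_v}$ acting on the line $\CF_v^+$ of the tautological deformation for each $v\mid p$, transported through local reciprocity $O_{F,v}^\times\to I_v^{\mathrm{ab}}$. Dually, a tangent class $c\in H^1_{\full,\ord}(F,\ad\rho_f)\cong(\fp_R/\fp_R^2)^*$ is mapped by $\iota_R^*$ to the infinitesimal deformation of $\prod_{v|p}\psi_v|_{I_v}$ that it induces, which is exactly the projection of $c|_{D_v}\in H^1(F_v,\CF_v^0)$ into $H^1(I_v,Gr_v^0)^{D_v/I_v}$. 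Assembling over $v\mid p$ and matching the canonical generator of $\bigwedge^d(\fm/\fm^2)^*$ with that of $\bigotimes_{v|p}\bigwedge^{d_v}H^1(I_v,Gr_v^0)^{D_v/I_v}$ via the product decomposition $O_{F,p}^1=\prod_{v|p}O_{F,v}^1$ then delivers the equality of ideals $(\wedge^d\mathrm{res}_p)(z_f^\epsilon)=(\xi_f^\epsilon)$.
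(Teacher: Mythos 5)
Your proposal follows essentially the same route as the paper: apply Theorem~\ref{th:deformation-with-lambda} to $(\bbT^{\ord},\lambda_f,\mathbf M^\epsilon)$, use Proposition~\ref{pr:control} to identify $\eta_{\lambda_0}(\mathbf M^\epsilon_0)$ with $(\xi_f^\epsilon)$, and transport $\wedge^d\iota^*$ through the surjection $R^{\ord}\twoheadrightarrow\bbT^{\ord}$ so as to recognize it as $\wedge^d\mathrm{res}_p$. The paper packages this last identification as a commutative diagram of K\"ahler differentials dual to the exact sequence of Galois cohomology groups with ordinary/Selmer local conditions; your explicit unwinding via the inertial characters on $\CF_v^+$ is the same content.

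One justification is incorrect, although the claim it supports is true. The linear independence of the residue classes of $\bs t$ in $\fp/\fp^2$ does \emph{not} follow from $\iota$ being a morphism in $\acat(d)$: a finite flat map of regular local rings of equal dimension can kill cotangent vectors (e.g.\ $t\mapsto s^2$), so finiteness and flatness of the Hida map alone do not give injectivity of $\fm/\fm^2\to\fp/\fp^2$. What is actually needed, and what the paper invokes at exactly this point, is Hida's theorem that $\bbT^{\ord}$ is unramified over the weight space $\Lambda_F$ at the augmentation $\lambda_f$ arising from the holomorphic cohomological newform $f$. With that substitution your argument is complete.
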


\begin{proof}
Let $R_0$ be the universal deformation ring of $\bar\rho_f$ with fixed determinant equal to $\det\rho_f$ and with ordinary condition of  weight $\kappa$ at places dividing $p$  and  the unramified condition at finite places away from those dividing $\mathfrak n p \infty$, and $R_0 \to \bbT_0$ the canonical surjection. The restriction of the universal deformations to the decomposition subgroups at places dividing $p$ 
gives an homomorphism  $\Lambda_F\to R^{\ord}$ making the map  $R^{\ord} \to \bbT^{\ord}$ an $\Lambda_F$-algebra homomorphim and a canonical 
isomorphism $R^{\ord}\otimes_{\Lambda_F}\mco\cong R^{\ord}_0$. Setting
\[
 \fp_0 \colonequals\Ker (\bbT^{\ord}_0\to \mco)=\fp/\fm  \quad \text{and} \quad \fp_{R,0} \colonequals \Ker  (R_0^{\ord}\to \mco)=\fp_R/\fm
\]
the isomorphism and surjective maps induces the following commutative diagram of K\"ahler differentials (see  \S \ref{Lambda-structure}).
\[
\begin{tikzcd}[column sep=tiny]
    0 \arrow[r] & \fm/\fm^2 \arrow[d,equal] \arrow[r]  & \fp/\fp^2 \arrow[r]  & \fp_{0}/\fp_{0}^2  \arrow[r]  & 0 \\
    0 \arrow[r] & \fm/\fm^2 \arrow[d,equal] \arrow[r]  & \fp_{R}/\fp_{R}^2  \arrow[u]  \arrow[d,equal]\arrow[r]  & \fp_{R,0}/\fp_{R,0}^2  \arrow[u] \arrow[d,equal] \arrow[r]  & 0 \\
      0 \arrow[r] & \bigoplus_{v|p }(H^1(I_v,\frac{E}{\mco})^{\frac{D_v}{I_v}})^\vee\arrow[r]  & H^1_{\full,\ord}(F,\ad\rho_f\otimes \frac{E}{\mco})^\vee\arrow[r]  &  H^1_{\full,\Sel} (F,\ad\rho_f\otimes \frac{E}{\mco})^\vee\arrow[r]  & 0 
\end{tikzcd}
\]
Here  $H^1_{\full,\Sel} (F,\ad\rho_f\otimes \frac{E}{\mco})$ means no condition at primes dividing $\mathfrak n$, and the Selmer condition at places of $F$ above $p$.  The exactness on the left in the top row follows from  Hida’s theorem  that $\bbT^{\ord}$ is unramified over the weight space $\Lambda_F$  at the augmentation $\lambda_f$ arising from the holomorphic cohomological newform $f$. The vertical arrows are surjective and that  the $\mco$-module $\fp_0/\fp_0^2\cong \Omega_{\bbT_0/\mco}\otimes_{\lambda_f}\mco$ is torsion, and therefore  both $\fm/\fm^2$ and  $(\fp/\fp^2)^{t\mathrm f}$ are free of rank $d$ over $\mco$. The diagram above yields the commutative diagram
\[
\begin{tikzcd}
\bigwedge^d ( \fm/\fm^2)^*\arrow[d,equal] &\arrow[l]   \bigwedge^d ( \fp/\fp^2)^*\arrow[d]   \\
 \bigwedge^d(\fm/\fm^2)^* \arrow[d,equal]& \arrow[l] \bigwedge^d( \fp_{R}/\fp_{R}^2 )^*\arrow[d,equal]  \\
 \bigotimes_{v|p }\bigwedge^{d_v}H^1(I_v,{\mco})^{\frac{D_v}{I_v}}  &\arrow[l]   \bigwedge^d H^1_{\full,\ord}(F,\ad\rho_f) 
\end{tikzcd}
\]
Given this diagram, Proposition  \ref{pr:control}, and Theorem  \ref{th:deformation-with-lambda}, it follows that the image of $z_f^\epsilon$ under the local restriction map at $p$ is $\xi_f^\epsilon$,  fixing an isomorphism with $\mco$.
\end{proof}

\begin{remark} Our hypothesis here are less restrictive than in \cite{Urban:2021}. Moreover, the method used here allows us to bypass the use of local complete intersection results  on the corresponding Hecke ring used in \cite{Urban:2021}, and therefore to remove some hypotheses; in particular, it does not require us to have a $R=\bbT$  theorem.  

As explained in  \cite{Urban:2021},  $(z_f^\epsilon)$ is the bottom class of an Euler system of rank $d$. It would be interesting to extend our new method to construct the other classes using higher congruence modules for the base change of $f$ to abelian extensions of $F$.  We have shown that  $(z_f^\epsilon)$  is related to the $L$-value $\xi_f$. If we could extend our method we would be able to construct  the $p$-adic $L$-function  $L^{S_f}_p(\ad\rho_f,s)$.
\end{remark}

\begin{remark} 
By \eqref{eq:Kahler}, cokernel of $(\fp/\fp^2)^* \to (\fm/\fm^2)^*$ has length equal to 
\[
\length_\mco(\Phi_{\lambda_f}(\bbT)) -\length_\mco(\Phi_{\lambda_f}(\bbT^{\ord}))\,.
\]  
This results in ``factorizing’’ classical Selmer groups $\Phi_\lambda(R)$  into a part coming from $\Phi_\lambda(R^{\ord})$ and a part coming from the cokernel of  $(\fp/\fp^2)^* \to (\fm/\fm^2)^*$. 
\end{remark}

\subsection*{The co-torsion in Galois cohomology}
In this section, we assume that $\rho_f$ is a minimal deformation of $\bar\rho_f$ as in \cite[\S4.2]{Dimitrov:2009}. We replace the rings $R^{\ord}$ and $R_0$ by their minimal deformation analogues.  Then it is known that the maps $R_{\mathrm{min}}^{\ord}\to\bbT^{\ord}$ and $R_{0,\mathrm{min}}\to\bbT_0$ are isomorphisms of  complete intersection rings thanks to the works of Fujiwara \cite{Fujiwara} and  Dimitrov \cite{Dimitrov:2009} (note that we can and do replace the big image assumption  of Dimitrov for $\bar\rho_f$ by the much weaker one of \cite{Caraiani/Tamiozzo:2023} of being non solvable). In addition, we make the following hypothesis justified by the Remark \ref{BK=Se}. For all  $v|p$, we assume that the action of $D_v$ on $\mathcal F^+_v$ is distinct from  the cyclotomic character.  By our minimality assumption, the local conditions at places away from $p$ are the finite Bloch-Kato conditions for all the Galois cohomology 
groups considered in this section, so that we can now make the following identifications:

\begin{itemize}
\item  $ \Hom(\fp/\fp^2,\mco)=H^1_{\ord}(F,\ad \rho_f)$
\item $ \Hom(\fm/\fm^2,\mco)=H^1_{\ord/f}(F_p,\ad \rho_f)=\oplus_{v|p}H^1(I_v,\mco)^{\frac{D_v}{I_v}}$
\item $ H^1_{f}(F,\ad \rho_f \otimes_\mco E/\mco)=\fp_0/\fp_0^2\colonequals\Phi_{\lambda_f}$ since $\bbT_0$ is finite over $\mco$,
\item $ {\mathrm{cotors}}(H^1_{\cL^{\ord}}(\bbQ,\ad \rho_f\otimes_\mco E/\mco))=\tors(\fp/\fp^2)=\Phi_{\lambda_f}^{\ord}\colonequals\Phi_{\lambda_f}(\bbT^{\ord})$
\end{itemize}

We abbreviate $\eta_{\lambda_f}(\bbT_0)$ and $\eta_{\lambda_f}(\bbT^{\ord})$ to  $\eta_{\lambda_f}$ and $\eta_{\lambda_f}^{\ord}$, respectively, and view them as ideals of $\mco$.   Since we have assumed that the Hecke rings are complete intersection we have
\[
 \eta_{\lambda_f}= \mathrm{Fitt}_\mco  (\Phi_{\lambda_f})\;
\subset\;
  \eta_{\lambda_f}^{\ord}= \mathrm{Fitt}_\mco  (\Phi_{\lambda_f}^{\ord}) 
\]
Here is an interpretation of the invariants $\Phi^{\ord}_{\lambda_f}$ and $\Psi^{\ord}_{\lambda_f}$.

\begin{proposition}
\label{pr:duality} 
With the minimality  assumptions as  above  we have
\begin{enumerate}[\quad\rm(1)]
\item the isomorphisms:
\[
\Phi_{\lambda_f}^{\ord}\cong {\mathrm{cotors}}(H^1_{{\ord}}(F,\ad \rho_f \otimes_\mco E/\mco))
\] 
and an equality $\length_\mco( \Phi_{\lambda_f}^{\ord})=\length_\mco(H^1_{ {\ord}^\perp}  (F,{\ad \rho_f} \otimes_\mco E/\mco(1)))$.
\item an isomorphism:
\[
\Psi_{\lambda_f}^{\ord}\cong \frac{ \bigwedge^d H^1_{\ord}(F,\ad\rho_f) }{ (z_f ^\epsilon) }\,.
\]
\end{enumerate}
\end{proposition}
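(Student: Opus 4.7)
My plan is to handle the two parts separately, using throughout the minimality hypothesis together with the Fujiwara-Dimitrov isomorphism $R^{\ord}\cong \bbT^{\ord}$ to identify $\fp_R/\fp_R^2$ with $\fp/\fp^2$ and to collapse $H^1_{\full,\ord}$ to $H^1_\ord$.

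For part (1), I would start from the standard cotangent-Selmer identification
\[
\fp/\fp^2 \cong H^1_\ord(F,\ad\rho_f\otimes_\mco E/\mco)^\vee\,.
\]
Writing $\fp/\fp^2\cong \mco^r\oplus T$ with $T$ finite, Pontryagin duality gives $H^1_\ord(F,\ad\rho_f\otimes E/\mco)\cong (E/\mco)^r\oplus T^\vee$, whose divisible part is the $(E/\mco)^r$ summand, so that $\mathrm{cotors}(H^1_\ord(F,\ad\rho_f\otimes E/\mco))=T^\vee$, which is (non-canonically) isomorphic to $\Phi^{\ord}_{\lambda_f}=T$ and has the same $\mco$-length. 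For the length equality with $H^1_{\ord^\perp}$, I would apply the Greenberg-Wiles formula of Lemma~\ref{lem:Greenberg-Wiles} to $A=\ad\rho_f\otimes \mco/\varpi^n$, pass to the limit, and exploit self-duality of $\ad\rho_f$ (trace pairing with a Tate twist) together with the vanishing $H^0(F,A)=H^0(F,A(1))=0$ from non-solvable residual image and the $v$-distinguished/non-cyclotomic assumptions on $\CF^+_v$, so that the local factors at $v\mid p$ match the conversion from $H^1_\ord$-length to $H^1_{\ord^\perp}$-length.

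For part (2), my plan is to use the pairing description of the congruence ideal from Section~\ref{sec:pairing}. Since $\rank_{\lambda_f}(\bbT^{\ord})=1$, the congruence module identifies as
\[
\Psi^{\ord}_{\lambda_f}\cong \ecoh^d_{\lambda_f}(\mco)/\eta_{\lambda_f}(\bbT^{\ord})\,,
\]
and the structure isomorphism \eqref{eq:ecoh-top} combined with part (1) yields $\ecoh^d_{\lambda_f}(\mco)\cong \bigwedge^d_\mco H^1_\ord(F,\ad\rho_f)$. It then suffices to prove $(z_f^\epsilon)=\eta_{\lambda_f}(\bbT^{\ord})$ in this exterior power. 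By construction $(z_f^\epsilon)$ is the image of $\eta_{\lambda_f}(\mathbf M^\epsilon)$ under the identification $(\fp/\fp^2)^*\cong H^1_\ord(F,\ad\rho_f)$ coming from $R^{\ord}=\bbT^{\ord}$, so I would invoke that in the minimal case $\mathbf M^\epsilon$ is free of rank one over $\bbT^{\ord}$ (a consequence of the same Taylor-Wiles-Fujiwara-Dimitrov patching that supplies complete intersection), giving $\eta_{\lambda_f}(\mathbf M^\epsilon)=\eta_{\lambda_f}(\bbT^{\ord})$.

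The main obstacle I anticipate is the length equality in part (1): one must track local Euler factors in Greenberg-Wiles carefully to verify that the contributions $\#\ord_v/\#H^0(F_v,A)$ at $v\mid p$, combined with the vanishing of global $H^0$'s, exactly convert $\length_\mco H^1_\ord$ into $\length_\mco H^1_{\ord^\perp}$ upon passing to the limit. A secondary subtlety in part (2) is the appeal to freeness of $\mathbf M^\epsilon$ over $\bbT^{\ord}$; this is essentially the only place where minimality is used beyond giving $R^{\ord}=\bbT^{\ord}$.
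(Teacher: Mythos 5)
Your proof of the first isomorphism in (1) is fine and is the same as the paper's (it is just the cotangent--Selmer identification $\fp/\fp^2\cong H^1_{\ord}(F,\ad\rho_f\otimes E/\mco)^\vee$ collapsed via $R^{\ord}=\bbT^{\ord}$). The genuine gap is the length equality in (1), which is precisely the step you defer as ``the main obstacle'': you propose applying Lemma~\ref{lem:Greenberg-Wiles} directly to the ordinary conditions, but this is not a routine verification. The group $H^1_{\ord}(F,\ad\rho_f\otimes E/\mco)$ has corank $d$, so at level $\varpi^n$ the Greenberg--Wiles ratio grows like $q^{nd}$; one must show that the local terms $\#\cL_v/\#H^0(F_v,A)$ at $v\mid p$ and $v\mid\infty$ contribute exactly this factor, and then separately convert finite-level cardinalities into the length of the cotorsion of the direct limit. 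Moreover $H^0(F,\ad\rho_f\otimes\mco/\varpi^n)$ does not vanish for the full adjoint (scalar matrices are invariant), so the vanishing you invoke needs to be reformulated. The paper sidesteps all of this: it compares the purely algebraic exact sequence \eqref{eq:Kahler} (coming from the $\Lambda_F$-algebra structure of $\bbT^{\ord}$ and the unramifiedness of $\lambda_f$ over weight space) with the Poitou--Tate sequence of Proposition~\ref{prop:SelmerChange}, deducing $\Phi_{\lambda_f}/\Phi^{\ord}_{\lambda_f}\cong H^1_f(F,\ad\rho_f\otimes E/\mco(1))/H^1_{\ord^\perp}(F,\ad\rho_f\otimes E/\mco(1))$, and then only needs the \emph{balanced} (finite, Bloch--Kato) case $\length_\mco\Phi_{\lambda_f}=\length_\mco H^1_f(F,\ad\rho_f\otimes E/\mco(1))$, where no coranks intervene. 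Either carry out your local computation in full, or adopt this comparison of the two exact sequences; as written, the central claim of (1) is unproved.

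Your part (2) is a correct alternative to the paper's argument. You identify $\Psi^{\ord}_{\lambda_f}\cong\bigwedge^d(\fp/\fp^2)^*/\eta_{\lambda_f}(\bbT^{\ord})$ via the pairing of \S\ref{sec:pairing} and reduce to $(z_f^\epsilon)=\eta_{\lambda_f}(\bbT^{\ord})$, which you get from freeness of $\mathbf M^\epsilon$ over $\bbT^{\ord}$. The paper instead applies Theorem~\ref{th:deformation-with-lambda} to produce the exact sequence relating $\bigwedge^d(\fp/\fp^2)^*$, $\bigwedge^d(\fm/\fm^2)^*\cong\mco$ and $\Psi_{\lambda_f}/\Psi^{\ord}_{\lambda_f}$, quotients by $(z_f^\epsilon)$, and uses Theorem~\ref{th:zeta} to identify the middle term with $\mco/(\xi_f^\epsilon)=\mco/\eta_{\lambda_f}\cong\Psi_{\lambda_f}$. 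Note that both routes rest on the same multiplicity-one input in disguise: you need $\eta_{\lambda_f}(\mathbf M^\epsilon)=\eta_{\lambda_f}(\bbT^{\ord})$, the paper needs $(\xi_f^\epsilon)=\eta_{\lambda_f}(\mathbf M_0^\epsilon)=\eta_{\lambda_f}(\bbT_0)$, and these are equivalent by the first assertion of Theorem~\ref{th:deformation-with-lambda}. Your version is shorter in the minimal setting; the paper's version keeps the relation to $\xi_f^\epsilon$ and to $\bigwedge^d\mathrm{res}_p$ explicit, which is what makes the corollary after the proposition immediate.
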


\begin{proof}
From \eqref{eq:Kahler} we get the exact sequence
\[
0 \longrightarrow (\fp/\fp^2)^*\longrightarrow (\fm/\fm^2)^*
	\longrightarrow  \left( \frac{\Phi_{\lambda_f}}{ \Phi^{\ord}_{\lambda_f}}\right)^\vee \longrightarrow 0\,.
\]
Comparing with the following Poitou-Tate duality exact sequence
\[ 
0 \to H^1_{{\ord}}  (F, \ad \rho_f) \to  H^1_{\ord /f}(F_p,\ad\rho_f)
\to \left(\frac{ H^1_f(F,{\ad \rho_f} \otimes_\mco E/\mco(1))}   { H^1_{ {\ord}^\perp}  (F,{\ad \rho_f} \otimes_\mco E/\mco(1))}\right)^\vee \to 0
\] 
from Proposition  \ref{prop:SelmerChange} and the identifications recalled above, we get
\[
 \frac{\Phi_{\lambda_f}}{ \Phi^{\ord}_{\lambda_f}}  \cong
\frac{ H^1_f(F,{\ad \rho_f} \otimes_\mco E/\mco(1))}   { H^1_{ {\ord}^\perp}  (F,{\ad \rho_f} \otimes_\mco E/\mco(1))}
\]
On the other hand, we have the canonical isomorphisms 
\[
\Phi_{\lambda_f}\cong H^1_f(F,\ad\rho_f\otimes E/\mco)^\vee\,.
\]  
It remains to note that by the balanced properties of Bloch-Kato Selmer groups and Lemma \ref{lem:Greenberg-Wiles}, the $\mco$-module above has finite length, equal to
\[ 
\length_\mco(H^1_f(F,\ad\rho_f\otimes E/\mco(1)))\,.
\] 

 (2): From Theorem~\ref{th:deformation-with-lambda} one gets an exact sequence
\[
0 \longrightarrow \bigwedge^d (\fp/\fp^2)^* \longrightarrow  \bigwedge^d(\fm/\fm^2)^* \longrightarrow   
	\frac{\Psi_{\lambda_f}}{ \Psi^{\ord}_{\lambda_f}} \longrightarrow  0. 
\]
and therefore
\begin{equation*}\label{defoPT2}
0 \longrightarrow    \frac  {\bigwedge^d (\fp/\fp^2)^*}{( z_f^\epsilon)} \longrightarrow   
		\frac{ \bigwedge^d(\fm/\fm^2)^*}{(\bigwedge^d \mathrm{res}_p)(z_f^\epsilon)}\longrightarrow   
			\frac{\Psi_{\lambda_f}}{ \Psi^{\ord}_{\lambda_f}} \longrightarrow  0\,. 
\end{equation*}
The desired isomorphism follows since after identification of $ \wedge^d(\fm/\fm^2)^*$ with $\mco$,  we have $ \bigwedge^d \mathrm{res}_p(z_f^\epsilon)=(\xi_f^\epsilon)$ which is the same as  the ideal $\eta_{\lambda_f}$ by construction.
\end{proof}

The next result is immediate from Proposition~\ref{pr:duality} and Theorem~\ref{th:regular}.

 \begin{corollary}
 With our previous assumptions, the following are equivalent
 	\begin{enumerate}[\quad\rm(1)]
 		\item  $\bbT^{\ord}$ is regular
 		\item  $H^1_{\ord^\perp}(F,\ad\bar\rho_f(1))=0$
 		\item $(z_f^\epsilon) = \bigwedge^d H^1_{\ord}(F,\ad\rho_f)$. \qed
 	\end{enumerate}
 \end{corollary}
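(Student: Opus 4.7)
The plan is to deduce the corollary as an immediate consequence of Theorem~\ref{th:regular} applied to the pair $(\bbT^{\ord},\lambda_f)\in\acat$, with Proposition~\ref{pr:duality} used to translate the two commutative-algebraic vanishing conditions on $\Phi^{\ord}_{\lambda_f}$ and $\Psi^{\ord}_{\lambda_f}$ into the Galois-cohomological statements (2) and (3). The skeleton is: Theorem~\ref{th:regular} provides the equivalences
\[
\bbT^{\ord}\text{ is regular}\ \Longleftrightarrow\ \Phi^{\ord}_{\lambda_f}=0\ \Longleftrightarrow\ \Psi^{\ord}_{\lambda_f}=0,
\]
and the two parts of Proposition~\ref{pr:duality} then convert each right-hand side into a Galois-theoretic condition.

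For (1)$\Leftrightarrow$(3), I would invoke Proposition~\ref{pr:duality}(2), which supplies the isomorphism $\Psi^{\ord}_{\lambda_f}\cong \bigwedge^d H^1_{\ord}(F,\ad\rho_f)/(z_f^\epsilon)$. This quotient vanishes precisely when $(z_f^\epsilon)$ exhausts $\bigwedge^d H^1_{\ord}(F,\ad\rho_f)$, yielding (3) directly.

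For (1)$\Leftrightarrow$(2), Proposition~\ref{pr:duality}(1) furnishes the length equality $\length_\mco(\Phi^{\ord}_{\lambda_f})=\length_\mco H^1_{\ord^\perp}(F,\ad\rho_f\otimes_\mco E/\mco(1))$, so $\Phi^{\ord}_{\lambda_f}=0$ if and only if the divisible Selmer group $H^1_{\ord^\perp}(F,\ad\rho_f\otimes E/\mco(1))$ vanishes. The remaining step is to pass from divisible to residual coefficients, that is, to identify vanishing of $H^1_{\ord^\perp}(F,\ad\rho_f\otimes E/\mco(1))$ with vanishing of $H^1_{\ord^\perp}(F,\ad\bar\rho_f(1))$. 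This I would handle by the standard Nakayama-type argument: the short exact sequence
\[
0\longrightarrow \ad\bar\rho_f(1)\longrightarrow \ad\rho_f\otimes_\mco E/\mco(1)\xrightarrow{\ \varpi\ }\ad\rho_f\otimes_\mco E/\mco(1)\longrightarrow 0
\]
induces a long exact sequence in Galois cohomology, and one checks that the local conditions defining $\ord^\perp$ are compatible with this $\varpi$-multiplication (the local pieces $H^1(F_v,\CF^0_v\otimes(-))$ have the same behaviour). Hence the $\varpi$-torsion of the divisible Selmer group is precisely the residual Selmer group, and a cofinitely generated $\mco$-module vanishes iff its $\varpi$-torsion vanishes.

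The main technical point is the last compatibility check for the local Selmer conditions under the coefficient short exact sequence; this is routine given the definitions recalled before Remark~\ref{BK=Se}, but it is the only step that is not purely formal from the results already established. Once it is in place, chaining the three equivalences yields the corollary.
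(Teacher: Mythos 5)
Your proposal is correct and follows exactly the route the paper intends: the corollary is stated there as an immediate consequence of Theorem~\ref{th:regular} (regularity $\Leftrightarrow$ $\Phi^{\ord}_{\lambda_f}=0$ $\Leftrightarrow$ $\Psi^{\ord}_{\lambda_f}=0$) combined with the two parts of Proposition~\ref{pr:duality}. The only detail you add beyond what the paper records is the passage from divisible to residual coefficients in condition (2), which you handle by the standard argument and which is indeed the right way to fill that (implicit) gap.
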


\begin{remark} This corollary is an analog of  the well-known fact  that  the   $p$-part of the class group 
of $\bbQ(\zeta_p)^+$ being trivial  (Vandiver’s conjecture is that this should always be the case) is equivalent to  the group of cyclotomic units  having  index prime to $p$ inside the global units of $\bbQ(\zeta_p)$.
\end{remark}

We end  this section with the the  following  proposition whose proof we owe  to Gebhard Boeckle.  It should be compared to the statement in Proposition~\ref{pr:duality} for Selmer groups arising from  motives associated to adjoints of modular forms. Recall that the Leopoldt  conjecture state that if $F$ is a number field and  $F^{{\rm ab}, p}/F$ is the maximal abelian $p$ extension of $F$ unramified outside the primes above $p$ and $\infty$, then the $\bbZ_p$-rank of $ \Gal(F^{{\rm ab}, p}/F)$ is $r_2+1$. 

 \begin{proposition}
      Let $F$ be a number field and assume the Leopoldt Conjecture. One has an exact sequence 
      \[ 
      0 \to \frac{\prod_{v \in S_p} \mu_{p^\infty}(F_v)} {\mu_{p^\infty}(F)} \to A \to B \to 0\,,
      \]
      where $A\colonequals\tors(\Gal(F^{{\rm ab}, p}/F))$ and   $B\colonequals H^1_{S_p-\rm split}(F_{S_p}/F,\bbQ_p/\bbZ_p(1))$.
 \end{proposition}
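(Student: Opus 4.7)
My plan is to derive the sequence from the Poitou--Tate nine-term exact sequence (\cite[Theorem~8.6.10]{Neukirch/Schmidt/Wingberg:2008}) applied to the compact--discrete pair $(\bbZ_p,\mu_{p^\infty})$ of Galois modules for $G_{F,S}$ with $S\colonequals S_p\cup S_\infty$, coupled with Leopoldt to identify the middle term. The opening five terms of this sequence read
\[
0 \to \mu_{p^\infty}(F) \to \prod_{v \in S} \mu_{p^\infty}(F_v) \to H^2(G_{F,S}, \bbZ_p)^\vee \to H^1(G_{F,S}, \mu_{p^\infty}) \to \prod_{v \in S} H^1(G_v, \mu_{p^\infty}).
\]
For odd $p$ the archimedean factors vanish from the second term, leaving $\prod_{v \in S_p}\mu_{p^\infty}(F_v)$, and $B$ is by definition the kernel of the rightmost arrow. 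Splicing out the evident four-term subsequence yields
\[
0 \to \frac{\prod_{v \in S_p} \mu_{p^\infty}(F_v)}{\mu_{p^\infty}(F)} \to H^2(G_{F,S}, \bbZ_p)^\vee \to B \to 0.
\]

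Next I would identify $H^2(G_{F,S},\bbZ_p)^\vee$ with $A$ using Leopoldt. Taking the long exact sequence of cohomology associated with $0\to\bbZ_p\to\bbQ_p\to\bbQ_p/\bbZ_p\to 0$ (trivial action), the relevant piece is
\[
H^1(G_{F,S}, \bbQ_p) \to H^1(G_{F,S}, \bbQ_p/\bbZ_p) \to H^2(G_{F,S}, \bbZ_p) \to H^2(G_{F,S}, \bbQ_p).
\]
Under Leopoldt the group $H^1(G_{F,S},\bbZ_p)=\Hom_{\mathrm{cts}}(\Gal(F^{ab,p}/F),\bbZ_p)$ is free of $\bbZ_p$-rank $r_2+1$, so $\bbQ_p^{r_2+1}$ maps onto the maximal divisible subgroup $(\bbQ_p/\bbZ_p)^{r_2+1}$ of the target $\Hom_{\mathrm{cts}}(\Gamma,\bbQ_p/\bbZ_p)\cong (\bbQ_p/\bbZ_p)^{r_2+1}\oplus A$ (with $\Gamma\colonequals\Gal(F^{ab,p}/F)\cong\bbZ_p^{r_2+1}\oplus A$), with cokernel $A$. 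A Tate global Euler characteristic calculation combined with Leopoldt forces $H^2(G_{F,S},\bbZ_p)$ to be $\bbZ_p$-torsion, so $H^2(G_{F,S},\bbQ_p)=0$ and the connecting map provides an isomorphism $A\xrightarrow{\sim} H^2(G_{F,S},\bbZ_p)$. Pontryagin dualizing this finite abelian $p$-group yields $H^2(G_{F,S},\bbZ_p)^\vee\cong A$, which when substituted into the spliced sequence above gives the desired exact sequence.

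The main obstacle is the finiteness of $H^2(G_{F,S},\bbZ_p)$, which is precisely where Leopoldt is essential; this follows from Tate's global Euler characteristic formula combined with the fact that Leopoldt's conjecture is equivalent to the vanishing of the $\bbZ_p$-rank of $H^2(G_{F,S},\bbZ_p)$. The remaining steps are formal manipulations with the Poitou--Tate nine-term sequence and the Bockstein long exact sequence.
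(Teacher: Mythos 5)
Your argument is correct and rests on the same two ingredients as the paper's proof: the Bockstein identification of $H^2(G_{F,S},\bbZ_p)$ with the Pontryagin dual of $A$ (valid because Leopoldt forces $H^2(G_{F,S},\bbQ_p)=0$, so $H^2(G_{F,S},\bbZ_p)$ is finite), together with Poitou--Tate duality. The only difference is packaging: you read the sequence off the first five terms of the nine-term sequence for the discrete module $\mu_{p^\infty}$, whereas the paper assembles the dual statement from $\Sha^1(\mu_{p^\infty})\cong\Sha^2(\bbZ_p)^\vee$ and the tail of the sequence for $\bbZ_p$ plus local duality --- these are dual readings of the same theorem, so the proofs are essentially identical.
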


\begin{proof}
Note that $A^*\colonequals{\rm cotor}(H^1(F_{S_p}/F,\bbQ_p/\bbZ_p))$.
Consider
\[
H^1(G_{F,S_p},\bbQ_p/\bbZ_p) = \Hom(G_{F,S_p}^{ab},\bbQ_p/\bbZ_p)=(G_{F,S_p}^{ab})^*=(\bbQ_p/\bbZ_p)^{r_2+1} \times  A^*\,.
\] 
By \cite[2.7.11]{Neukirch/Schmidt/Wingberg:2008}  the kernel of the map 
\[
H^1(G_{F,S_p},\bbQ_p/\bbZ_p) \to H^2(G_{F,S_p},\bbZ_p) 
\] 
is the divisible part of $H^1(G_{F,S_p},\bbQ_p/\bbZ_p)$, that is to say, $(\bbQ_p/\bbZ_p)^{r_2+1}$, and its image is the torsion subgroup of  $H^2(G_{F,S_p},\bbZ_p)$,  that is to say, all of $H^2(G_{F,S_p},\bbZ_p)$, under the Leopoldt conjecture. 

Further note that by Poitou-Tate one gets the isomorphism
\[
B=\Sha^1(G_{F,S_p},\bbQ_p/\bbZ_p(1))\cong \Sha^2(G_{F,S_p},\bbZ_p)^*\,.
\]
So there is a map $B^*  \to A^* $ that is the inclusion $\Sha^2(G_{F,S_p},\bbZ_p) \to H^2(G_{F,S_p},Z_p)$.
The cokernel is isomorphic to
the dual of the cokernel of 
$\mu_{p^\infty}(F)  \to \prod_{v \in S_p} \mu_{p^\infty}(F_v)$,  by   Poitout-Tate and local Tate duality.
\end{proof}

\section*{Acknowledgements}
This work is partly supported by National Science Foundation grants DMS-200985 (SBI) and DMS-2200390 (CBK), and  by a Simons Fellowship (CBK). The second and fourth authors thank TIFR, and the third author received funding from the European Research Council (ERC) under the European Union's Horizon 2020 research and innovation programme (grant agreement No. 884596). The paper has its origins in conversations between the second and fourth author in New York and Mumbai, during visits to  Columbia University  and  Tata Institute,  in which we tried to understand  more concretely the  results of  \cite{Iyengar/Khare/Manning:2022a} about congruence modules in  the codimension 1 case.  The second author would like to thank Gebhard Boeckle and Chris Skinner for helpful discussions.

\bibliographystyle{amsplain}
\begin{bibdiv}
\begin{biblist}

\bib{Avramov/Iyengar:2013}{article}{
      author={Avramov, Luchezar~L.},
      author={Iyengar, Srikanth~B.},
       title={Bass numbers over local rings via stable cohomology},
        date={2013},
        ISSN={1939-0807},
     journal={J. Commut. Algebra},
      volume={5},
      number={1},
       pages={5\ndash 15},
         url={https://doi.org/10.1216/jca-2013-5-1-5},
      review={\MR{3084119}},
}
	
\bib{Bruns/Herzog:1998}{book}{
      author={Bruns, Winfried},
      author={Herzog, J{\"u}rgen},
       title={Cohen-Macaulay rings},
     edition={2},
      series={Cambridge Studies in Advanced Mathematics},
   publisher={Cambridge University Press},
        date={1998},
}

\bib{Caraiani/Tamiozzo:2023}{article}    {
	author={A. Caraiani and M. Tamiozzo}
	title={On the \' etale cohomology of Hilbert modular varieties with torsion coefficients}
	journal={Compositio Mathematica} 
	Year={2023},
	Volume={159} , 
	number={11},
	pages={2279 \ndash 2325}
}

\bib{Diamond:1997}{article}{
      author={Diamond, Fred},
       title={The {T}aylor-{W}iles construction and multiplicity one},
        date={1997},
        ISSN={0020-9910},
     journal={Invent. Math.},
      volume={128},
      number={2},
       pages={379\ndash 391},
         url={http://dx.doi.org/10.1007/s002220050144},
      review={\MR{1440309}},
}

\bib{Dimitrov:2009}{article}{
	author = {Mladen Dimitrov},
	journal = {Compositio Math.},
	pages = {1114-1146},
	title = { On Ihara's lemma for Hilbert Modular Varieties},
	number={5}
	Volume = {145},
	Year = {2009}}

\bib{Fujiwara}{article}{
	author = {Kazuhiro Fujiwara},
	journal = {  Proceedings of
		the International Congress of Mathematicians Madrid 2006},
	pages = {347-371},
	title = { Galois deformations and arithmetic geometry of Shimura varieties},
	Volume = {2},
	Year = {2006}}

\bib{Hida:1981}{article}{
    AUTHOR = {Hida, Haruzo},
     TITLE = {Congruence of cusp forms and special values of their zeta
              functions},
   JOURNAL = {Invent. Math.},
  FJOURNAL = {Inventiones Mathematicae},
    VOLUME = {63},
      YEAR = {1981},
    NUMBER = {2},
     PAGES = {225--261},
      ISSN = {0020-9910,1432-1297},
   MRCLASS = {10D12},
  MRNUMBER = {610538},
MRREVIEWER = {O.\ M.\ Fomenko},
       DOI = {10.1007/BF01393877},
       URL = {https://doi.org/10.1007/BF01393877},
}

\bib{Iyengar/Khare/Manning:2022a}{article}{
      author={{Iyengar}, Srikanth~B.},
      author={{Khare}, Chandrashekhar~B.},
      author={{Manning}, Jeffrey},
       title={{Congruence modules and the Wiles-Lenstra-Diamond numerical criterion in higher codimensions}},
        date={2022-11},
     journal={arXiv e-prints},
       pages={arXiv:2206.08212v2},
      eprint={https://arxiv.org/abs/2206.08212},
}

\bib{Kato:2004}{article}{
 AUTHOR = {Kato, Kazuya},
     TITLE = {{$p$}-adic {H}odge theory and values of zeta functions of
              modular forms},
      NOTE = {Cohomologies $p$-adiques et applications arithm\'{e}tiques.
              III},
   JOURNAL = {Ast\'{e}risque},
  FJOURNAL = {Ast\'{e}risque},
    NUMBER = {295},
      YEAR = {2004},
     PAGES = {ix, 117--290},
      ISSN = {0303-1179,2492-5926},
   MRCLASS = {11F85 (11F67 11G40 11R33 11S80 14G10 14G35)},
  MRNUMBER = {2104361},
MRREVIEWER = {Fabrizio\ Andreatta},
}

\bib{Lescot:1983}{incollection}{
      author={Lescot, Jack},
       title={La s\'{e}rie de {B}ass d'un produit fibr\'{e} d'anneaux locaux},
        date={1983},
   booktitle={Paul {D}ubreil and {M}arie-{P}aule {M}alliavin algebra seminar,
  35th year ({P}aris, 1982)},
      series={Lecture Notes in Math.},
      volume={1029},
   publisher={Springer, Berlin},
       pages={218\ndash 239},
         url={https://doi.org/10.1007/BFb0098933},
      review={\MR{732477}},
}
    
\bib{Neukirch/Schmidt/Wingberg:2008}{book}{
    AUTHOR = {Neukirch, J\"{u}rgen}
    author={Schmidt, Alexander} 
    author={Wingberg, Kay},
     TITLE = {Cohomology of number fields},
    SERIES = {Grundlehren der mathematischen Wissenschaften [Fundamental
              Principles of Mathematical Sciences]},
    VOLUME = {323},
   EDITION = {Second},
 PUBLISHER = {Springer-Verlag, Berlin},
      YEAR = {2008},
     PAGES = {xvi+825},
      ISBN = {978-3-540-37888-4},
   MRCLASS = {11R34 (11-02 11G45 11R23 11S20 11S25 11S31 12G05)},
  MRNUMBER = {2392026},
       DOI = {10.1007/978-3-540-37889-1},
       URL = {https://doi.org/10.1007/978-3-540-37889-1},
}
\bib{Ribet:1983}{article}{
    AUTHOR = {Ribet, Kenneth A.},
     TITLE = {Mod {$p$} {H}ecke operators and congruences between modular
              forms},
   JOURNAL = {Invent. Math.},
  FJOURNAL = {Inventiones Mathematicae},
    VOLUME = {71},
      YEAR = {1983},
    NUMBER = {1},
     PAGES = {193--205},
      ISSN = {0020-9910,1432-1297},
   MRCLASS = {10D23 (10D12 10D45)},
  MRNUMBER = {688264},
MRREVIEWER = {G.\ van der Geer},
       DOI = {10.1007/BF01393341},
       URL = {https://doi.org/10.1007/BF01393341},
}

\bib{stacks-project}{misc}{
      author={{Stacks Project Authors}, The},
       title={{Stacks Project}},
         how={\url{http://stacks.math.columbia.edu}},
        date={2019},
}

\bib{Urban:2021}{article}{
    AUTHOR = {Urban, Eric},
     TITLE = {On {E}uler systems for adjoint {H}ilbert modular {G}alois
              representations},
   JOURNAL = {J. Th\'{e}or. Nombres Bordeaux},
  FJOURNAL = {Journal de Th\'{e}orie des Nombres de Bordeaux},
    VOLUME = {33},
      YEAR = {2021},
    NUMBER = {3},
     PAGES = {1115--1141},
      ISSN = {1246-7405,2118-8572},
   MRCLASS = {11F80 (11F33 11F41)},
  MRNUMBER = {4402393},
MRREVIEWER = {Ariel\ M.\ Pacetti},
       DOI = {10.1007/s00165-021-00557-0},
       URL = {https://doi.org/10.1007/s00165-021-00557-0},
}

\bib{Wiles:1995}{article}{
      author={Wiles, Andrew},
       title={Modular elliptic curves and {F}ermat's Last Theorem},
        date={1995},
        ISSN={0003-486X},
     journal={Ann. of Math. (2)},
      volume={141},
      number={3},
       pages={443\ndash 551},
         url={https://doi.org/10.2307/2118559},
      review={\MR{1333035}},
}

\end{biblist}
\end{bibdiv}
\end{document}